\numberwithin{equation}{section}
\providecommand{\eprint}[2][]{\href{http://arxiv.org/abs/#2}{arXiv:#2}}
\begin{document}

\newtheorem{theorem}{Theorem}[section]
\newtheorem{thm}[theorem]{Theorem}
\newtheorem{lemma}[theorem]{Lemma}
\newtheorem{proposition}[theorem]{Proposition}
\newtheorem{corollary}[theorem]{Corollary}
\newtheorem{theorem*}{Theorem}

\theoremstyle{definition}
\newtheorem{definition}[theorem]{Definition}
\newtheorem{example}[theorem]{Example}

\theoremstyle{remark}
\newtheorem{remark}[theorem]{Remark}

\newenvironment{magarray}[1]
{\renewcommand\arraystretch{#1}}
{\renewcommand\arraystretch{1}}

\newcommand{\quot}[2]{
{\lower-.2ex \hbox{$#1$}}{\kern -0.2ex /}
{\kern -0.5ex \lower.6ex\hbox{$#2$}}}

\newcommand{\mapor}[1]{\smash{\mathop{\longrightarrow}\limits^{#1}}}
\newcommand{\mapin}[1]{\smash{\mathop{\hookrightarrow}\limits^{#1}}}
\newcommand{\mapver}[1]{\Big\downarrow
\rlap{$\vcenter{\hbox{$\scriptstyle#1$}}$}}
\newcommand{\liminv}{\smash{\mathop{\lim}\limits_{\leftarrow}\,}}

\newcommand{\Set}{\mathbf{Set}}
\newcommand{\Art}{\mathbf{Art}}
\newcommand{\solose}{\Rightarrow}

\newcommand{\specif}[2]{\left\{#1\,\left|\, #2\right. \,\right\}}

\renewcommand{\bar}{\overline}
\newcommand{\de}{\partial}
\newcommand{\debar}{{\overline{\partial}}}
\newcommand{\per}{\!\cdot\!}
\newcommand{\Oh}{\mathcal{O}}
\newcommand{\sA}{\mathcal{A}}
\newcommand{\sB}{\mathcal{B}}
\newcommand{\sC}{\mathcal{C}}
\newcommand{\sD}{\mathcal{D}}
\newcommand{\sE}{\mathcal{E}}
\newcommand{\sF}{\mathcal{F}}
\newcommand{\sG}{\mathcal{G}}
\newcommand{\sH}{\mathcal{H}}
\newcommand{\sI}{\mathcal{I}}
\newcommand{\sL}{\mathcal{L}}
\newcommand{\sM}{\mathcal{M}}
\newcommand{\sP}{\mathcal{P}}
\newcommand{\sU}{\mathcal{U}}
\newcommand{\sV}{\mathcal{V}}
\newcommand{\sX}{\mathcal{X}}
\newcommand{\sY}{\mathcal{Y}}
\newcommand{\sN}{\mathcal{N}}
\newcommand{\sT}{\mathcal{T}}
\newcommand{\Aut}{\operatorname{Aut}}
\newcommand{\Id}{\operatorname{Id}}
\newcommand{\Tr}{\operatorname{Tr}}
\newcommand{\Mor}{\operatorname{Mor}}
\newcommand{\Def}{\operatorname{Def}}
\newcommand{\Fitt}{\operatorname{Fitt}}
\newcommand{\Supp}{\operatorname{Supp}}
\newcommand{\Hom}{\operatorname{Hom}}
\newcommand{\Hilb}{\operatorname{Hilb}}
\newcommand{\HOM}{\operatorname{\mathcal H}\!\!om}
\newcommand{\EXT}{\operatorname{\mathcal E}\!\!xt}
\newcommand{\DER}{\operatorname{\mathcal D}\!er}
\newcommand{\Spec}{\operatorname{Spec}}
\newcommand{\Der}{\operatorname{Der}}
\newcommand{\Tor}{{\operatorname{Tor}}}
\newcommand{\Ext}{{\operatorname{Ext}}}
\newcommand{\End}{{\operatorname{End}}}
\newcommand{\END}{\operatorname{\mathcal E}\!\!nd}
\newcommand{\Image}{\operatorname{Im}}
\newcommand{\coker}{\operatorname{coker}}
\newcommand{\tot}{\operatorname{tot}}
\newcommand{\ten}{\bigotimes}
\newcommand{\mA}{\mathfrak{m}_{A}}

\newcommand{\somdir}[2]{\hbox{$\mathrel
{\smash{\mathop{\mathop \bigoplus\limits_{#1}}
\limits^{#2}}}$}}
\newcommand{\tensor}[2]{\hbox{$\mathrel
{\smash{\mathop{\mathop \bigotimes\limits_{#1}}
^{#2}}}$}}
\newcommand{\symm}[2]{\hbox{$\mathrel
{\smash{\mathop{\mathop \bigodot\limits_{#1}}
^{#2}}}$}}
\newcommand{\external}[2]{\hbox{$\mathrel
{\smash{\mathop{\mathop \bigwedge\limits_{#1}}
^{\!#2}}}$}}

\renewcommand{\Hat}[1]{\widehat{#1}}
\newcommand{\dual}{^{\vee}}
\newcommand{\desude}[2]{\dfrac{\de #1}{\de #2}}

\newcommand{\A}{\mathbb{A}}
\newcommand{\N}{\mathbb{N}}
\newcommand{\R}{\mathbb{R}}
\newcommand{\Z}{\mathbb{Z}}
\renewcommand{\H}{\mathbb{H}}
\renewcommand{\L}{\mathbb{L}}
\newcommand{\proj}{\mathbb{P}}
\newcommand{\K}{\mathbb{K}\,}
\newcommand\C{\mathbb{C}}
\newcommand\Del{\operatorname{Del}}
\newcommand\Tot{\operatorname{Tot}}
\newcommand\Grpd{\mbox{\bf Grpd}}

\newcommand{\g}{\mathfrak{g}}

\newcommand\é{\'e}
\newcommand\è{\`e}
\newcommand\à{\`a}
\newcommand\ì{\`i}
\newcommand\ù{\`u}
\newcommand\ò{\`o }


\newcommand{\rh}{\rightarrow}
\newcommand{\contr}{{\mspace{1mu}\lrcorner\mspace{1.5mu}}}

\newcommand{\bi}{\boldsymbol{i}}
\newcommand{\bl}{\boldsymbol{l}}

\newcommand{\MC}{\operatorname{MC}}
\newcommand{\TW}{\operatorname{TW}}

\newenvironment{acknowledgement}{\par\addvspace{17pt}\small\rm
\trivlist\item[\hskip\labelsep{\it Acknowledgement.}]}
{\endtrivlist\addvspace{6pt}}


\title{On deformations of pairs (manifold, coherent sheaf)}
\date{\today}

\author{Donatella Iacono}
\address{\newline  Universit\`a degli Studi di Bari,
\newline Dipartimento di Matematica,
\hfill\newline Via E. Orabona 4,
I-70125 Bari, Italy.}
\email{donatella.iacono@uniba.it}
\urladdr{\href{http://www.dm.uniba.it/~iacono/}{www.dm.uniba.it/~iacono/}}

\author{Marco Manetti}
\address{\newline
Universit\`a degli studi di Roma ``La Sapienza'',\hfill\newline
Dipartimento di Matematica \lq\lq Guido
Castelnuovo\rq\rq,\hfill\newline
P.le Aldo Moro 5,
I-00185 Roma, Italy.}
\email{manetti@mat.uniroma1.it}
\urladdr{\href{http://www.mat.uniroma1.it/people/manetti/}{www.mat.uniroma1.it/people/manetti/}}

\begin{abstract}
We analyse infinitesimal deformations of pairs $(X,\sF)$ with  $\sF$  a coherent sheaf on a smooth projective variety  $X$ over an algebraically closed field of characteristic $0$. We describe a differential graded Lie algebra controlling the deformation problem, and we prove an analog of a Mukai-Artamkin Theorem about the trace map.

\end{abstract}

\subjclass[2010]{14D15, 17B70, 18G50,   13D10}
\keywords{Deformation of manifold and coherent sheaf, differential graded Lie algebras}

\maketitle

\section{Introduction}

Let $\sF$ be a coherent sheaf on a smooth projective variety $X$ over an algebraically closed field of characteristic $0$; in
particular  $\sF$ admits a finite locally free resolution.
Then, it is defined the sheaf trace morphism  
\begin{equation}\label{eq trace map}
\Tr\colon  \HOM_{\Oh_X} (\sF,\sF) \to \Oh_X\,,
\end{equation}
and the trace maps
\begin{equation}\label{eq trace map ext}
\Tr^i\colon  \Ext_X^i (\sF,\sF) \to H^i(X,\Oh_X),
\end{equation}
which are the morphisms  induced in hypercohomology by  $\Tr$ see e.g. \cite{Arta,Kobabook,casson}.

  The maps $\Tr^1$ and $\Tr^2$ have a clear interpretation in the setting of deformation theory of $\sF$ and of the determinant bundle $\det \sF$ of $\sF$. Every deformation of $\sF$ induces naturally a deformation of the determinant line bundle
$\det \sF$; the vector spaces $\Ext_X^i (\sF,\sF)$, $i=1,2$, are the tangent and obstruction spaces 
of the functor $\Def_{\sF}$ of deformations of $\sF$; the spaces $H^i(X,\Oh_X)$, $i=1,2$, 
are  the tangent and obstruction spaces 
of the functor $\Def_{\det \sF}$ of deformations of $\det \sF $. Then, the maps $\Tr^i$, $i=1,2$, are induced by the 
natural transformation  $\Def_{\sF}\to \Def_{\det \sF}$. 

Moreover, since we are in characteristic $0$, the Picard functor is unobstructed and  this allows to prove that $\Tr^2$ annihilates all
the obstructions  to deformations of $\sF$. As a consequence, we have the following well known result.

\begin{theorem}[Mukai-Artamkin \cite{Arta,mukai84}]\label{thm.artamkin}
In the above situation, if $\Tr^2$ is injective, then the deformation functor $\Def_{\sF}$ is unobstructed. If
$\Tr^1$ is surjective and $\Tr^2$ is injective, then the natural transformation 
$\Def_{\sF}\to \Def_{\det \sF}$ is smooth.  
\end{theorem}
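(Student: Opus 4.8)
The plan is to derive both assertions from the general obstruction‑theoretic formalism for functors of Artin rings, the one substantial geometric input being the smoothness of $\Def_{\det\sF}$. First I would fix the data: the functor $\Def_{\sF}$ carries a complete obstruction theory with tangent space $\Ext_X^1(\sF,\sF)$ and obstruction space $\Ext_X^2(\sF,\sF)$; the functor $\Def_{\det\sF}$ carries a complete obstruction theory with tangent space $H^1(X,\Oh_X)$ and obstruction space $H^2(X,\Oh_X)$; and the natural transformation $\Def_{\sF}\to\Def_{\det\sF}$ is a morphism of functors with obstruction theories which induces $\Tr^1$ on tangent spaces and $\Tr^2$ on obstruction spaces. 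The cleanest way to see this last compatibility is through differential graded Lie algebras: the sheaf trace $\Tr\colon\HOM_{\Oh_X}(\sF,\sF)\to\Oh_X$ promotes to a morphism of DGLAs from a DGLA governing $\Def_{\sF}$ to the abelian DGLA $R\Gamma(X,\Oh_X)$ governing $\Def_{\det\sF}$, and a morphism of DGLAs automatically intertwines the Maurer--Cartan and gauge formalism, hence the obstruction maps, and induces $\Tr^i$ in cohomology.

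The key point is that $\Def_{\det\sF}$ is smooth: in characteristic $0$ this is the unobstructedness of the Picard functor, equivalently the elementary fact that the deformation functor of the abelian DGLA $R\Gamma(X,\Oh_X)$ is pro-represented by a power series ring. Hence the obstruction map of $\Def_{\det\sF}$ vanishes identically. Now take an arbitrary small extension $0\to I\to B\to A\to 0$ in $\Art$ and an arbitrary $\xi\in\Def_{\sF}(A)$, with obstruction $\mathrm{ob}(\xi)\in\Ext_X^2(\sF,\sF)\otimes I$ to lifting along $B\to A$. Compatibility of the obstruction maps gives $(\Tr^2\otimes\mathrm{id}_I)(\mathrm{ob}(\xi))=\mathrm{ob}(\xi')=0$, where $\xi'\in\Def_{\det\sF}(A)$ is the image of $\xi$; if $\Tr^2$ is injective this forces $\mathrm{ob}(\xi)=0$, and completeness of the obstruction theory then produces a lift of $\xi$ to $\Def_{\sF}(B)$. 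Since $\xi$ and the small extension were arbitrary, $\Def_{\sF}$ is smooth, in particular unobstructed, which is the first assertion.

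For the second assertion, assume in addition that $\Tr^1$ is surjective. By the first part $\Def_{\sF}$ is smooth, and $\Def_{\det\sF}$ is smooth as well, so it is enough to show that $\Def_{\sF}\to\Def_{\det\sF}$ is surjective on tangent spaces; but this map is precisely $\Tr^1$, surjective by hypothesis, and a morphism of smooth functors of Artin rings that is surjective on tangent spaces is smooth. Alternatively one may apply in a single step the standard smoothness criterion --- a morphism underlying a morphism of complete obstruction theories is smooth provided it is surjective on tangent spaces and injective on obstruction spaces --- whose inductive proof lifts along one small extension at a time, using tangent surjectivity for the square-zero extension $\K[\epsilon]\to\K$ and to correct lifts, and obstruction injectivity to annihilate the obstruction in $\Def_{\sF}$ whenever the corresponding obstruction in $\Def_{\det\sF}$ vanishes.

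The only genuinely non-formal ingredients are therefore the smoothness of $\Def_{\det\sF}$, coming from characteristic $0$, and the fact that the natural transformation $\Def_{\sF}\to\Def_{\det\sF}$ is covered by a DGLA morphism into $R\Gamma(X,\Oh_X)$ that induces $\Tr^1$ and $\Tr^2$ in cohomology; the latter relies on the explicit DGLA model of $\Def_{\sF}$ constructed in the body of the paper, and I expect it to be the only point requiring real care. Everything else is a formal unwinding of the notions of obstruction theory and of smooth morphism of functors of Artin rings.
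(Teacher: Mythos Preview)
Your proposal is correct and matches the paper's own treatment: the paper does not give a detailed proof of this classical result but only the sketch in the introduction, namely that in characteristic $0$ the Picard functor is unobstructed, hence $\Tr^2$ annihilates all obstructions of $\Def_{\sF}$, and the rest follows from the standard smoothness criterion. Your argument fills in exactly these steps, and your remark that the compatibility of the natural transformation with $\Tr^1,\Tr^2$ is realised by a morphism of DGLAs is precisely the viewpoint the paper develops (and later exploits for the pair version, Theorem~\ref{teo finale su traccia}).
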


In their papers, Mukai and Artamkin assume that $\sF$ is simple, however it is immediate to see that this assumption is used only 
for the pro-representability of the functor $\Def_{\sF}$, and hence for the existence of the universal deformation of $\sF$, while the proof of the smoothness of the semiuniversal deformation works also without the simpleness assumption.

One of the main goal of this paper is to extend the previous result to the case of infinitesimal deformations of pairs  $(X,\sF)$, consisting of deformations of both the variety $X$ and the coherent sheaf $\sF$.

As a first result, we describe a well defined homotopy class of differential graded Lie algebras associated with the deformations of the pair $(X,\sF)$.

For every quasi-coherent sheaf of DG-Lie algebras $\sL^*$ on an algebraic variety $X$ it is defined, up to homotopy equivalence, the DG-Lie algebras of derived sections $R\Gamma(X,\sL^*)$  and we shall say that a deformation problem is controlled by $\sL^*$ if it is controlled in the usual way by the DG-Lie algebra $R\Gamma(X,\sL^*)$. A canonical representative for $R\Gamma(X,\sL^*)$ is given 
by the totalization, in the simplicially enriched model category of DG-Lie algebras, of the 
cosimplicial space of \v{C}ech cochains of $\sL^*$, with respect to any open affine cover of $X$, see 
Section~\ref{sec.review}.

For a  coherent sheaf $\sF$ on a projective smooth variety $X$, it is known 
that the 
deformations of  $\sF$ are controlled by the sheaf of DG-Lie algebra of 
endomorphisms of any finite locally free resolution of $\sF$  \cite{FIM}. 

It is also known that if $\sE$ is a locally free sheaf on a smooth variety $X$, then the 
deformations of the pair $(X,\sE)$ are controlled by the sheaf of Lie algebra of 
first order differential operators on $\sE$ with principal symbol \cite{huang,manetticinese,Sernesi}. For a general coherent sheaf $\sF$, the sheaf of differential operators on $\sF$ is equally well defined 
\cite[Section 16.8]{egaIV4} but, in general,  it is not the right object controlling the deformations of pairs. 

Therefore, we first introduce the right algebraic object associated with the deformation problem: we shall call this object the module of derivations of pairs. It extends the sheaf of differential operators on $\sF$, involving a  finite   locally free resolution of $\sF$. This allows us to define a  coherent sheaf of DG-Lie algebras over $X$ controlling the deformations of the pair $(X,\sF)$ (Theorem \ref{thereom D1 controlla def coppia X F}).

 In particular, the cohomology groups $T^i_{(X,\sF)}$  of the associated  DG-Lie algebra fits into a long exact sequence:
\[ \cdots\to \Ext^i_X(\sF,\sF)\to T^i_{(X,\sF)}\to H^i(X,\Theta_X)\to \Ext^{i+1}_X(\sF,\sF)\to \cdots\;,\]
where $\Theta_X$ denotes the tangent sheaf of $X$.
In particular, we recover the well known fact that if $\Ext^2_X(\sF,\sF)=0$, then the natural transformation 
$\Def_{(X,\sF)}\to  \Def_X$ is smooth, since it is surjective on tangent spaces ($T^1_{(X,\sF)}\to H^1(X,\Theta_X)$) and injective on obstruction spaces ($T^2_{(X,\sF)}\to H^2(X,\Theta_X)$).

Then, we devote our attention to  the natural transformations 
\[\Def_{(X,\sF)}\to \Def_{(X,\det \sF)}\to \Def_X.\] 
In particular, we describe an extension of the trace maps to the DG-Lie algebra of differential operators with principal symbol (Theorem \ref{thm.tracediagram}).

Finally,  we are able to prove the following result (Theorem \ref{teo finale su traccia}), which is the analog of Theorem~\ref{thm.artamkin}, for deformations of pairs.

\begin{theorem}\label{thm.maintheorem}
Let $ \sF$ be a coherent sheaf on  a projective smooth variety $X$ defined over an algebraically closed field of characteristic 0. Consider the trace maps $\Tr^i\colon  \Ext_X^i (\sF,\sF) \to H^i(X,\Oh_X)$: 
\begin{enumerate}
\item if the map  
$\Tr^1$ is surjective and the map $\Tr^2$ is injective, then the natural transformation 
$\Def_{(X,\sF)}\to \Def_{(X,\det \sF)}$ is smooth;

\item if $\Tr^2$ is injective and $\Def_{(X,\det \sF)}$ is unobstructed, then $\Def_{(X,\sF)}$ is unobstructed;

\item if $H^0(X,\Theta_X)=0$ and 
$\Ext_X^0(\sF,\sF)=\K$ (e.g., if $\sF$ is simple), then $\Def_{(X,\sF)}$ is pro-representable.
\end{enumerate}
\end{theorem}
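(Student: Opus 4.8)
The plan is to deduce Theorem~\ref{thm.maintheorem} from the DG-Lie algebra description already set up in the paper, mimicking the classical argument of Theorem~\ref{thm.artamkin} but working one level up, with $\Def_X$ (or rather $\Def_{(X,\det\sF)}$) playing the role that $\Def_{\det\sF}$ played in the sheaf-only case. The key structural input is the homotopy fiber sequence of DG-Lie algebras underlying the natural transformation $\Def_{(X,\sF)}\to\Def_{(X,\det\sF)}$, together with the extended trace diagram of Theorem~\ref{thm.tracediagram}: passing to cohomology of the mapping cone of the trace morphism $R\Gamma(X,\DER)\to R\Gamma(X,\DER_{\det})$ gives a long exact sequence whose connecting maps, on the relevant degrees, are precisely $\Tr^1$ and $\Tr^2$ (twisted by the $H^i(X,\Theta_X)$ terms, which cancel since the two pair-deformation problems have the same underlying $\Def_X$). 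Concretely, the obstruction theory of $\Def_{(X,\sF)}$ has obstruction space $T^2_{(X,\sF)}$ sitting in the exact sequence displayed in the introduction, and the map to the obstruction space of $\Def_{(X,\det\sF)}$ has ``kernel controlled by'' $\coker(\Tr^1)$ and ``cokernel controlled by'' $\ker(\Tr^2)$.

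First I would make precise the relative obstruction statement: using the standard criterion (Manetti's obstruction-theoretic smoothness criterion, or equivalently the Schlessinger-type criterion) a natural transformation of deformation functors induced by a morphism of DG-Lie algebras $\phi\colon L\to M$ is smooth provided $H^1(\phi)$ is surjective and $H^2(\phi)$ is injective. So for part (1) I would take $L=R\Gamma(X,\DER_{(X,\sF)})$ and $M=R\Gamma(X,\DER_{(X,\det\sF)})$, and show $H^1(\phi)$ surjective and $H^2(\phi)$ injective. By the trace diagram of Theorem~\ref{thm.tracediagram} there is a commutative ladder relating the long exact sequence $\cdots\to\Ext^i_X(\sF,\sF)\to T^i_{(X,\sF)}\to H^i(X,\Theta_X)\to\cdots$ to the corresponding sequence $\cdots\to H^i(X,\Oh_X)\to T^i_{(X,\det\sF)}\to H^i(X,\Theta_X)\to\cdots$ for the line bundle, with vertical maps $\Tr^i$ on the $\Ext^i(\sF,\sF)\to H^i(X,\Oh_X)$ columns and the identity on the $H^i(X,\Theta_X)$ columns. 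A diagram chase (five-lemma style) then shows: if $\Tr^1$ is surjective and $\Tr^2$ injective, then $T^1_{(X,\sF)}\to T^1_{(X,\det\sF)}$ is surjective and $T^2_{(X,\sF)}\to T^2_{(X,\det\sF)}$ is injective, giving (1).

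For part (2), I would combine (1) with the hypothesis that $\Def_{(X,\det\sF)}$ is unobstructed: smoothness of the transformation plus smoothness of the target forces the source to be smooth, since the composition of a smooth morphism with a smooth morphism to a point is smooth, i.e. $\Def_{(X,\sF)}\to *$ factors through a smooth map followed by a smooth functor. Actually, to avoid requiring the full hypothesis of (1), I would note that unobstructedness only needs injectivity of $H^2(\phi)$ on the obstruction space together with unobstructedness of $M$: an obstruction in $T^2_{(X,\sF)}$ maps to an obstruction in $T^2_{(X,\det\sF)}$, which vanishes by unobstructedness of the target, hence the original obstruction lies in the kernel of $T^2_{(X,\sF)}\to T^2_{(X,\det\sF)}$; from the ladder this kernel is a quotient of $\ker(\Tr^2)=0$, so it vanishes. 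This is the cleanest route and uses only $\Tr^2$ injective, matching the statement.

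For part (3), pro-representability is a statement about $\Def_{(X,\sF)}$ alone, so I would verify Schlessinger's criterion $(H_4)$, i.e. that the automorphisms acting trivially on tangent spaces are trivial; equivalently, by the DG-Lie dictionary, that $H^0\bigl(R\Gamma(X,\DER_{(X,\sF)})\bigr)$ is ``as small as possible''. From the long exact sequence $0\to H^0(X,\Theta_X)\to$ — wait, the sequence reads $\cdots\to\Ext^0_X(\sF,\sF)\to T^0_{(X,\sF)}\to H^0(X,\Theta_X)\to\Ext^1_X(\sF,\sF)\to\cdots$, so $T^0_{(X,\sF)}$ is an extension of a subspace of $H^0(X,\Theta_X)$ by a quotient of $\Ext^0_X(\sF,\sF)$. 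The hypotheses $H^0(X,\Theta_X)=0$ and $\Ext^0_X(\sF,\sF)=\K$ then give $T^0_{(X,\sF)}=\K$, corresponding to the scalar (homothety) automorphisms only; these act trivially on every deformation, so the functor $\Def_{(X,\sF)}$ has the same deformation theory as its rigidified version and Schlessinger's $(H_4)$ holds. I would cite the standard fact that a deformation functor with finite-dimensional tangent space and $H^0$ of the controlling DG-Lie algebra acting trivially is pro-representable; this is where I would invoke the precise statement from the foundational references (e.g. \cite{FIM, Sernesi}).

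The main obstacle I anticipate is \emph{not} the diagram chases, which are routine once the trace ladder is in place, but rather establishing that the vertical arrows in the ladder are genuinely compatible at the level of DG-Lie algebras and not merely at the level of cohomology — i.e. that the extended trace map of Theorem~\ref{thm.tracediagram} is a morphism in the homotopy category of DG-Lie algebras fitting into a strictly commuting (or coherently homotopy-commuting) diagram with the determinant transformation. Granting Theorem~\ref{thm.tracediagram} in the strong form stated, this compatibility is already available and the rest is bookkeeping; the delicate point is simply to make sure the $H^i(X,\Theta_X)$ columns match up under the identity, so that the connecting homomorphisms in the mapping cone reduce exactly to $\Tr^i$ with no correction terms.
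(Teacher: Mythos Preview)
Your proposal is correct and follows essentially the same route as the paper: for parts (1) and (2) the paper uses exactly the commutative ladder you describe (coming from Theorem~\ref{thm.tracediagram}, with kernel $\HOM^*_{\Oh_X}(\sE^*,\sE^*)_0$ on both vertical arrows) together with the Standard Smoothness Criterion, and for part (3) it computes $T^0_{(X,\sF)}=\K\cdot\Id$ from the long exact sequence~\eqref{equ.lungaipercoomologia} and concludes pro-representability. The only refinement to note is that for (3) the paper does not invoke Schlessinger's $(H_4)$ directly but instead applies Lemma~\ref{lem.criteriouniversal}, which packages precisely your observation that the identity is central (i.e., lies in $N^0=\{x\in L^0\mid dx=0,\ [x,L^1]=0\}$) into a clean pro-representability criterion; your ``scalars act trivially'' is exactly the content of $N^0\twoheadrightarrow H^0(L)$.
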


For instance if $\sF$ is a simple coherent sheaf \cite[p. 101]{mukai84} of positive rank over a surface $S$ with trivial canonical bundle, then 
$\Tr^1$ is surjective, $\Tr^2$ is injective and therefore
the natural transformation $\Def_{(X,\sF)}\to \Def_{(X,\det \sF)}$ is smooth.  
When $\sF$ is locally free, Theorem~\ref{thm.maintheorem} was already proved in \cite{huang} by using 
transcendental methods, and hence over the field of complex numbers.  

Our proof of Theorem~\ref{thm.maintheorem} is almost entirely algebraic and it relies on the  the explicit description of a   DG-Lie algebra controlling the deformations of the pair $(X,\sF)$ and the extension of the trace maps.

Another derived extension of the determinant map was introduced in \cite{timo} from the derived stack of perfect complex to derived stack of line bundle.

The paper goes as follows. The first sections of this paper are devoted to the introduction and the  study of the first properties of  the module of derivations of pairs; we prove that it behaves very well with respect to all the canonical constructions, injective and projective resolutions, de Rham complexes and Fitting stratifications.
In Section \ref{sezione definizione traccia} , we define a  coherent sheaf of DG-Lie algebras associated with derivations of pairs and the extension of the   trace map  (Theorem \ref{thm.tracediagram}), that generalises the classical one given in \eqref{eq trace map}.
 Section \ref{sezione DGLA e tot} is included for readers's convenience: here we review the relevant notions on deformation functors associated with a differential graded Lie algebras and with a semicosimplicial DG-Lie algebras. 
In Section \ref{sezione deformazioni coppie geoemtrica}, we analyse the infinitesimal deformations of pairs, describing the 
DG-Lie algebra that controls these deformations (Theorem \ref{thereom D1 controlla def coppia X F}) and we prove the main theorem  (Theorem \ref{thm.maintheorem}) about the trace map (Theorem \ref{teo finale su traccia}).

\bigskip
\section{Derivations and automorphisms of pairs}
\label{sec.derivationpairs}

Let $A\to R$ be a morphism of unitary commutative rings and $M,N$ two $R$-modules. We shall denote by 
$\Hom_A(M,N)$ (resp.: $\Hom_R(M,N)$) the $R$-module of $A$-linear (resp.: $R$-linear) maps $M\to N$: 
the $R$-module structure on $\Hom_A(M,N)$
is induced by the $R$-module structure on $N$. We shall denote by $\Der_A(R,N)=\Hom_R(\Omega_{R/A},N)$ the $R$-module of 
$A$-linear derivations $R\to N$. We shall refer to the $R$-module $R\oplus M$ as the trivial extension whenever $R\oplus M$ is considered as a commutative ring  equipped  with the product $(r,m)(s,n)=(rs,rn+sm)$, see e.g., \cite[p.10]{Sernesi}.

\begin{definition}\label{def.derivationspairs} 
Let $A\to R$ be a morphism of unitary commutative rings and $M$ an $R$-module. The $R$-module of 
$A$-derivations of the pair $(R,M)$ is defined as
\[ D_A(R,M)=\left\{(h,u)\in \Der_A(R,R)\times \Hom_A(M,M)\;\middle|\;
\begin{array}{l}
 u(rm)-ru(m)=h(r)m,\smallskip\\
 \text{for every } r\in R,\; m\in M\end{array}\right\}.\]
\end{definition}

\begin{remark} In the setup of the above definition:
\begin{enumerate}

\item if $r_1,r_2,\ldots\in R$ generate $R$ as an $A$-algebra and $m_1,m_2,\ldots\in M$ generate $M$ as $R$-module, then every $(h,u)\in  D_A(R,M)$ is uniquely determined by 
$h(r_1),h(r_2),\ldots$ and $u(m_1),u(m_2),\ldots$;

\item if  $R\oplus M$ is the trivial extension of $R$ by $M$, then a pair $(h,u)\in \Hom_A(R,R)\times \Hom_A(M,M)$ belongs to $D_A(R,M)$ if and only if 
the map $R\oplus M\to R\oplus M$, $(r,m)\mapsto (h(r),u(m))$ is an $A$-derivation.
\end{enumerate}
\end{remark}

\begin{lemma}\label{lem.esattapercoerenza} 
There exists an exact sequence of $R$-modules
\[ 0\to D_A(R,M)\to \Der_A(R\oplus M,R\oplus M)\xrightarrow{\Phi} \Hom_{R}(M,R)\oplus \Der_A(R,M)\,,\]
where the $R$-module structure on the derivations of the trivial extension $R\oplus M$ is induced by the 
inclusion $R\to R\oplus M$. 
\end{lemma}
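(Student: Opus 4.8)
The plan is to produce the map $\Phi$ explicitly and then identify its kernel with $D_A(R,M)$ by unwinding definitions. First I would recall that a derivation $\delta \in \Der_A(R\oplus M, R\oplus M)$ is, by definition, an $A$-linear map satisfying the Leibniz rule with respect to the trivial-extension product $(r,m)(s,n)=(rs,rn+sm)$. Writing $\delta(r,m) = (\alpha(r,m), \beta(r,m))$ with $\alpha\colon R\oplus M\to R$ and $\beta\colon R\oplus M\to M$ both $A$-linear, I would decompose each into its restrictions to the two summands: $\alpha(r,m) = h(r) + a(m)$ and $\beta(r,m) = b(r) + u(m)$, where $h\colon R\to R$, $a\colon M\to R$, $b\colon R\to M$, $u\colon M\to M$ are $A$-linear. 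The Leibniz rule, tested on products of the four types $(r,0)(s,0)$, $(r,0)(0,n)$, $(0,m)(s,0)$, $(0,m)(0,n)$, then translates into a finite list of identities on $h,a,b,u$.

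Next I would extract exactly what those identities say. Testing on $(r,0)(s,0)=(rs,0)$ gives that $h$ is an $A$-derivation $R\to R$ and $b$ is an $A$-derivation $R\to M$ (using the $R$-module structure on $M$). Testing on $(r,0)(0,n)=(0,rn)$ gives $u(rn) = r\,u(n) + h(r)\,n$ together with $a(rn) = r\,a(n)$, i.e. $a\in\Hom_R(M,R)$. Testing on $(0,m)(0,n)=(0,0)$ gives the constraint coming from $M\cdot M=0$, which forces $a(m)\,n + a(n)\,m = 0$ in $M$ for all $m,n$; I would note this is automatically implied once we know the component structure and does not further constrain $a$ beyond $R$-linearity (or, if it does in degenerate cases, it is harmless — this is the one spot to check carefully). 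The upshot is a bijection between $\Der_A(R\oplus M,R\oplus M)$ and quadruples $(h,a,b,u)$ with $h\in\Der_A(R,R)$, $a\in\Hom_R(M,R)$, $b\in\Der_A(R,M)$, and $u$ an $A$-linear endomorphism of $M$ satisfying $u(rn)-r\,u(n) = h(r)\,n$.

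Then I would define $\Phi$ to be the map sending such a derivation to $(a,b)\in \Hom_R(M,R)\oplus \Der_A(R,M)$, i.e. "forget $h$ and $u$, keep the off-diagonal components." $R$-linearity of $\Phi$ is immediate from the stated $R$-module structure on $\Der_A(R\oplus M,R\oplus M)$ induced by $R\hookrightarrow R\oplus M$, since scaling by $r$ scales each component compatibly. The kernel of $\Phi$ consists of the derivations with $a=0$ and $b=0$; by the classification above these are exactly the pairs $(h,u)$ with $h\in\Der_A(R,R)$ and $u\in\Hom_A(M,M)$ satisfying $u(rm)-r\,u(m)=h(r)m$, which is precisely $D_A(R,M)$ by Definition 1.3. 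Injectivity of $D_A(R,M)\to \Der_A(R\oplus M,R\oplus M)$ and exactness at the middle term follow at once. I expect the only mildly delicate point to be the bookkeeping around the $(0,m)(0,n)$ test and confirming it imposes no hidden condition; everything else is a routine Leibniz-rule computation, and indeed part (2) of the preceding Remark already packages the statement "$(h,u)\mapsto(h(r),u(m))$ is a derivation" $\iff$ "$(h,u)\in D_A(R,M)$," which is the kernel computation in disguise.
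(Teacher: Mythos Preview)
Your approach is correct and essentially identical to the paper's: both decompose a derivation of $R\oplus M$ into a $2\times 2$ block of $A$-linear maps, define $\Phi$ as projection onto the off-diagonal blocks, verify these land in $\Hom_R(M,R)\oplus\Der_A(R,M)$, and identify the kernel with $D_A(R,M)$ via the remark you cite. One small correction: the test on $(0,m)(0,n)$ does impose the genuine extra condition $a(m)n+a(n)m=0$ on $a\in\Hom_R(M,R)$ (not only in degenerate cases---e.g.\ take $M=R$), and the paper in fact records this as the description of the image of $\Phi$; but as you anticipated, this is harmless for the lemma since exactness at the middle term, not surjectivity, is what is claimed.
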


\begin{proof} 
Every element of 
$\Hom_A(R\oplus M,R\oplus M)$ is represented by a matrix of $A$-linear maps
$\begin{pmatrix}a&b\\ c&d\end{pmatrix}$, with 
\[ a\in \Hom_A(R,R),\quad b\in \Hom_A(M,R),\quad c\in \Hom_A(R,M),\quad d\in \Hom_A(M,M),\]
and then there exists a natural isomorphism of $R$-modules
\[ \Hom_A(R\oplus M,R\oplus M)=\Hom_A(R,R)\oplus\Hom_A(M,R)\oplus\Hom_A(R,M)\oplus\Hom_A(M,M)\,.\]
We have already noticed that $(h,u)\in D_A(R,M)$ if and only if  
$\begin{pmatrix}h&0\\ 0&u\end{pmatrix}\in \Der_A(R\oplus M,R\oplus M)$.
Therefore, defining $\Phi\begin{pmatrix}a&b\\ c&d\end{pmatrix}=(b,c)$, it is straightforward to see that 
if $\begin{pmatrix}a&b\\ c&d\end{pmatrix}$ is a derivation, then $b$ is $R$-linear and $c$ is a derivation.
It also easy to prove that the image of $\Phi$ is  
\[ \{b\in \Hom_{R}(M,R)\mid b(m)n+mb(n)=0\quad \forall\; m,n\in M\}\oplus \Der_A(R,M)\,.\]
\end{proof}

\begin{lemma}\label{lem.basicexactsequence} 
In the setup of Definition~\ref{def.derivationspairs}, let us denote by 
\[q\colon D_A(R,M)\to \Hom_{A}(M,M)\qquad \mbox{ and }\qquad \alpha\colon D_A(R,M)\to\Der_A(R,R),\]
the  projection maps restricted to  $D_A(R,M)$:
\begin{enumerate}

\item if $M$ is a faithful module, i.e., if $ann(M)=0$, then $q$ 
is injective and its image is the submodule of 
differential operators of first order with principal symbol;

\item there exists an  exact sequence of $R$-modules
\begin{equation}\label{equ.basicexactsequence} 0\to \Hom_R(M,M)\to  D_A(R,M)\xrightarrow{\,\alpha\,}\Der_A(R,R)\;.\end{equation}

\end{enumerate}

\end{lemma}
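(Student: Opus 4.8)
The plan is to treat the two assertions separately, both being essentially direct unwindings of Definition \ref{def.derivationspairs}, with the exact sequence of Lemma \ref{lem.esattapercoerenza} available as a tool for the second.

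For part (1), I would first check that $q$ is injective when $\mathrm{ann}(M)=0$. If $(h,u)\in D_A(R,M)$ lies in $\ker q$, then $u=0$, so the defining relation $u(rm)-ru(m)=h(r)m$ collapses to $h(r)m=0$ for all $r\in R$, $m\in M$; hence $h(r)\in\mathrm{ann}(M)=0$ for every $r$, i.e.\ $h=0$ and $(h,u)=0$. Next I would identify the image. The point is that the relation $u(rm)-ru(m)=h(r)m$ says precisely that $u$ is a first-order differential operator on $M$ whose symbol is multiplication by the derivation $h$ — which, when $M$ is faithful, is the condition ``differential operator of first order with principal symbol'' in the sense of \cite[Section 16.8]{egaIV4}. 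So I would show the two-way inclusion: given $(h,u)\in D_A(R,M)$, the element $u$ is such an operator with symbol $h=\alpha(h,u)$; conversely, given a first-order operator $u$ with principal symbol, the symbol is an $R$-linear map $R\to\HOM_R(M,M)$, equivalently (by faithfulness, which lets one recover $h(r)$ from the endomorphism ``multiplication by $h(r)$'') a derivation $h\colon R\to R$, and then $(h,u)\in D_A(R,M)$ maps to $u$. I expect the only mildly delicate point here is matching conventions on what ``principal symbol'' means, namely checking that the symbol lands in the submodule of \emph{scalar} (i.e.\ homothety-type) endomorphisms, which is exactly where faithfulness of $M$ is used.

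For part (2), the map $\alpha$ is the restriction of the projection $D_A(R,M)\to\Der_A(R,R)$, $(h,u)\mapsto h$. Its kernel consists of pairs $(0,u)$ with $u(rm)-ru(m)=0$ for all $r,m$, i.e.\ $u\in\Hom_R(M,M)$; conversely every $R$-linear endomorphism $u$ gives $(0,u)\in D_A(R,M)$. This identifies $\ker\alpha=\Hom_R(M,M)$ and gives exactness of \eqref{equ.basicexactsequence}. Alternatively one can read this off from Lemma \ref{lem.esattapercoerenza}: composing the inclusion $D_A(R,M)\hookrightarrow\Der_A(R\oplus M,R\oplus M)$ with the identification of $\Hom_A(R\oplus M,R\oplus M)$ as a $4$-fold direct sum, the elements of $D_A(R,M)$ are the diagonal pairs $\begin{pmatrix}h&0\\ 0&u\end{pmatrix}$, and the further projection to $\Hom_A(M,M)$ followed by the $\Der$-condition gives the same description.

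The main obstacle is not any computation but the bookkeeping in part (1): one must make sure that the phrase ``first order differential operator with principal symbol'' is being used consistently with the reference, and that the faithfulness hypothesis is invoked exactly at the step where one passes between a derivation $h\colon R\to R$ and the corresponding scalar-valued symbol $r\mapsto (m\mapsto h(r)m)$. Everything else is a routine check that the relation in Definition \ref{def.derivationspairs} is symmetric in the roles it assigns to $h$ and $u$ under the two projections.
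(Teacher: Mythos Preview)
Your proposal is correct and follows essentially the same route as the paper: direct verification of injectivity from faithfulness, identification of the image via the symbol $[u,r]=h(r)\Id_M$, and the kernel computation for $\alpha$. One small slip to fix when you write it up: the symbol $\sigma(u)\colon R\to \Hom_R(M,M)$, $r\mapsto [u,r]$, is an $A$-\emph{derivation} (from $[u,rs]=[u,r]s+r[u,s]$), not an $R$-linear map---this is precisely what makes the recovered $h$ a derivation rather than merely an $A$-linear map.
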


\begin{proof} The only nontrivial statement is the one concerning the image of $q$. 
Recall that a morphism $v\in \Hom_A(M,M)$ is a differential operator of first order if for every  $r\in R$  
the map 
\[[v,r]\colon M\to M,\quad [v,r](m)=v(rm)-rv(m),\]
is a morphism of $R$-modules. In this case the  symbol $\sigma(v)$ is defined as the map
\[ \sigma(v)\colon R\to \Hom_R(M,M),\qquad  \sigma(v)(r)=[v,r]\,,\] 
and it is called principal if $\sigma(v)(r)$ is a scalar multiple of the identity for every $r\in R$. 
Notice that the symbol $\sigma(v)$ is an $A$-derivation, since for every $r,s\in R$ we have
\[ \sigma(v)(rs)=[v,rs]=[v,r]s+r[v,s]\,.\]
If $(h,u)\in D_A(R,M)$, then for every $r\in R$ 
\[ [u,r]=h(r)\Id_M\in  \Hom_R(M,M)\,.\]
Conversely, let
$v\in \Hom_A(M,M)$ be a first order differential operator such that $\sigma(v)(R)\subseteq R\Id_M$. Since $M$ is faithful,  for every $r\in R$ there exists a unique
$h(r)\in R$ such that $\sigma(v)(r)=[v,r]=h(r)\Id_M$. 
Since $\sigma(v)$ is an $A$-derivation, also $h\colon R\to R$ is an $A$-derivation and  therefore 
$(h,v)\in D_A(R,M)$.
\end{proof}

The restriction $\alpha\colon D_A(R,M)\to \Der_A(R,R)$ of the projection on the first factor is called the
anchor map of the pair.  
It is plain that $D_A(R,M)$ is a Lie subalgebra of $\Der_A(R,R)\times \Hom_A(M,M)$ and we have the relation 
\begin{equation} 
[x,ry]=\alpha(x)(r)y+r[x,y],\qquad x,y\in D_A(R,M),\quad r\in R\,,\end{equation}
called Poisson identity. Keep attention to the fact that the bracket on $\Der_A(R,R)$, and then on  
$D_A(R,M)$, is bilinear over $A$ and not over $R$.

\begin{example}[Lie derivative]\label{ex.Liederivative} 
Let $A\to R$ be a morphism of unitary commutative rings. Then, every 
$h\in \Der_A(R,R)$ gives a canonical element $(h,L_h)\in D_A(R,\Omega_{R/A})$ uniquely determined by 
the equation $L_{h}(dx)=d(h(x))$, $x\in R$.  

Recall that we may define the module of K\"{a}hler differentials as 
$\Omega_{R/A}=I/I^2$, where $I$ is the kernel of the multiplication map $R\otimes_A R\to R$, the differential 
$dx$ is the class of $x\otimes 1-1\otimes x$ in $I/I^2$ and the $R$-module structure is induced by the morphism of $A$-algebras $R\to R\otimes_A R$, $r\mapsto r\otimes 1$,  see e.g., \cite[Section 25]{Matsubook}. 
For every $h\in \Der_A(R,R)$  we define $L_{h}\colon I/I^2\to I/I^2$ as the factorization to the quotient of 
the derivation 
\[k\colon R\otimes_A R\to R\otimes_A R,\qquad k(x\otimes y)=h(x)\otimes y+x\otimes h(y)\,.\]
The equation $L_{h}(dx)=d(h(x))$ is trivially satisfied. For every $r,s,x\in R$, we have 
\[\begin{split} 
L_h(rsdx)&=k((r\otimes 1)(sx\otimes 1-s\otimes x))\\
&=(h(r)\otimes 1)(sx\otimes 1-s\otimes x)+(r\otimes 1)k(sx\otimes 1-s\otimes x)=h(r)sdx+rL_{h}(sdx)\,,
\end{split}\]
and then $(h,L_h)\in D_A(R,\Omega_{R/A})$.
\end{example}

The definition of $D_A(R,M)$ extends naturally to $D_A(R,M_{\bullet})$, where $M_\bullet$ is a diagram of $R$-modules over a small category $I$. Here we are mainly interested in two  cases. The former is when 
$I$ is just a set, a diagram of $R$-modules is just a collection $\{M_i\}$, $i\in I$, and 
$D_A(R,M_{\bullet})$ is the limit of the diagram of anchor maps $\alpha\colon D_A(R,M_i)\to \Der_A(R,R)$. For later use, it is notational convenient to denote 
\[\begin{split} {\prod_{i}}^{\times}D_A(R,M_i)&=D_A(R,M_{\bullet})=
\lim_{i}(\alpha\colon D_A(R,M_i)\to \Der_A(R,R))\\
&=\{(h,\ldots,u_i,\ldots)\mid (h,u_i)\in D_A(R,M_i),\quad \forall\; i\in I\}\,.\end{split}\]
     
The latter is when
$M_\bullet=\{M_1\xrightarrow{f}M_2\}$ is a morphism of two $R$-modules, then $D_A(R,M_{\bullet})$ is the set of triples $(h,u_1,u_2)$, with $(h,u_1)\in D_A(R,M_{1})$,
$(h,u_2)\in D_A(R,M_{2})$, and $fu_1=u_2f$.  Therefore, there exists an exact sequence of $R$-modules
\[  0\to D_A(R,M_{\bullet})\to D_A(R,M_1)\times_{\Der_A(R,R)}D_A(R,M_2)\xrightarrow{\;\Phi\;}\Hom_R(M_1,M_2),\]
where
\[ \Phi((h,u_1),(h,u_2))=fu_1-u_2f\,.\]

\begin{lemma}\label{lem.tensor_e_hom} 
Let $A\to R$ be a morphism of unitary commutative rings and $M,N$ two $R$-modules. 
Then, we have two natural $R$-linear morphisms of Lie algebras:
\begin{equation}\label{equ.tensorderiv} 
D_A(R, M\otimes_RN)\xleftarrow{\Phi} D_A(R,M)\times_{\Der_A(R,R)}D_A(R,N)\xrightarrow{\Psi} D_A(R, \Hom_R(M,N)),\end{equation}
\[ \begin{split}\Phi((h,u),(h,v))&=(h,u\otimes_R\Id_N+\Id_M\otimes_Rv),\\
\Psi((h,u),(h,v))&=(h,f\mapsto vf-fu)\,.\end{split}\]

\end{lemma}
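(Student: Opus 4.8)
The plan is to verify directly that the two stated formulas define $R$-linear Lie algebra homomorphisms; there is no real cleverness involved, only three routine checks for each of $\Phi$ and $\Psi$, namely well-definedness (the formula takes values in the claimed module of derivations of a pair), $R$-linearity, and compatibility with brackets. Throughout I would use that $D_A(R,M)\times_{\Der_A(R,R)}D_A(R,N)$ is the Lie subalgebra of $\Der_A(R,R)\times\Hom_A(M,M)\times\Hom_A(N,N)$, diagonal on the first factor, consisting of triples $(h,u,v)$ with $(h,u)\in D_A(R,M)$ and $(h,v)\in D_A(R,N)$, and that it is an $R$-submodule with $r\cdot(h,u,v)=(rh,ru,rv)$ — the containment $(rh,ru)\in D_A(R,M)$ being immediate from Definition~\ref{def.derivationspairs}.

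For $\Phi$, the only point that needs care is that $w:=u\otimes_A\Id_N+\Id_M\otimes_A v$, a priori only an $A$-linear endomorphism of $M\otimes_A N$, descends to $M\otimes_R N$. I would apply $w$ to a generator $rm\otimes n-m\otimes rn$ of the kernel of $M\otimes_A N\twoheadrightarrow M\otimes_R N$ and substitute $u(rm)=ru(m)+h(r)m$ and $v(rn)=rv(n)+h(r)n$; the output is a sum of three terms of the shape $r'x\otimes y-x\otimes r'y$, hence again in that kernel, so $w$ factors through the map I call $u\otimes_R\Id_N+\Id_M\otimes_R v$. Evaluating the descended operator on a simple tensor $m\otimes n$ and using $r(m\otimes n)=rm\otimes n=m\otimes rn$ then gives $w(r(m\otimes n))-r\,w(m\otimes n)=h(r)(m\otimes n)$, so $(h,w)\in D_A(R,M\otimes_R N)$. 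The $R$-linearity of $\Phi$ reduces to the identity $(ru)\otimes_R\Id_N+\Id_M\otimes_R(rv)=r\bigl(u\otimes_R\Id_N+\Id_M\otimes_R v\bigr)$, again checked on simple tensors. Finally, since composition of operators descended from $M\otimes_A N$ is the descent of the composition, the bracket identity $[\Phi(x),\Phi(y)]=\Phi([x,y])$ follows from $[u_1\otimes_A 1,\,u_2\otimes_A 1]=[u_1,u_2]\otimes_A 1$, $[1\otimes_A v_1,\,1\otimes_A v_2]=1\otimes_A[v_1,v_2]$ and the vanishing of the two cross commutators $[u_1\otimes_A 1,\,1\otimes_A v_2]$ and $[1\otimes_A v_1,\,u_2\otimes_A 1]$.

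For $\Psi$, set $\psi(f)=vf-fu$ for $f\in\Hom_R(M,N)$. I would first check that $\psi(f)$ is again $R$-linear, via $\psi(f)(rm)=v(rf(m))-f\bigl(ru(m)+h(r)m\bigr)=r\psi(f)(m)$, the $h(r)f(m)$ contributions cancelling; then that $\psi(rf)=r\psi(f)+h(r)f$ (using $v\circ(rf)=r(vf)+h(r)f$ and $(rf)\circ u=r(fu)$, with the $R$-action on $\Hom_R(M,N)$ the one coming from $N$), so that $(h,\psi)\in D_A(R,\Hom_R(M,N))$; then that $(rv)f-f(ru)=r(vf-fu)$, which gives $R$-linearity of $\Psi$. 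For the bracket I would expand $\psi_1\psi_2-\psi_2\psi_1$ and collect terms, obtaining $[v_1,v_2]f-f[u_1,u_2]$, which is exactly the $\Hom_R(M,N)$-component of the bracket on the source, so $[\Psi(x),\Psi(y)]=\Psi([x,y])$.

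Since $\Phi$ and $\Psi$ act as the identity on the $\Der_A(R,R)$-component, compatibility with the anchor maps — and hence the fact that the triples produced really are derivations of the respective pairs, as well as the matching of brackets in that component — is automatic. The only genuinely non-formal step is the descent of $w$ in the construction of $\Phi$, which is the single place where the defining relation of $D_A(R,-)$ is used in an essential way; I expect this to be the main (though minor) obstacle, everything else being bookkeeping.
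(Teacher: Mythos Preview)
Your proposal is correct and follows exactly the approach the paper takes: the paper's proof is the single word ``Straightforward'' together with precisely the two remarks you isolated as the only nontrivial points, namely that $u\otimes_R\Id_N+\Id_M\otimes_R v$ is well defined on $M\otimes_R N$ even though the individual summands are not, and that $vf-fu$ is $R$-linear even though $vf$ and $fu$ separately are only $A$-linear. Your detailed verifications simply flesh out what the paper leaves implicit.
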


\begin{proof} Straightforward. Notice that $u\otimes_R\Id_N+\Id_M\otimes_Rv$ is well defined, although 
$u,v$ are not $R$-linear and the two addends $u\otimes_R\Id_N$, $\Id_M\otimes_Rv$ are not defined. 
Similarly $vf-fu$ is a morphism of $R$-modules, although $vf$ and $fu$ are only $A$-linear. 
\end{proof}

As a particular case of Lemma~\ref{lem.tensor_e_hom} we obtain a natural $R$-linear morphisms of Lie algebras (the transpose)
\begin{equation}\label{equ.transposition}
\begin{split}
(-)^T\colon D_A(R,M)&\to D_A(R, \Hom_R(M,R)),\\
(h,u)&\mapsto (h,u)^T=\Psi((h,u),(h,h))=(h,f\mapsto hf-fu)\,.\end{split}
\end{equation}

\begin{definition}\label{def.leibnizextension}
The Leibniz extension of a derivation of pair  $(h,u)\in D_A(R,M)$ is the sequence $(h,u_n)\in 
D_A(R,\wedge_R^nM)$, $n\ge 0$,  uniquely determined by the formulas:
\begin{equation}\label{equ.leibnizextension} 
u_0=h,\qquad  u_n(m_1\wedge\cdots\wedge m_n)=\sum_{i=1}^nm_1\wedge\cdots\wedge u(m_i)\wedge\cdots\wedge m_n
\,.\end{equation}
\end{definition} 

The Leibniz extension is properly defined: by the universal property of wedge products, the Formula~\eqref{equ.leibnizextension} defines a sequence of $A$-linear maps    
\[ \tilde{u}_n\colon  \wedge_A^nM\to \wedge_A^nM\,.\]
Now $\wedge_R^nM$ is the quotient of $\wedge_A^nM$ by the $A$-submodule $H$ generated by all the elements \[m_1\wedge \cdots\wedge rm_i\wedge m_{i+1}\wedge\cdots\wedge m_n-
m_1\wedge \cdots\wedge m_i\wedge rm_{i+1}\wedge\cdots\wedge m_n,\quad  r\in R,\; m_j\in M,\; 0<i<n\,,\]
and it is immediate to verify that $\tilde{u}_n(H)\subset H$ and then that $(h,\tilde{u}_n)$ factors to a derivation of pair  $(h,u_n)\in 
D_A(R,\wedge_R^nM)$. The Leibniz extension is functorial in the following sense: 
given a morphism of $R$-modules $f\colon M\to N$ and $(h,u)\in D_A(R,M)$, $(h,v)\in D_A(R,N)$ such that 
$vf=fu$, then  $v_nf^{\wedge n}=f^{\wedge n}u_n$ for every $n$. Moreover, for every $n$, the map 
\[ D_A(R,M)\to D_A(R,\wedge^n_RM),\qquad (h,u)\mapsto (h,u_n),\]
is a morphism of Lie algebras. This follows immediately from the fact that, 
for every $(h,u),(k,v)\in D_A(R,M)$ we have
\[\begin{split}
u_nv_n(m_1\wedge\cdots\wedge m_n)&=\sum_{i=1}^nm_1\wedge\cdots\wedge uv(m_i)\wedge\cdots\wedge m_n\\
&\qquad+\sum_{i<j}m_1\wedge\cdots u(m_i)\wedge\cdots \wedge v(m_j)\cdots\wedge m_n\\
&\qquad+\sum_{i<j}m_1\wedge\cdots v(m_i)\wedge\cdots \wedge u(m_j)\cdots\wedge m_n\;. \end{split}\]

\begin{definition}\label{def.tracemodule} 
If $M$ is a free module of rank $n$ and $(h,u)\in D_A(R,M)$, we shall call $(h,u_n)\in D_A(R,\wedge_R^nM)$ the 
trace of $(h,u)$.
\end{definition} 

The name trace is motivated by the fact that if $h=0$, i.e., if $u\in \Hom_R(M,M)$, then 
$u_n$ is the multiplication by the trace of $u$.

\medskip
\subsection*{Automorphisms of pairs.}
Let $A\to R$ be a morphism of commutative unitary rings and let $M$ be an $R$-module.
We shall denote by $\Aut_A(R)$ the group of $A$-linear automorphisms of $R$, i.e., the automorphism group of the $A$-algebra $R$, and by $\Aut_A(M)$ (resp.: $\Aut_R(M)$) the group of $A$-linear (resp.: $R$-linear) automorphisms of $M$.

\begin{definition}\label{def.automorphismpairs} 
The group  of 
$A$-linear automorphisms of the pair $(R,M)$ is defined as the subgroup 
$\Aut_A(R,M)\subset\Aut_A(R)\times \Aut_A(M)$  
of pairs $(\theta,\phi)$  such that
$\phi(rm)=\theta(r)\phi(m)$ for every $r\in R$, $m\in M$.
\end{definition}

Let $R\oplus M$ be the trivial extension of $R$ by $M$. As in the proof of Lemma~\ref{lem.esattapercoerenza}, there exists a natural inclusion 
\[ \Hom_A(R,R)\times \Hom_A(M,M)\subset \Hom_A(R\oplus M,R\oplus M)\]
and it is immediate to 
see that an element of  $\Hom_A(R,R)\times \Hom_A(M,M)$ is an automorphism 
of the pair $(R,M)$ if and only if it is an automorphism of the $A$-algebra $R\oplus M$.

The analog of the anchor map is the group homomorphism 
\[\alpha\colon \Aut_A(R,M)\to \Aut_A(R),\qquad \alpha(\theta,\phi)=\theta,\]
whose kernel is $\Aut_R(M)$.

There exists the analog of Lemma~\ref{lem.tensor_e_hom} and of the Leibniz extension for automorphisms of pairs.
It is straightforward to verify that there exist two natural group homomorphisms

\begin{equation}\label{equ.tensorauto} 
\Aut_A(R, M\otimes_RN)\xleftarrow{\Phi} \Aut_A(R,M)\times_{\Aut_A(R)}\Aut_A(R,N)\xrightarrow{\Psi} \Aut_A(R, \Hom_R(M,N)),\end{equation}
\[ \begin{split}\Phi((\theta,\phi),(\theta,\psi))&=(\theta,\phi\otimes\psi),\\
\Psi((\theta,\phi),(\theta,\psi))&=(\theta,f\mapsto \psi f\phi^{-1})\,.\end{split}\]

Similarly, every  $(\theta,\phi)\in \Aut_A(R,M)$ gives a sequence $(\theta,\phi_n)\in 
\Aut_A(R,\wedge_R^nM)$, $n\ge 0$,  uniquely determined by the formulas:
\begin{equation} 
\phi_0=\theta,\qquad  \phi_n(m_1\wedge\cdots\wedge m_n)=\phi(m_1)\wedge\cdots\wedge  \phi(m_n)
\,.\end{equation}
If $M$ is free of rank $n$, we write $(\theta,\phi_n)=\det(\theta,\phi)$: when $\theta=\Id$ we recover the usual notion of the determinant of a $R$-linear endomorphism.

Assume now that $A$ contains the field $\mathbb{Q}$ of rational numbers and let 
$(h,u)\in D_A(R,M)\subset \Der_A(R\oplus M,R\oplus M)$ be a nilpotent derivation of pairs.
Then, also its exponential 
\[ \exp(h,u)=\left(\sum_{n=0}^\infty \frac{h^n}{n!},\sum_{n=0}^\infty \frac{u^n}{n!}\right)\]
preserves the direct sum decomposition $R\oplus M$ and then 
\[ \exp(h,u)\in \Aut_A(R,M)\subset \Aut_A(R\oplus M)\,.\]
It is plain that the exponential commutes with the anchor maps and the usual properties of the exponential imply that $\exp$ commutes with the morphisms $\eqref{equ.tensorderiv}$ and $\eqref{equ.tensorauto}$. 
The exponential also commutes with Leibniz extensions of derivations and automorphisms: this is immediate from the previous remark  since the assumption $\mathbb{Q}\subset A$ implies that $\bigwedge^n_RM$ is a direct summand of 
$\bigotimes^n_RM$. In particular, the exponential of the trace is the determinant of the exponential.

\begin{lemma}\label{lemma exp auto}
Let $R$ be a commutative unitary algebra over a field $\K$ of characteristic 0, and let 
$M$ be an $R$-module. Then, for every local Artin $\K$-algebra $A$ with residue field $\K$,  
the group $\exp(D_{\K}(R,M)\otimes\mathfrak{m}_A)$ is naturally isomorphic to the group 
of $A$-linear automorphisms of the pair $(R\otimes A,M\otimes A)$ lifting the identity on $(R,M)$.
\end{lemma}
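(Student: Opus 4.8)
The plan is to exhibit, in each direction, an explicit inverse and to check it respects the two group structures; the only point requiring care is convergence of the exponential series and the bijectivity of $\exp$ on the nilpotent pieces. First I would fix notation: write $R_A = R\otimes_\K A$, $M_A = M\otimes_\K A$, and note that since $A$ is a local Artin $\K$-algebra with residue field $\K$, the maximal ideal $\mathfrak m_A$ is nilpotent, say $\mathfrak m_A^{N+1}=0$. Set $\mathfrak g = D_\K(R,M)\otimes_\K\mathfrak m_A$. Because $D_\K(R,M)$ is an $R$-module and a $\K$-Lie subalgebra of $\Der_\K(R,R)\times\Hom_\K(M,M)$, the tensor product $\mathfrak g$ is a pronilpotent (indeed nilpotent) $\K$-Lie algebra: any iterated bracket of $N+1$ elements lands in $D_\K(R,M)\otimes\mathfrak m_A^{N+1}=0$, using that the bracket on $D_\K(R,M)$ is $\K$-bilinear and $\mathfrak m_A$ is an ideal. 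Hence the Baker–Campbell–Hausdorff series terminates and $\exp(\mathfrak g)$ is a well-defined group under the BCH product $x\ast y = x+y+\tfrac12[x,y]+\cdots$, which maps isomorphically (as a set) onto $\mathfrak g$ via $\log$.

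Next I would make sense of the target group and of $\exp$ as a map into it. An element $(h,u)\in\mathfrak g$ is a pair with $h\in\Der_\K(R,R)\otimes\mathfrak m_A$ and $u\in\Hom_\K(M,M)\otimes\mathfrak m_A$, both nilpotent $A$-linear endomorphisms (of $R_A$ and $M_A$ respectively) satisfying, by $A$-bilinear extension of the defining relation in Definition~\ref{def.derivationspairs}, the identity $u(rm)-ru(m)=h(r)m$ for all $r\in R_A$, $m\in M_A$. As explained in the paragraph preceding Lemma~\ref{lemma exp auto}, regarding $(h,u)$ as a nilpotent element of $\Der_A(R_A\oplus M_A, R_A\oplus M_A)$ (trivial extension over $A$), its exponential $\exp(h,u) = \bigl(\sum_{n\ge0} h^n/n!,\ \sum_{n\ge0} u^n/n!\bigr)$ is a finite sum, preserves the direct-sum decomposition, and is an automorphism of the $A$-algebra $R_A\oplus M_A$; equivalently $\exp(h,u)\in\Aut_A(R_A,M_A)$ in the sense of Definition~\ref{def.automorphismpairs}. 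Moreover it reduces to the identity modulo $\mathfrak m_A$, since $h\equiv 0$, $u\equiv 0$ there. So $\exp$ lands in the subgroup $G$ of automorphisms of the pair $(R_A,M_A)$ lifting $\mathrm{id}_{(R,M)}$. That $\exp\colon\mathfrak g\to G$ is a group homomorphism for the BCH product is the standard identity $\exp(x)\exp(y)=\exp(x\ast y)$, valid because everything happens inside the associative nilpotent $A$-algebra $\End_A(R_A\oplus M_A)$ where the classical proof applies verbatim.

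For injectivity and surjectivity I would argue as follows. Injectivity: if $\exp(h,u)=\mathrm{id}$ then $\sum h^n/n!=\mathrm{id}_{R_A}$ and $\sum u^n/n!=\mathrm{id}_{M_A}$, and since $h,u$ are nilpotent the usual $\log$ recovers them, so $(h,u)=0$; alternatively this is immediate from $\log\circ\exp=\mathrm{id}$ on the nilpotent Lie algebra $\mathfrak g$. Surjectivity is the main obstacle and the one step deserving real attention: given $(\theta,\phi)\in G$, I must produce $(h,u)\in\mathfrak g$ with $\exp(h,u)=(\theta,\phi)$. Because $(\theta,\phi)$ lifts the identity, both $\theta-\mathrm{id}_{R_A}$ and $\phi-\mathrm{id}_{M_A}$ have image contained in $\mathfrak m_A\cdot(-)$ and are therefore nilpotent $A$-linear maps, so $h:=\log\theta=\sum_{n\ge1}(-1)^{n+1}(\theta-\mathrm{id})^n/n$ and $u:=\log\phi$ are well-defined finite sums, nilpotent, and lie in $\Der_\K(R,R)\otimes\mathfrak m_A$ and $\Hom_\K(M,M)\otimes\mathfrak m_A$ respectively (that $h$ is a derivation follows since $\theta$ is an algebra automorphism and $\log$ of a multiplicative map is additive/derivation-like — this is the same computation, in the trivial extension $R_A\oplus M_A$, that shows $\log$ of an algebra automorphism close to the identity is a derivation). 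It remains to check the compatibility $u(rm)-ru(m)=h(r)m$, i.e. that $(h,u)\in D_\K(R,M)\otimes\mathfrak m_A$: this follows from the relation $\phi(rm)=\theta(r)\phi(m)$ defining $\Aut_A(R_A,M_A)$ by applying $\log$ to the algebra automorphism $(\theta,\phi)$ of the trivial extension $R_A\oplus M_A$ and reading off the matrix entries — equivalently, $(h,u)=\log$ of a derivation-valued exponential is again of the form guaranteed by Lemma~\ref{lem.esattapercoerenza}. Then $\exp(h,u)=(\exp h,\exp u)=(\theta,\phi)$ by construction, completing the proof. The bijection intertwines the two products by the homomorphism property already established, hence is a group isomorphism.
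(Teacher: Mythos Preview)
Your proof is correct and follows essentially the same route as the paper's. The paper is more terse: it observes that a pair $(h,u)$ is a derivation (resp.\ automorphism) of the pair $(R,M)$ if and only if the block-diagonal matrix $\begin{pmatrix}h&0\\0&u\end{pmatrix}$ is a derivation (resp.\ automorphism) of the trivial extension $S=R\oplus M$, and then simply invokes the well-known fact that for any commutative $\K$-algebra $S$ the exponential gives a natural isomorphism $\exp(\Der_\K(S,S)\otimes\mathfrak{m}_A)\cong\{\text{automorphisms of }S\otimes A\text{ lifting }\Id_S\}$. Your argument is exactly what one gets by unpacking that citation componentwise: you compute $h=\log\theta$ and $u=\log\phi$ separately and then use the trivial-extension viewpoint to verify the compatibility relation. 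The only small stylistic difference is that the paper lets the block-diagonal structure do the bookkeeping (since $\exp$ and $\log$ manifestly preserve diagonal form), whereas you check the pair condition by hand; both amount to the same computation.
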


\begin{proof} We have already noticed that via the diagonal inclusion 
\[ \Hom_\K(R,R)\times \Hom_\K(M,M)\subset \Hom_\K(R\oplus M,R\oplus M),\]
a couple $(h,u)\in \Hom_\K(R,R)\times \Hom_\K(M,M)$ is a $\K$-linear derivation (resp.: automorphism) 
of the pair $(R,M)$ if and only if it is a $\K$-linear derivation (resp.: automorphism) 
of the trivial extension $R\oplus M$. Thus, the lemma is an immediate consequence of the well known fact (see e.g. \cite[Proposition 5.44]{ManRendiconti}) that for every commutative unitary $\K$-algebra $S$, the group 
$\exp(\Der_{\K}(S,S)\otimes\mathfrak{m}_A)$ is naturally isomorphic to the group 
of $A$-linear automorphisms of $S\otimes A$ lifting the identity on $S$.
\end{proof}

\bigskip
\section{Lifting to resolutions} 
\label{sec.lifting}

Throughout this section, $A\to R$ is a fixed morphism of unitary commutative rings. For every 
$(h,u)\in D_A(R,M)$ the derivation $h$ preserves the annihilator of $M$: in fact, if $r\in R$ and 
$rm=0$ for every $m\in M$, then also 
$h(r)m=u(rm)-r u(m)=0$, for every $m\in M$. 

The above consideration shows that in general the anchor map $\alpha\colon D_A(R,M)\to \Der_A(R,R)$ is not surjective. 
For instance, if $M=R/I$, with $I$ an ideal of $R$, then the image of $\alpha$ is the submodule of derivations  preserving the ideal $I$: if $(h,u)\in D_A(R,M)$ we have seen that $h$ preserves the annihilator of $M$, which is precisely the ideal $I$.
Conversely, any derivation $h\in \Der_A(R,R)$ has a canonical lifting  $(h,h) \in D_A(R,R)$, and   
if  $h(I) \subset I$, then  $(h,h)$ factors to an element of $D_A(R,M)$.

\begin{lemma}\label{lem.liftingproperty} 
In the above setup,  if 
$f\colon P\to M$ is a surjective morphism of $R$-modules with $P$ projective, then 
every $(h,u)\in D_A(R,M)$ lifts to an element $(h,v)\in D_A(R,P)$ such that $fv=uf$.
In particular: 
\begin{enumerate}

\item every derivation of pairs lifts to any projective resolution; 

\item if $P$ is a projective  $R$-module, then the anchor map 
$\alpha\colon  D_A(R,P)\to \Der_A(R,R)$ is surjective.
\end{enumerate}
\end{lemma}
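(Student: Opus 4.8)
The plan is to lift $(h,u)\in D_A(R,M)$ through the surjection $f\colon P\to M$ by exploiting the projectivity of $P$ together with the structure of $D_A(R,P)$ as an extension of (a submodule of) $\Der_A(R,R)$ by $\Hom_R(P,P)$. First I would fix any $R$-linear splitting-free setup: since $P$ is projective, there is a lift of $f$ against surjections, but the subtlety is that we must lift not merely a map but a \emph{derivation of pairs}, i.e. an $A$-linear endomorphism of $P$ compatible with a derivation $h$ of $R$. So the first step is to produce \emph{some} $A$-linear $v_0\colon P\to P$ with $fv_0=uf$ and with the correct ``symbol'', namely $v_0(rp)-rv_0(p)=h(r)p$ for all $r\in R,\ p\in P$. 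To get this, recall from the remark after Definition~\ref{def.derivationspairs} (part (2)) that $(h,v)\in D_A(R,P)$ is equivalent to the statement that $(r,p)\mapsto(h(r),v(p))$ is an $A$-derivation of the trivial extension $R\oplus P$; I would instead argue directly: choose, using projectivity of $P$ as an $R$-module, any $R$-linear section-like data — more precisely, pick a free $R$-module $Q$ and an isomorphism $P\oplus Q\cong R^{(\Lambda)}$, so that it suffices to lift to a free module and then project, and on a free module $R^{(\Lambda)}$ the canonical derivation $(h,h)$ acting coordinatewise gives a derivation of pairs whose anchor is $h$. This already proves part (2) of the conclusion.

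For the general lifting statement, having $(h,w)\in D_A(R,P')$ on a free module $P'=R^{(\Lambda)}$ surjecting onto $M$ via $g\colon P'\to M$ (we may take $P'$ large enough to factor $f$, or simply replace $P$ by $P'$ after noting any projective is a summand of a free module), I would compare $gw$ and $ug$ as maps $P'\to M$: the map $\delta:=gw-ug$ is $R$-linear because both $(h,w)$ and $(h,u)$ have the same symbol $h$, hence their difference kills the symbol term. Then $\delta\in\Hom_R(P',M)$, and since $P'$ is projective and $f$ (or $g$) is surjective, $\delta$ lifts to some $\tilde\delta\in\Hom_R(P',P')$ with $g\tilde\delta=\delta$ (here I use $\Hom_R(P',P')\to\Hom_R(P',M)$ is surjective, which follows from projectivity of $P'$ applied to the surjection $P'\to M$). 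Replacing $w$ by $w-\tilde\delta$ gives a new pair $(h,w-\tilde\delta)\in D_A(R,P')$ — note $D_A(R,P')$ is closed under adding an $R$-linear endomorphism by the exact sequence \eqref{equ.basicexactsequence} — which now satisfies $g(w-\tilde\delta)=ug$. If $P$ was a genuine summand of $P'$ one conjugates/restricts appropriately; more cleanly, I would simply carry out the whole argument with $P$ in place of $P'$ once the existence of \emph{one} derivation of pairs on $P$ with anchor $h$ is established, and that existence is exactly what the free-module computation plus the ``summand of a free module'' observation provides, using that $D_A(-,-)$ and the anchor map behave well under direct sums (a derivation of pairs on $P\oplus Q$ with anchor $h$ restricts to one on $P$ provided it preserves the summand, which we can arrange by taking the block-diagonal piece since on a free module the coordinatewise $(h,h)$ is already block-diagonal with respect to any chosen coordinates, in particular with respect to $P\oplus Q$ if we choose the isomorphism $P\oplus Q\cong R^{(\Lambda)}$ compatibly).

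The two itemized consequences are then formal: (1) applying the main assertion inductively to the terms of a projective resolution $\cdots\to P_1\to P_0\to M\to 0$ — lift $(h,u)$ to $(h,v_0)$ on $P_0$ with $f_0v_0=uf_0$, then view $v_0$ restricted to $\ker f_0$ (which it preserves, since $f_0v_0=uf_0$ forces $v_0(\ker f_0)\subseteq\ker f_0$) as a derivation of pairs on that kernel, surject $P_1$ onto it, lift again, and continue — gives a compatible system, i.e. a derivation of pairs on the whole complex; (2) is immediate from the free-module case as noted above, since any projective $P$ admits a surjection from itself trivially, or one simply observes the anchor on a free module hits every $h$ by the coordinatewise lift $(h,h)$ and a projective is a retract of a free module so its anchor image contains that of the free module's, hence is all of $\Der_A(R,R)$.

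The main obstacle is purely bookkeeping: ensuring that the correction term $\tilde\delta$ used to fix $gw=ug$ is chosen $R$-linearly (so that $(h,w-\tilde\delta)$ remains in $D_A(R,P)$, which is where \eqref{equ.basicexactsequence} is essential — it tells us the ``kernel'' of the anchor is exactly $\Hom_R(P,P)$, so adjusting by an $R$-linear endomorphism does not disturb membership in $D_A(R,P)$ nor the anchor $h$), and that the lift $\tilde\delta$ exists, for which the single nontrivial input is the surjectivity of $\Hom_R(P,P)\to\Hom_R(P,M)$ coming from projectivity of $P$ and surjectivity of $f$. Everything else — well-definedness of the coordinatewise derivation on a free module, the compatibility with restriction to kernels in the resolution, the summand argument — is routine and I would not belabor it.
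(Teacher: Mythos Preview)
Your argument is correct and uses the same ingredients as the paper, but organized differently. The paper constructs the lift directly: on a free module with basis $\{e_i\}$ it sets $v(\sum a_ie_i)=\sum a_iv_i+h(a_i)e_i$ where $f(v_i)=u(f(e_i))$, then for general projective $P$ it writes $P$ as a retract of a free $F$ via $i\colon P\to F$, $g\colon F\to P$, lifts $(h,u)$ to $(h,w)\in D_A(R,F)$ along the surjection $fg$, and takes $(h,gwi)$. You instead split the problem into two steps: first produce \emph{some} $(h,w)\in D_A(R,P)$ with anchor $h$ (anchor surjectivity), then correct by an $R$-linear $\tilde\delta$ lifting $\delta=fw-uf\in\Hom_R(P,M)$ through $f$. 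Your route makes the role of the exact sequence \eqref{equ.basicexactsequence} more transparent---the lift is determined up to $\Hom_R(P,P)$, and projectivity lets you adjust within that fibre---whereas the paper's is a line shorter. One small imprecision: the coordinatewise derivation on $R^{(\Lambda)}$ is \emph{not} generally block-diagonal with respect to an arbitrary decomposition $P\oplus Q$; what works (and what both you and the paper actually use) is that the compression $g\,w\,i$ by the $R$-linear retraction maps is again a derivation of pairs with anchor $h$, since $g(w(ri(p)))=g(rw(i(p))+h(r)i(p))=r\,gwi(p)+h(r)p$.
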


\begin{proof} Consider first the case when $P$ is a free $R$-module with basis
$\{e_i\}$. Choosing  elements $v_i\in P$ such that 
$f(v_i)=u(f(e_i))\in M$ for every $i$, then 
the $A$-linear map  
\[ v\colon P\to P,\qquad v(\sum_{i}a_ie_i)=\sum_{i}a_iv_i+h(a_i)e_i,\]
has the required properties. If $P$ is not free, since every projective module is a direct summand of a free module, there exist a free module $F$ together with two morphisms $i\colon P\to F$, $g\colon F\to P$ such that $gi=\Id_P$. Since $fg\colon F\to M$ is surjective
there exists $(h,w)\in D_A(R,F)$ lifting $(h,u)$ and it is sufficient to take 
$(h,v)=(h,gwi)\in D_A(R,P)$. If $M=0$ then for every $h\in \Der_A(R,R)$ we have 
$(h,0)\in D_A(R,0)$ and the above computation gives the surjectivity of the anchor map
$\alpha\colon  D_A(R,P)\to \Der_A(R,R)$.
\end{proof}

\begin{lemma}\label{lem.extensionproperty} 
In the above setup,  if 
$g\colon M\to J$ is an injective morphism of $R$-modules with $J$ injective, then 
every $(h,u)\in D_A(R,M)$ extends  to an element $(h,v)\in D_A(R,J)$ such that $vg=gu$.
In particular: 
\begin{enumerate}

\item every derivation of pairs extends to any injective resolution; 

\item if $J$ is a injective  $R$-module, then the anchor map 
$\alpha\colon  D_A(R,J)\to \Der_A(R,R)$ is surjective.
\end{enumerate}
\end{lemma}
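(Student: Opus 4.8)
The plan is to dualise the proof of Lemma~\ref{lem.liftingproperty}, with the role played there by free $R$-modules now played by the \emph{coinduced} modules: for an $A$-module $W$, denote by $\Hom_A(R,W)$ the $R$-module of $A$-linear maps $R\to W$ equipped with the $R$-module structure $(r\phi)(x)=\phi(xr)$. Two features of these modules do all the work. First, if $W$ is an injective $A$-module, then $\Hom_A(R,W)$ is an injective $R$-module, since $\Hom_A(R,-)$ is right adjoint to the exact functor restricting scalars along $A\to R$. Second, for every $h\in\Der_A(R,R)$ the pair $(h,v_0)$ with $v_0\colon\phi\mapsto-\phi\circ h$ belongs to $D_A(R,\Hom_A(R,W))$, by a direct computation from the Leibniz rule $h(xr)=h(x)r+xh(r)$; this is the analogue of the fact used in Lemma~\ref{lem.liftingproperty} that a free module carries a canonical lift of every derivation.

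First I would prove the statement when $J=\Hom_A(R,W)$ with $W$ an injective $A$-module. Given $(h,u)\in D_A(R,M)$ and an injective $R$-linear map $g\colon M\to\Hom_A(R,W)$, set $\theta:=gu-v_0g\colon M\to\Hom_A(R,W)$. A one-line computation, using that $(h,v_0)\in D_A(R,\Hom_A(R,W))$ and the defining relation of Definition~\ref{def.derivationspairs}, shows that $\theta$ is $R$-linear. Since $g$ is injective and $\Hom_A(R,W)$ is injective as an $R$-module, $\theta$ extends along $g$ to an $R$-linear endomorphism $\psi$ of $\Hom_A(R,W)$; then $v:=v_0+\psi$ satisfies $(h,v)\in D_A(R,\Hom_A(R,W))$, because by the exact sequence~\eqref{equ.basicexactsequence} adding an $R$-linear endomorphism keeps one inside $D_A(R,-)$, and $vg=v_0g+\psi g=v_0g+\theta=gu$.

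Next, for an arbitrary injective $R$-module $J$ I would reduce to the previous case. Viewing $J$ as an $A$-module, choose an $A$-linear embedding $J\hookrightarrow W$ into an injective $A$-module $W$; then the composite $J\hookrightarrow\Hom_A(R,J)\hookrightarrow\Hom_A(R,W)$, where the first arrow is $j\mapsto(x\mapsto xj)$, is an $R$-linear injection into an injective $R$-module, so --- $J$ being injective --- it splits: there are $R$-linear maps $\iota\colon J\to\Hom_A(R,W)$ and $\pi\colon\Hom_A(R,W)\to J$ with $\pi\iota=\Id_J$. Now, given $(h,u)\in D_A(R,M)$ and an injective $R$-linear map $g\colon M\to J$, apply the coinduced case to $\iota g\colon M\hookrightarrow\Hom_A(R,W)$ to obtain $(h,v')\in D_A(R,\Hom_A(R,W))$ with $v'\iota g=\iota gu$, and set $v:=\pi v'\iota$. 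Using that $\iota,\pi$ are $R$-linear and $\pi\iota=\Id_J$, one checks directly that $(h,v)\in D_A(R,J)$ and that $vg=\pi v'\iota g=\pi(\iota gu)=(\pi\iota)(gu)=gu$, as required.

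Finally, the two ``in particular'' statements follow formally, exactly as in Lemma~\ref{lem.liftingproperty}: for (1) one extends $(h,u)$ successively along the maps of an injective resolution $0\to M\to J^0\to J^1\to\cdots$, noting at each step that the operator produced on $J^i$ preserves the image of the previous module, hence descends to the relevant injection; for (2) one takes $M=0$, so that $(h,0)\in D_A(R,0)$ for every $h$, and extends along $0\hookrightarrow J$. The only genuinely new point --- and the main obstacle --- is isolating the correct replacement for free modules, namely the coinduced modules $\Hom_A(R,W)$ together with their tautological derivation of pairs $v_0\colon\phi\mapsto-\phi\circ h$; once this is in place, the whole argument is the formal mirror image of the projective case.
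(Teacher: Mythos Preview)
Your proof is correct and follows essentially the same strategy as the paper's: embed $J$ as an $R$-linear retract of a coinduced module $\Hom_A(R,W)$ with $W$ injective over $A$, build the required derivation of pairs on the coinduced module, and transport it back along the retraction. The only difference is bookkeeping---the paper constructs the derivation on $\Hom_A(R,I)$ in one stroke as $w_h(\psi)=w\psi-\psi h$ with $w$ an $A$-linear extension obtained from the injectivity of $I$ over $A$, whereas you first write down the canonical lift $v_0\colon\phi\mapsto-\phi\circ h$ and then add an $R$-linear correction obtained from the injectivity of $\Hom_A(R,W)$ over $R$; both variants work and yield the same conclusion.
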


\begin{proof} 
Let $q\colon J \to I$ be an injective morphism with   $I$ injective as an $A$-module, then the composition $q g\colon M\to J\to I$ is also injective.  
Consider the $R$-module $\Hom_A(R, I)$, where the $R$-module structure is given by $(t\psi)(r)=\psi(tr)$, for any $t,r \in R$ and any $\psi \in \Hom_A(R, I)$. Then, the map 
\[\beta\colon J\to \Hom_A(R, I) \qquad  j\mapsto \beta(j)(r)= q(rj) \in I, \qquad \forall \,r \in R\,,\]
is an injective $R$-linear morphism.
Since $J$ is injective as $R$-module, there exists a splitting $\gamma\colon\Hom_A(R, I) \to J$, such that $\gamma \beta=\Id_J$.
Let  $(h,u)\in D_A(R,M)$ be a fixed derivation of the pair; since $I$ is injective as $A$-module and $u$ is a morphism of $A$-modules, there exists a morphism  $w\in \Hom_A(I,I)$ such that $wqg=qgu$:
\[ \begin{matrix}\xymatrix{M\ar[d]_u\ar[r]^{qg}&I\ar[d]_w\\
M\ar[r]^{qg}&I}\end{matrix}\;.\] 
Consider now the map $w_h\colon \Hom_A(R,I)\to \Hom_A(R,I)$ defined by 
$w_h(\psi)=w\circ \psi-\psi \circ h$, for any $\psi \in \Hom_A(R,I)$. Then, $(h,w_h) \in  D_A(R,\Hom_A(R,I) )$ since 
$w_h$ is $A$-linear and   $w_h(t\psi)=t w_h(\psi) +h(t)\psi$, for any $t,r\in R$; indeed,  we have:
\[w_h(t\psi)(r)= (w\circ t\psi-t\psi \circ h)(r)= w (\psi(tr))-\psi(th(r)),\]
while 
\[t w_h(\psi)(r)=t( w\circ \psi-\psi \circ h)(r)= w (\psi(tr))-\psi(h(tr))=
w (\psi(tr))-\psi(th(r))- \psi(rh(t))\,.  \]
Then, we prove that $(h,w_h)$ extends $(h,u)$, i.e., $\beta g u=w_h\beta g$. For every $r\in R$ and every $m\in M$, we have
\[\begin{split}
(w_h\beta g(m))(r)&= w(\beta g(m)(r)) -  \beta g(m)(h(r))= w(q(rg(m))-q(h(r)g(m))\\
&=w(q(g(rm))-q(g(h(r)m))=qgu(rm)-q(g(h(r)m))\\
&=qg(ru(m))+qg(h(r)m)-q(g(h(r)m))=qg(ru(m))=\beta g(u(m))(r)\,.\end{split}
\]
Finally, it is sufficient to take $(h,v)=(h, \gamma w_h\beta) \in D_A(R,J)$. Item (1) is now clear and item (2) follows by 
considering the injective morphism $0\to J$.
\end{proof}

\begin{corollary}\label{cor.liftingproperty} Let $0\to K\xrightarrow{\alpha}P\xrightarrow{\beta}M\to 0$ be a short exact sequence of projective $R$-modules, and denote by 
\[ L=D_A(R,K\xrightarrow{\alpha}P)=D_A(R,P\xrightarrow{\beta}M)=\{(h,u)\in D_A(R,P)\mid u(\alpha(K))\subset \alpha(K)\}\,.\]
Then, the natural morphisms of Lie algebras
\[L=D_A(R,K\xrightarrow{\alpha}P)\to  D_A(R,K),\quad L=D_A(R,K\xrightarrow{\alpha}P)\to  D_A(R,P),\]
\[  D_A(R,P\xrightarrow{\beta}M)\to D_A(R,M),\] fit in the following 
 exact sequences of $R$-modules
\[ 0\to L\to  D_A(R,P)\xrightarrow{p} \Hom_R(K,M)\to 0,\qquad
 0\to \Hom_R(P,K)\xrightarrow{j} L\to D_A(R,M)\to 0,\]
\[   0\to \Hom_R(M,P)\xrightarrow{h} L\to D_A(R,K)\to 0,\]
where $p(h,u)=\beta u\alpha$,  $j(v)=(0,\alpha v)$, $h(u)=(0,u\beta)$.
\end{corollary}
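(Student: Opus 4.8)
The plan is to treat the three short exact sequences one at a time, in each case producing the relevant morphism explicitly, checking exactness at the middle term by a direct diagram chase, and then using the previously established lifting/extension lemmas to get surjectivity on the right. Throughout I will freely use that $K$, $P$ and $M$ are all projective (since $0\to K\to P\to M\to 0$ splits, $K$ is a direct summand of $P$ and $M\cong P/K$ is projective too), so every ``lift'' or ``split'' I ask for exists. I will also identify $K$ with its image $\alpha(K)\subset P$ when convenient, so that $L$ is literally the stabiliser $\{(h,u)\in D_A(R,P)\mid u(K)\subset K\}$, and the two descriptions of $L$ as $D_A(R,K\to P)$ and $D_A(R,P\to M)$ coincide by definition of $D_A(R,M_\bullet)$ for a morphism $M_\bullet$ (this is the construction recalled just before Lemma \ref{lem.tensor_e_hom}).

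First I would handle $0\to L\to D_A(R,P)\xrightarrow{p}\Hom_R(K,M)\to 0$ with $p(h,u)=\beta u\alpha$. That $p$ is $R$-linear and well defined follows because $\beta u\alpha$ is a difference of $A$-linear maps whose non-$R$-linear parts cancel (exactly as in the proof of Lemma \ref{lem.tensor_e_hom}: $\beta(u\alpha(rk))-r\beta u\alpha(k)=\beta(h(r)\alpha(k))=0$ since $h(r)\alpha(k)\in\alpha(K)=\ker\beta$). Exactness at $D_A(R,P)$ is immediate: $p(h,u)=0$ means $u\alpha(K)\subset\ker\beta=\alpha(K)$, i.e. $(h,u)\in L$. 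For surjectivity, given $g\in\Hom_R(K,M)$ lift it along $\beta$ to some $R$-linear $\tilde g\colon K\to P$ with $\beta\tilde g=g$ (possible since $K$ is projective), extend $\tilde g$ to an $R$-linear $\hat g\colon P\to P$ using the splitting of $\alpha$, and set $u=v+\hat g$ where $(h,v)\in D_A(R,P)$ is any lift of $h=0$, e.g. $(0,0)$; then $(0,\hat g)\in D_A(R,P)$ and $p(0,\hat g)=\beta\hat g\alpha=\beta\tilde g=g$.

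For the second sequence $0\to\Hom_R(P,K)\xrightarrow{j}L\to D_A(R,M)\to 0$ with $j(v)=(0,\alpha v)$, I note $j$ is injective because $\alpha$ is; its image is $\{(0,u)\in L\mid u(P)\subset\alpha(K)\}$. The map $L\to D_A(R,M)$ is the one sending $(h,u)$ to the induced derivation of the pair $(R,M)$ (well defined precisely because $u$ stabilises $K=\ker\beta$), and its kernel is the set of $(h,u)\in L$ inducing $0$ on $M$, which forces $h=0$ (look at the action on scalars, $M\ne 0$ in the interesting case — or note $h$ is the induced derivation on $M$ composed with projection) and $u(P)\subset K$, i.e. exactly the image of $j$; the subtle point $h=0$ should be dispatched by observing that $(0,u_K)\in D_A(R,K)$ where $u_K=u|_K$, and if the induced map on $M$ vanishes then $h=0$ directly from $u(rp)-ru(p)=h(r)p$ read modulo $K$. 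Surjectivity of $L\to D_A(R,M)$ is precisely Lemma \ref{lem.liftingproperty} applied to the surjection $\beta\colon P\to M$ with $P$ projective: any $(h,\bar u)\in D_A(R,M)$ lifts to $(h,u)\in D_A(R,P)$ with $\beta u=\bar u\beta$, and such a $u$ automatically satisfies $u(\ker\beta)\subset\ker\beta$, hence $(h,u)\in L$.

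The third sequence $0\to\Hom_R(M,P)\xrightarrow{h}L\to D_A(R,K)\to 0$ with $h(u)=(0,u\beta)$ is dual in flavour: $h$ is injective since $\beta$ is surjective, its image is $\{(0,u')\in L\mid u'|_{\alpha(K)}=0\}=\{(0,u')\mid u'(P)\subset?\}$ — more precisely $\{(0,u')\in D_A(R,P)\mid u'\alpha=0\}$, which lands in $L$ because such $u'$ kills $K$ hence a fortiori stabilises it. The map $L\to D_A(R,K)$ sends $(h,u)$ to $(h,u|_{\alpha(K)})$, well defined since $(h,u)\in L$; its kernel is $\{(h,u)\in L\mid u|_K=0\}$, which forces $h=0$ (again from $u(rk)-ru(k)=h(r)k$ with $k$ ranging over $K$, using that $K$ is faithful — here one needs $K$ to have zero annihilator, which holds when $K\ne 0$ is a submodule of the projective, hence torsion-free-ish, module $P$; if $K=0$ the statement is trivial), and $u\alpha=0$, i.e. the image of $h$. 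Surjectivity uses Lemma \ref{lem.liftingproperty} once more: given $(h,w)\in D_A(R,K)$, since $K$ is projective the anchor map $D_A(R,K)\to\Der_A(R,R)$ is surjective, but I need more — I want to extend $w$ itself to $P$. Here is where I would use the splitting: write $P=\alpha(K)\oplus Q$ with $Q\cong M$ projective, pick any $(h,w_Q)\in D_A(R,Q)$ lifting $h$ (exists by Lemma \ref{lem.liftingproperty}(2)), and set $u=w\oplus w_Q$ on $P=\alpha(K)\oplus Q$; one checks $(h,u)\in D_A(R,P)$ — the pair condition is additive over the direct sum decomposition because the scalar action on $P$ respects it — and clearly $u$ stabilises $\alpha(K)$ and restricts to $w$ there, so $(h,u)\in L$ maps to $(h,w)$.

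The routine but slightly delicate points, where I'd be most careful, are (i) verifying that each displayed map is genuinely $R$-linear despite being built from maps that are only $A$-linear — this always comes down to the cancellation already exploited in Lemma \ref{lem.tensor_e_hom} and Lemma \ref{lem.esattapercoerenza}; and (ii) the recurring claim that the ``$h$-component'' is forced to vanish on the kernel of each right-hand map, for which the clean argument is the faithfulness of the relevant module ($M$, resp. $K$) together with the defining identity $u(rx)-ru(x)=h(r)x$ — and if that module happens to be zero the whole corollary degenerates to something trivial. The main conceptual obstacle, such as it is, is organisational: recognising that all three surjectivity statements are instances of the lifting Lemma \ref{lem.liftingproperty} once the split exact sequence is used to realise $K$, $P$, $M$ as compatible direct summands, after which everything is a bookkeeping exercise on $2\times 2$ block matrices of $A$-linear maps.
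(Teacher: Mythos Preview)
Your argument is correct and, for the first two exact sequences, essentially identical to the paper's: the paper also gets surjectivity of $p$ from the splitting $P\simeq K\oplus M$ (so $\Hom_R(P,P)\to\Hom_R(K,M)$ is onto), and surjectivity of $L\to D_A(R,M)$ directly from Lemma~\ref{lem.liftingproperty}.

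Two remarks. First, your worry about forcing $h=0$ in the kernels is misplaced: the natural maps $L\to D_A(R,M)$ and $L\to D_A(R,K)$ send $(h,u)$ to $(h,\bar u)$ and $(h,u|_K)$ respectively, so the $h$-component is carried along and vanishes automatically on the kernel---no faithfulness argument is needed, and the case $K=0$ or $M=0$ requires no separate treatment. Second, for the third sequence the paper takes a slightly different route: rather than your explicit splitting $P=K\oplus Q$ and the choice of an auxiliary $(h,w_Q)$, it observes that the anchor map $L\to\Der_A(R,R)$ is surjective (since $L\to D_A(R,M)$ is, and the latter surjects onto $\Der_A(R,R)$), and then applies the snake lemma to the diagram
\[ \xymatrix{0\ar[r]&S\ar[d]\ar[r]&L\ar[d]\ar[r]&\Der_A(R,R)\ar@{=}[d]\ar[r]&0\\
0\ar[r]&\Hom_R(K,K)\ar[r]&D_A(R,K)\ar[r]&\Der_A(R,R)\ar[r]&0}\]
with $S=\{f\in\Hom_R(P,P)\mid f(K)\subset K\}$. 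Your direct construction is equally valid and perhaps more transparent; the snake lemma packaging is a little slicker and makes the role of the anchor map explicit.
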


\begin{proof} Since $\beta\alpha=0$, the morphism $p$ is properly defined and it is surjective because  $M$ is projective,   $P\simeq K\oplus M$ and therefore
\[\Hom_R(P,P)\to \Hom_R(K,M),\qquad u\mapsto \beta u\alpha,\]
is surjective. The surjectivity of $L\to D_A(R,M)$ is given by Lemma~\ref{lem.liftingproperty} and therefore also  the anchor map  $\alpha\colon L\to \Der_A(R,R)$ is surjective.
The third exact sequence follows by the snake lemma applied to the commutative diagram 
\[ \xymatrix{0\ar[r]&S\ar[d]\ar[r]&L\ar[d]\ar[r]&\Der_A(R,R)\ar@{=}[d]\ar[r]&0\\
0\ar[r]&\Hom_R(K,K)\ar[r]&D_A(R,K)\ar[r]&\Der_A(R,R)\ar[r]&0}\]
where $S=\{f\in \Hom_R(P,P)\mid f(K)\subset K\}$. 
\end{proof}

Consider now a morphism of commutative unitary rings $A\to R$ and a cochain complex $C=\{\cdots\to C^i\xrightarrow{d}C^{i+1}\to\cdots\}$ of $R$ modules. Then, we can define $D_A(R,C)$ as in 
Definition~\ref{def.derivationspairs},  by replacing $\Hom_A(M,M)$ with the module of morphisms of cochain complexes of $A$-modules; equivalently $D_A(R,C)$ is defined considering $C$ as a diagram of $R$-modules over the ordered set $\Z$.

However, for the application we have in mind, it is more convenient to consider  
the DG-Lie subalgebra 
\[ D_A^*(R,C)=\left\{(h,u)\in \Der_A^*(R,R)\times \Hom_A^*(C,C)\;\middle|\;
\begin{array}{l}
 u(rx)-ru(x)=h(r)x,\\
 \text{for every } r\in R,\; x\in C\end{array}\right\}\,.\]
For the definition and the main properties of $\Der_A^*(R,R)$ and $\Hom_A^*(C,C)$ the reader may consult 
e.g., \cite[Section 1]{semireg2011}.
Notice that the differential is the internal derivation 
$\delta=[d,-]$ and $\Der^i_A(R,R)=0$ for every $i\not=0$: this implies that
$D^i_A(R,C)=\Hom_R^i(C,C)$ for every $i\not=0$, and $D^0_A(R,C)=\prod^{\times}D_A(R,C^i)$ is the limit of the diagram of anchor maps 
$D_A(R,C^i)\to \Der_A(R,R)$. Finally, note that $D_A(R,C)=Z^0(D^*_A(R,C))$.

In order to extend Lemma~\ref{lem.liftingproperty} and Corollary~\ref{cor.liftingproperty} to the differential graded case it is useful to work in the projective model structure on the category of unbounded cochain complexes \cite[Theorem 2.3.11]{Hov99}: with respect to this model structure,  a  morphism of cochain complexes $M\to N$ is a weak-equivalence if it is a quasi-isomorphism, it is a fibration if it is degreewise surjective, it is a cofibration if it 
has the left lifting property with respect to all the 
trivial fibrations.  Moreover, if $M\to N$ is a cofibration, then for every $i$ the map $M^i\to N^i$ is split injective with projective cokernel; the converse holds whenever $N$ is bounded above.

In particular, for every cofibrant complex  $P$, since $P^i$ is projective for every $i$, there exists a short exact sequence
\begin{equation}\label{equ.anchorcofibrant} 
0\to \Hom^*_R(P,P)\to D^*_A(R,P)\to \Der_A(R,R)\to 0\,.
\end{equation}

\begin{proposition}\label{prop.trivialcofib} Let $f\colon K\to P$ be a trivial cofibration between cofibrant complexes of $R$-modules. 
Then, the two natural DG-Lie algebra morphisms 
\[ D_A^*(R,K)\xleftarrow{\quad}D_A^*(R,K\xrightarrow{f}P)\xrightarrow{\quad}D_A^*(R,P)\]
are quasi-isomorphisms of complexes of $R$-modules. In particular, $D_A^*(R,K)$ and $D_A^*(R,P)$ are quasi-isomorphic DG-Lie algebras.
\end{proposition}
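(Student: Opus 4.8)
The plan is to reduce the statement about the relative object $D_A^*(R,K\xrightarrow{f}P)$ to the two short exact sequences of the form \eqref{equ.anchorcofibrant} together with a comparison of kernels. Concretely, for any morphism $f\colon K\to P$ of cochain complexes there is, by the discussion preceding the proposition, a commutative diagram of complexes of $R$-modules
\[
\begin{matrix}\xymatrix{
0\ar[r]&\Hom_R^*(K\xrightarrow{f}P,K\xrightarrow{f}P)\ar[d]\ar[r]&D_A^*(R,K\xrightarrow{f}P)\ar[d]\ar[r]&\Der_A(R,R)\ar@{=}[d]\ar[r]&0\\
0\ar[r]&\Hom_R^*(K,K)\ar[r]&D_A^*(R,K)\ar[r]&\Der_A(R,R)\ar[r]&0
}\end{matrix}
\]
where $\Hom_R^*(K\xrightarrow{f}P,K\xrightarrow{f}P)$ denotes the DG-Lie algebra of pairs $(a,b)$ of $R$-linear cochain endomorphisms of $K$ and $P$ with $fa=bf$, and the left vertical arrow is the projection $(a,b)\mapsto a$; the bottom row is \eqref{equ.anchorcofibrant} for $K$ (valid since $K$ is cofibrant), and the top row is exact because a lift of a given $h\in\Der_A(R,R)$ to $D_A^*(R,K\xrightarrow{f}P)$ always exists — one lifts $h$ to $D_A^*(R,P)$ using cofibrancy of $P$, and then lifts it to the relative object using the extension property along the trivial cofibration $f$ (this is the step that really uses that $f$ is a trivial cofibration, via the analogue of Lemma~\ref{lem.liftingproperty}/Corollary~\ref{cor.liftingproperty} in the model-categorical setting). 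By the five lemma (or the snake lemma) applied to this diagram, the map $D_A^*(R,K\xrightarrow{f}P)\to D_A^*(R,K)$ is a quasi-isomorphism once we know the left vertical arrow is a quasi-isomorphism. The symmetric diagram with $P$ in place of $K$ handles the other morphism.

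So the crux is: the projection $\Hom_R^*(K\xrightarrow{f}P,K\xrightarrow{f}P)\to \Hom_R^*(K,K)$, $(a,b)\mapsto a$, and likewise the projection to $\Hom_R^*(P,P)$, $(a,b)\mapsto b$, are quasi-isomorphisms whenever $f\colon K\to P$ is a trivial cofibration of cofibrant complexes. The kernel of the first projection is $\Hom_R^*(P,\coker f)$-ish — more precisely, writing $C=\coker f$ (a cofibrant, hence degreewise projective, complex that is acyclic since $f$ is a quasi-isomorphism of degreewise-split monomorphisms, so $P\simeq K\oplus C$ degreewise), the kernel consists of pairs $(0,b)$ with $bf=0$, i.e. $b$ factoring through $C$, together with the compatibility that $b$ preserves… — in any case one identifies the kernel with $\Hom_R^*(C, P)$ up to the splitting, and this complex is acyclic because $C$ is a bounded-above (or at least cofibrant) acyclic complex of projectives, hence contractible, so $\Hom_R^*(C,P)$ is contractible. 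That acyclicity of the kernel, plus surjectivity of the projection onto $\Hom_R^*(K,K)$ (again by the lifting property along the trivial cofibration $f$, now for $R$-linear maps rather than $A$-derivations), gives the quasi-isomorphism via the long exact sequence in cohomology.

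The step I expect to be the main obstacle is the careful treatment of the kernel $\Hom_R^*(K\xrightarrow{f}P,K\xrightarrow{f}P)\to \Hom_R^*(K,K)$ and its acyclicity in the \emph{unbounded} setting: one cannot simply invoke "acyclic bounded-above complex of projectives is contractible", and must instead use that a trivial cofibration between cofibrant objects is in particular a strong deformation retract of complexes (its cokernel $C$ is cofibrant and acyclic, hence a contractible complex in the projective model structure), so that $f$ admits an $R$-linear chain homotopy inverse and the associated mapping-cone-type complex governing the kernel is contractible. Once the correct statement — trivial cofibration $\Rightarrow$ $R$-linear chain homotopy equivalence with contractible cokernel — is in hand, both the surjectivity and the acyclicity of the kernel are formal, and the five lemma finishes the proof. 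A secondary, purely bookkeeping obstacle is checking that all the maps in the big diagram are genuinely DG-Lie algebra morphisms (not merely maps of complexes), but for the quasi-isomorphism statement only the underlying complex structure matters, so this can be dispatched with the remark that the bracket is inherited from $\Der_A^*(R,R)\times\Hom_A^*(-,-)$.
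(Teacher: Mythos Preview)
Your proposal is correct and lands on the same key computation as the paper, but via an unnecessary detour. The paper applies the degreewise version of Corollary~\ref{cor.liftingproperty} directly to obtain the two short exact sequences
\[ 0\to D_A^*(R,K\xrightarrow{f}P)\to D_A^*(R,P)\to \Hom_R^*(K,M)\to 0,\]
\[ 0\to \Hom_R^*(M,P)\to D_A^*(R,K\xrightarrow{f}P)\to D_A^*(R,K)\to 0,\]
with $M=\coker f$, and then argues that $M$, being cofibrant and acyclic, is contractible (the map $M\to\operatorname{Cone}(\Id_M)$ is a trivial cofibration, hence splits), so both $\Hom_R^*(K,M)$ and $\Hom_R^*(M,P)$ are contractible. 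Your reduction via the anchor-map diagram to the $\Hom_R^*$ kernels is valid but redundant: the exact sequences of Corollary~\ref{cor.liftingproperty} already live at the $D_A^*$ level, so there is no need to peel off $\Der_A(R,R)$ first and then run a five-lemma argument.

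One small mislocation of hypotheses: the surjectivity of the anchor map $D_A^*(R,K\xrightarrow{f}P)\to\Der_A(R,R)$ (and likewise the surjectivity of $\Hom_R^*(K\xrightarrow{f}P,K\xrightarrow{f}P)\to\Hom_R^*(K,K)$) requires only that $K^i,P^i,M^i$ be projective for every $i$, i.e., that $f$ be a cofibration between cofibrant objects; the word \emph{trivial} is not used there. The quasi-isomorphism hypothesis enters exactly once, in the contractibility of $M$, which is what your final paragraph correctly isolates as the crux.
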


\begin{proof} By assumption  $f$ is a cofibration and then   
we have a short exact sequence of cofibrant complexes
$0\to K\xrightarrow{f}P\to M\to 0$.  
Since $K^i,P^i,M^i$ are projective modules for every $i$, Corollary~\ref{cor.liftingproperty} gives two short exact sequences of complexes
\[ 0\to D_A^*(R,K\xrightarrow{f}P)\to  D_A^*(R,P)\to \Hom_R^*(K,M)\to 0,\]
\[ 0\to \Hom_R^*(M,P)\to D_A^*(R,K\xrightarrow{f}P)\to D_A^*(R,K)\to 0\,.\]
Finally, since $M$ is cofibrant acyclic, the natural map $M\to  
\operatorname{Cone}(\Id_M)$ is a trivial cofibration  and then admits a left inverse; hence the 
complexes $M$, $\Hom_R^*(M,P)$ and $\Hom_R^*(K,M)$ are 
contractible.   
\end{proof}

\begin{corollary}\label{cor.homotpyinvariance} Let $P$ and $K$ be quasi-isomorphic cofibrant complexes of $R$-modules. Then, 
$D^*_A(R,P)$ is quasi-isomorphic to  $D^*_A(R,K)$ as a DG-Lie algebra.
\end{corollary}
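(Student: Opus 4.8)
The plan is to reduce an arbitrary quasi-isomorphism to the composite of a trivial cofibration, which is already treated in Proposition~\ref{prop.trivialcofib}, and a trivial fibration, which I would handle directly using that a cofibrant complex has the left lifting property with respect to it. First I would note that a cofibrant complex is homotopy projective, so two quasi-isomorphic cofibrant complexes are in fact linked by a genuine quasi-isomorphism $g\colon K\to P$; and since quasi-isomorphism of DG-Lie algebras is an equivalence relation it suffices to deal with a single such $g$. I then factor $g$ in the projective model structure as $K\xrightarrow{i}W\xrightarrow{p}P$ with $i$ a cofibration and $p$ a trivial fibration. By the two-out-of-three axiom $i$ is a trivial cofibration, and $W$ is cofibrant because $K$ is; hence Proposition~\ref{prop.trivialcofib} gives that $D^*_A(R,K)$ and $D^*_A(R,W)$ are quasi-isomorphic DG-Lie algebras, and it remains only to compare $D^*_A(R,W)$ with $D^*_A(R,P)$.

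To treat the trivial fibration $p\colon W\to P$, I would use that $P$ is cofibrant to obtain a section $s\colon P\to W$ with $ps=\Id_P$. Since both $s$ and the inclusion $Z:=\ker p\hookrightarrow W$ are morphisms of complexes, $W\cong P\oplus Z$ as a complex of $R$-modules; the long exact cohomology sequence of $0\to Z\to W\xrightarrow{p}P\to 0$ shows $Z$ is acyclic, and $Z$, being a retract of $W$, is cofibrant, hence contractible by the same argument used in the proof of Proposition~\ref{prop.trivialcofib}. In particular each $Z^i$ is projective, so Corollary~\ref{cor.liftingproperty} applies degreewise to the short exact sequence of complexes of projective modules $0\to Z\to W\xrightarrow{p}P\to 0$, and writing $L=D^*_A(R,Z\to W)=D^*_A(R,W\to P)$ it yields two short exact sequences of complexes of $R$-modules
$$0\to L\to D^*_A(R,W)\to \Hom^*_R(Z,P)\to 0,\qquad 0\to \Hom^*_R(W,Z)\to L\to D^*_A(R,P)\to 0,$$
whose outer maps are morphisms of DG-Lie algebras. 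Because $Z$ is contractible, the complexes $\Hom^*_R(Z,P)$ and $\Hom^*_R(W,Z)$ are contractible, hence acyclic, so the associated long exact cohomology sequences show that $L\to D^*_A(R,W)$ and $L\to D^*_A(R,P)$ are quasi-isomorphisms. Thus $D^*_A(R,W)$ and $D^*_A(R,P)$ are quasi-isomorphic DG-Lie algebras, and combined with the first step this proves that $D^*_A(R,K)$ and $D^*_A(R,P)$ are quasi-isomorphic as DG-Lie algebras.

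I expect the only real obstacle to be the fibration half: in the unbounded projective model structure one cannot in general realise $g$ itself, or the section $s$, as a cofibration (a degreewise split monomorphism with projective cokernel need not be a cofibration), so Proposition~\ref{prop.trivialcofib} cannot simply be reused, and one must route the argument through Corollary~\ref{cor.liftingproperty}, which needs only degreewise projectivity and exactness. Everything else — the splitting $W\cong P\oplus Z$, the cofibrancy and contractibility of $Z$, and the exactness and DG-Lie compatibility of the two displayed sequences — is the same bookkeeping already carried out in the proof of Proposition~\ref{prop.trivialcofib}.
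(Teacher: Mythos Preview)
Your argument is correct. The paper's proof is shorter: it cites the general model-category fact that two weakly equivalent cofibrant objects are connected by a span of trivial cofibrations $P\to Q\leftarrow K$, and then applies Proposition~\ref{prop.trivialcofib} twice, with no separate treatment of fibrations. Your route---produce a direct quasi-isomorphism, factor it as $K\to W\to P$, and handle the trivial-fibration half by rerunning the Corollary~\ref{cor.liftingproperty} argument with the acyclic piece sitting in the kernel rather than the cokernel---is more self-contained and avoids invoking the span lemma, at the cost of essentially reproving Proposition~\ref{prop.trivialcofib} in mirror image. One remark on your final paragraph: the section $s\colon P\to W$ \emph{is} in fact a trivial cofibration. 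Under your splitting $W\cong P\oplus Z$ the map $s$ is the coproduct inclusion, hence a pushout of the cofibration $0\to Z$ (you already observed $Z$ is cofibrant as a retract of $W$), and therefore itself a cofibration; since it is also a quasi-isomorphism, Proposition~\ref{prop.trivialcofib} applies to $s$ directly and your separate fibration argument can be dropped.
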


\begin{proof} By general facts of model category theory, see e.g. \cite[Lemma 1.1.12]{Hov99}, 
two cofibrant complexes $P$ and $K$ are quasi-isomorphic if and only if there exists a span of trivial cofibrations $P\to Q\leftarrow K$. Therefore, the conclusion follows from Proposition~\ref{prop.trivialcofib}.
\end{proof}

\begin{example}\label{example successione esatta comologia per moduli}
 If $P\to M$ is a projective resolution of an $R$-module $M$, then Corollary~\ref{cor.homotpyinvariance} implies that the graded Lie algebra $H^*(D^*_A(R,P))$ depends only on $M$. Moreover, 
by \eqref{equ.anchorcofibrant} we have  
$H^i(D_A^*(R,P))=\Ext_R^i(M,M)$ for every $i\not=0,1$ and, 
by \eqref{equ.basicexactsequence} of Lemma \ref{lem.basicexactsequence}, there exists an exact sequence
\[ 0\to \Ext_R^0(M,M)\to H^0(D_A^*(R,P))\to \Der_A(R,R)\to 
\Ext_R^1(M,M)\to H^1(D_A^*(R,P))\to 0\,.\]
In fact $\Ext_R^i(M,M)=H^i(\Hom_R^*(P,M))=H^i(\Hom_R^*(P,P))$ for every $i$. 
By Lemma~\ref{lem.liftingproperty}, the natural map $Z^0(D_A^*(R,P))\to D_A(R,M)$ is surjective; its kernel is given by the $R$-linear morphisms of complexes $P\to P$ inducing the trivial map on $M$. Since $P$ is a projective resolution, these morphisms are exactly the ones homotopic to $0$, and then there exists a natural isomorphism 
$H^0(D_A^*(R,P))=D_A(R,M)$. Since $\Ext^i_R(M,M)=0$ for $i<0$, the above equality is completely 
equivalent to the long exact sequence
\[ \cdots\to \Hom^{-2}_R(P,P)\xrightarrow{\delta}\Hom^{-1}_R(P,P)\xrightarrow{\delta}
D_A(R,P)\to D_A(R,M)\to 0\,.\]
\end{example}

\bigskip
\section{Anchor invariance of Fitting ideals} 

The content of this section is not relevant for the remaining part of the paper and it is written as an application of the previous results that  we consider of independent interest.

Let $A\to R$ be a morphism of unitary commutative rings and $M$ an $R$-module.
We have seen that, unless $M$ is either projective or injective, the anchor map $D_A(R,M)\xrightarrow{\; \alpha\; } \Der_A(R,R)$ is generally not surjective.

\begin{theorem}\label{thm.fittinginvariance} 
In the above setup, if  $M$ is finitely generated, then 
for every $(h,u)\in D_A(R,M)$ the derivation  $h$ preserves the Fitting ideals of $M$.
\end{theorem}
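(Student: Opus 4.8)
The plan is to reduce the statement to an explicit matrix computation on a free presentation of $M$ that lifts together with the derivation, and then to linearize the resulting identity over the dual numbers.

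First I would build a \emph{liftable} presentation. Since $M$ is finitely generated there is a surjection $f\colon R^n\to M$ with $n$ finite; by Lemma~\ref{lem.liftingproperty} the derivation of pairs $(h,u)$ lifts to some $(h,v)\in D_A(R,R^n)$ with $fv=uf$. Then $v$ preserves $K=\ker f$ (if $f(p)=0$ then $f(v(p))=u(f(p))=0$), so $(h,v|_K)\in D_A(R,K)$; choosing a surjection $g\colon R^{(J)}\to K$ from a free module, of possibly infinite rank since $K$ need not be finitely generated, and applying Lemma~\ref{lem.liftingproperty} once more, we lift $(h,v|_K)$ to $(h,w)\in D_A(R,R^{(J)})$ with $gw=(v|_K)g$. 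Composing $g$ with $K\hookrightarrow R^n$ yields $\varphi\colon R^{(J)}\to R^n$ with $\Image\varphi=K$, hence $\coker\varphi\cong M$, together with a commutative square $v\varphi=\varphi w$. Fixing bases of $R^n$ and $R^{(J)}$, write $\Phi$ for the (column-finite) matrix of $\varphi$; by the standard theory of Fitting ideals $\Fitt_k(M)$ is the ideal $I_{n-k}(\Phi)$ generated by the $(n-k)\times(n-k)$ minors of $\Phi$, and this is what must be shown to be $h$-stable.

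Next I would extract the key identity. Writing $v(e_i)=\sum_k a_{ki}e_k$ and $w(f_j)=\sum_l b_{lj}f_l$ on the chosen bases, expanding $v(\varphi(f_j))=\varphi(w(f_j))$ by means of $v(rx)=rv(x)+h(r)x$ and comparing coefficients gives, in matrix form,
\[ h(\Phi)=\Phi b-a\Phi\,,\]
where $h$ is applied entrywise and $a=(a_{ki})$, $b=(b_{lj})$ (only $v\varphi=\varphi w$ is used in this computation). Now pass to the dual numbers $R[\epsilon]$, $\epsilon^2=0$, and consider $\sigma\colon R\to R[\epsilon]$, $\sigma(r)=r+\epsilon h(r)$, which is a ring homomorphism because $h$ is a derivation. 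Applying $\sigma$ entrywise and using the identity,
\[ \sigma(\Phi)=\Phi+\epsilon h(\Phi)=(\mathbf 1-\epsilon a)\,\Phi\,(\mathbf 1+\epsilon b)\,,\]
and the outer factors are invertible over $R[\epsilon]$ (inverses $\mathbf 1+\epsilon a$ and $\mathbf 1-\epsilon b$), so they do not alter the ideal of $(n-k)\times(n-k)$ minors: $I_{n-k}(\sigma(\Phi))=I_{n-k}(\Phi)$ as ideals of $R[\epsilon]$. On one side $I_{n-k}(\Phi)=\Fitt_k(M)\oplus\epsilon\,\Fitt_k(M)$ as an $R$-module; on the other, since $\sigma$ is a ring map and a minor is a polynomial in the entries, $I_{n-k}(\sigma(\Phi))$ is generated by the elements $m+\epsilon h(m)$ with $m$ running over the minors of $\Phi$. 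Matching $\epsilon$-components forces $h(m)\in\Fitt_k(M)$ for each such $m$; since these minors generate $\Fitt_k(M)$ and $h$ is a derivation, $h(\Fitt_k(M))\subseteq\Fitt_k(M)$. The degenerate cases $n-k\le 0$ or $n-k$ greater than the number of columns of $\Phi$, where the Fitting ideal is $R$ or $0$, are trivial.

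The step I expect to demand the most care is the very first one: the anchor map $D_A(R,M)\to\Der_A(R,R)$ is in general not surjective — this is exactly the phenomenon the present section is about — so one cannot simply lift $h$ to a presentation of $M$. It is the datum $(h,u)\in D_A(R,M)$ that makes the two successive applications of Lemma~\ref{lem.liftingproperty} legitimate and produces the commutative square $v\varphi=\varphi w$, which is the real substance of the argument. A secondary, purely technical point is that $M$ need not be finitely presented, so $\Phi$ may have infinitely many columns and $b$ is an infinite column-finite matrix; this is harmless, since determinants and the ideals $I_{n-k}$ only ever involve finitely many rows and columns at a time, and $\mathbf 1\pm\epsilon b$ still possesses a column-finite inverse.
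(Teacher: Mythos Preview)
Your argument is correct. The first half---lifting $(h,u)$ to a commuting square over a free presentation via two applications of Lemma~\ref{lem.liftingproperty}, and extracting the identity $h(\Phi)=\Phi b-a\Phi$---is exactly what the paper does as well (the paper writes it in element form rather than matrix form). The divergence is in the second half. The paper does not pass to dual numbers; instead it treats the ideal $\Fitt_{m-1}(M)$ of entries of $\Phi$ by a direct computation, and then handles $\Fitt_{m-r}(M)$ for $r>1$ by replacing $\varphi$ with $\varphi^{\wedge r}\colon \wedge^r_R R^{(J)}\to\wedge^r_R R^n$ and $(h,v),(h,w)$ with their Leibniz extensions (Definition~\ref{def.leibnizextension}), thereby reducing every case to the $1\times 1$ minor argument. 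Your route---encode $h$ as the ring map $\sigma=1+\epsilon h$ to $R[\epsilon]$, factor $\sigma(\Phi)$ through invertible matrices, and invoke the invariance of $I_{n-k}$ under invertible row/column operations---is a genuinely different and somewhat more self-contained device: it handles all the Fitting ideals simultaneously and does not rely on the exterior-power machinery built earlier in the paper. Conversely, the paper's approach has the virtue of using that machinery, so the proof there is just a couple of lines once the Leibniz extension has been set up. Your remarks on the infinite-rank case are accurate and match the level of generality the paper allows for $F$.
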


\begin{proof} Recall (see \cite{eisenbud,Teissier}) that the Fitting ideals of $M$, $\Fitt_0(M)\subset \Fitt_1(M)\subset\cdots\subset R$, are defined by  considering any free resolution 
\[ F\xrightarrow{\;f\;}R^m\xrightarrow{\;p\;}M\to 0\]
and the exterior powers $\wedge_R^iF\xrightarrow{\;f^{\wedge i}\;}\wedge_R^iR^m$.
Then, for every $i\ge 0$, the Fitting ideal 
$\Fitt_{m-i}(M)\subset R$ is the ideal 
generated by the coefficients of all elements in the image of $f^{\wedge i}$, with respect to the canonical basis of $\wedge_R^iR^m$. The definition is independent of the choice of the resolution and then 
$\Fitt_m(M)=R$ whenever $M$ is generated by $m$ elements.  

By Lemma~\ref{lem.liftingproperty}, the derivation of pair 
$(h,u)$ lifts to a couple $(h,v)\in D_A(R,R^m)$ and
$(h,w)\in D_A(R,F)$ such that $up=pv$ and $fw=vf$.
Let us prove first that the ideal $I=\Fitt_{m-1}(M)$ is preserved by $h$. Let 
$e_1,\ldots,e_m$ be the canonical basis of $R^m$.  For every $x\in F$, we have 
\[ f(x)=\sum a_i e_i,\qquad f(w(x))=\sum b_i e_i,\qquad a_i,b_i\in I,\]
and then 
\[ \sum h(a_i)e_i=v(f(x))-\sum a_i v(e_i)=\sum b_i e_i -\sum a_i v(e_i)\in I\cdot R^m\,,\]
proving that $h(a_i)\in I$ for every $i$. 
 As regard   the invariance of the  Fitting ideals $\Fitt_{m-r}(M)$ for  $r>1$, it is sufficient to repeat the above argument to the maps $f^{\wedge r}\colon \wedge_R^rF\to \wedge_R^rR^m$ and to the Leibniz extensions of 
$(h,v)$ and $(h,w)$.
\end{proof}

\begin{corollary} Let $A\to R$ be a morphism of noetherian rings and let $M$ and $N$ be two finitely generated $R$-modules. If a derivation $h\in \Der_A(R,R)$ lifts to $D_A(R,M)$ and to $D_A(R,N)$, then 
$h$ preserves the Fitting ideals of  $\Tor_i^R(M,N)$ and $\Ext^i_R(M,N)$.
\end{corollary}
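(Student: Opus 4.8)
The plan is to reduce the statement to Theorem~\ref{thm.fittinginvariance}. Since $R$ is noetherian and $M,N$ are finitely generated, every module $\Tor_i^R(M,N)$ and $\Ext^i_R(M,N)$ is finitely generated; hence by Theorem~\ref{thm.fittinginvariance} it suffices to show that, for each $i$, the derivation $h$ lifts to a derivation of the pair $(R,\Tor_i^R(M,N))$ and to a derivation of the pair $(R,\Ext^i_R(M,N))$, i.e., that there exist $A$-linear endomorphisms $w_i,w_i'$ with $(h,w_i)\in D_A(R,\Tor_i^R(M,N))$ and $(h,w_i')\in D_A(R,\Ext^i_R(M,N))$.

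First I would fix resolutions $P\to M$ and $Q\to N$ by finitely generated free $R$-modules, together with the hypothesised derivations of pairs $(h,u)\in D_A(R,M)$ and $(h,v)\in D_A(R,N)$. By Lemma~\ref{lem.liftingproperty} --- equivalently, by the surjectivity of $Z^0(D^*_A(R,P))\to D_A(R,M)$ observed in Example~\ref{example successione esatta comologia per moduli} --- the pair $(h,u)$ lifts to an element $(h,\tilde u)\in D_A(R,P)=Z^0(D^*_A(R,P))$; in particular $\tilde u$ commutes with the differential of $P$. In the same way $(h,v)$ lifts to $(h,\tilde v)\in D_A(R,Q)$ with $\tilde v$ a chain map.

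For the $\Tor$ modules I would then consider the total complex $C=\Tot(P\otimes_R Q)$, which is finitely generated free in each degree and whose homology computes $\Tor^R_\bullet(M,N)$. The cochain-complex version of the morphism $\Phi$ of Lemma~\ref{lem.tensor_e_hom}, applied to $\bigl((h,\tilde u),(h,\tilde v)\bigr)$ (which share the anchor $h$), produces $(h,\tilde u\otimes_R\Id_Q+\Id_P\otimes_R\tilde v)\in D_A(R,C)$; since $\tilde u$ and $\tilde v$ are chain maps, $\tilde u\otimes_R\Id_Q+\Id_P\otimes_R\tilde v$ is a morphism of complexes, hence it induces, in each homological degree, a map $w_i$ on $\Tor_i^R(M,N)$ with $(h,w_i)\in D_A(R,\Tor_i^R(M,N))$ (the identity $w_i(r\xi)-rw_i(\xi)=h(r)\xi$ on homology being inherited from the one valid at the level of cycles). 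For the $\Ext$ modules I would argue identically with $C'=\Hom_R^*(P,N)$, which is finitely generated in each degree and whose cohomology computes $\Ext^\bullet_R(M,N)$: the cochain-complex version of $\Psi$ of Lemma~\ref{lem.tensor_e_hom} applied to $\bigl((h,\tilde u),(h,v)\bigr)$ yields the morphism of complexes $(h,\ f\mapsto vf-f\tilde u)\in D_A(R,C')$, which descends to $(h,w_i')\in D_A(R,\Ext^i_R(M,N))$. Applying Theorem~\ref{thm.fittinginvariance} to each of these finitely generated modules then finishes the proof.

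The essentially unique point requiring care is the passage of the constructions of Lemma~\ref{lem.tensor_e_hom} to cochain complexes: one must check that $\tilde u\otimes_R\Id_Q+\Id_P\otimes_R\tilde v$ and $f\mapsto vf-f\tilde u$ are again morphisms of complexes, and that a derivation of a pair $(R,C)$ with $C$ a complex induces a derivation of the pair $(R,H^i(C))$ for every $i$. Both verifications are routine and rely exactly on the fact that $\tilde u,\tilde v$ were chosen to commute with the differentials; the noetherian hypothesis enters only to ensure that $\Tor_i^R(M,N)$ and $\Ext^i_R(M,N)$ are finitely generated, so that Theorem~\ref{thm.fittinginvariance} is applicable.
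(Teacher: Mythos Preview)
Your proposal is correct and follows essentially the same route as the paper: lift $h$ to a projective resolution via Lemma~\ref{lem.liftingproperty}, apply the morphisms $\Phi,\Psi$ of Lemma~\ref{lem.tensor_e_hom} to obtain a derivation of pairs on the relevant complex, pass to (co)homology, and invoke Theorem~\ref{thm.fittinginvariance}. The only cosmetic difference is that for $\Tor$ you use the total complex $\Tot(P\otimes_R Q)$ of two resolutions, whereas the paper resolves only $M$ and works with $P\otimes_R N$; both compute $\Tor^R_\bullet(M,N)$ and the argument goes through identically.
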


\begin{proof} Let $P\to M$ be a projective resolution, then $h$ lifts to an element of $D_A(R,P)$, and by Lemma~\ref{lem.tensor_e_hom} $h$ lifts also to 
$D_A(R,\Tor_i^R(M,N))$ and $D_A(R,\Ext^i_R(M,N))$.
\end{proof}

Joining Theorem~\ref{thm.fittinginvariance} and Example~\ref{ex.Liederivative} 
we get a new proof of the following classical result.

\begin{corollary}[R. Hart \cite{Hart}] Let $A$ be a commutative ring and $R$ a finitely generated commutative $A$-algebra. Then, every 
$A$-derivation of $R$ preserves the Fitting ideals of $\Omega_{R/A}$. 
\end{corollary}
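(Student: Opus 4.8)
The plan is to deduce this from Theorem~\ref{thm.fittinginvariance} together with Example~\ref{ex.Liederivative}. Let $h\in\Der_A(R,R)$ be an arbitrary $A$-derivation. By Example~\ref{ex.Liederivative} the Lie derivative construction provides a canonical element $(h,L_h)\in D_A(R,\Omega_{R/A})$, characterised by $L_h(dx)=d(h(x))$ for $x\in R$. Thus $h$ lifts to a derivation of the pair $(R,\Omega_{R/A})$.

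Next I would invoke the hypothesis that $R$ is a finitely generated commutative $A$-algebra: if $R=A[x_1,\dots,x_n]/I$ then $\Omega_{R/A}$ is generated as an $R$-module by the classes $dx_1,\dots,dx_n$, hence is a finitely generated $R$-module. Therefore Theorem~\ref{thm.fittinginvariance} applies to the $R$-module $M=\Omega_{R/A}$ and to the derivation of pair $(h,L_h)\in D_A(R,\Omega_{R/A})$, yielding immediately that $h$ preserves every Fitting ideal $\Fitt_j(\Omega_{R/A})\subset R$. Since $h$ was an arbitrary $A$-derivation of $R$, this proves the corollary.

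There is essentially no obstacle here: the two ingredients — the existence of the canonical Lie-derivative lift and the finite generation of the Kähler differentials — are both routine, and the substance of the statement is entirely contained in Theorem~\ref{thm.fittinginvariance}. The only point worth a word of care is that Theorem~\ref{thm.fittinginvariance} is stated for general (not necessarily noetherian) $R$, so no noetherian hypothesis is needed on $A$ or $R$ beyond finite generation of $R$ as an $A$-algebra, matching exactly the statement of R.~Hart's theorem.

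\begin{proof}
Let $h\in\Der_A(R,R)$. By Example~\ref{ex.Liederivative}, the Lie derivative $L_h$ gives a derivation of the pair $(h,L_h)\in D_A(R,\Omega_{R/A})$. Since $R$ is a finitely generated $A$-algebra, say generated by $r_1,\dots,r_n$, the module $\Omega_{R/A}$ is generated over $R$ by $dr_1,\dots,dr_n$, and in particular is finitely generated. Theorem~\ref{thm.fittinginvariance} then implies that $h$ preserves the Fitting ideals of $\Omega_{R/A}$.
\end{proof}
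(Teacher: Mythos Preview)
Your proof is correct and follows exactly the approach indicated in the paper, which simply states that the corollary is obtained by joining Theorem~\ref{thm.fittinginvariance} and Example~\ref{ex.Liederivative}. Your only addition is the explicit (and routine) observation that $\Omega_{R/A}$ is finitely generated when $R$ is a finitely generated $A$-algebra, which is needed to apply Theorem~\ref{thm.fittinginvariance}.
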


More generally, taking the Leibniz extension of the Lie derivative we also obtain that 
every 
$A$-derivation of $R$ preserves the Fitting ideals of $\wedge_R^n\Omega_{R/A}$ for every $n$. 
Notice also that the Leibniz extension of the Lie derivative commutes with the de Rham differential.

\begin{example} Assume that $R$ is a principal ideal domain and that $M$ is a finitely generated $R$-module. 
Then, a derivation $h\in \Der_A(R,R)$ lifts to $D_A(R,M)$ if and only if $h$ preserves the Fitting ideals of $M$.
In fact, we have a cyclic decomposition 
\[ M=\frac{R}{(a_1)}e_1\oplus\cdots\oplus\frac{R}{(a_n)}e_n,\qquad a_i|a_{i+1},\]
and therefore the Fitting ideals are 
\[(a_1a_2\cdots a_n)\subset \cdots (a_1a_2)\subset (a_1)\;.\]
Assume that $h$ preserves all the Fitting ideals, then  
$h(a_i)\in (a_i)$ for every $i$; this is clear if either $i=1$ or $a_i=0$, while for $i>1$ and $a_i\not=0$ we have 
\[ h(a_1\cdots a_{i-1})\in (a_1\cdots a_{i-1}),\qquad h(a_1\cdots a_i)\in (a_1\cdots a_i),\]
\[ h(a_1\cdots a_i)=h(a_1\cdots a_{i-1})a_i+a_1\cdots a_{i-1}h(a_i),\]
 and then 
\[ a_1\cdots a_{i-1}h(a_i)\in (a_1\cdots a_{i-1}a_i)\,.\]
Now the  map
\[ u\colon M\to M,\qquad u(\sum x_ie_i)=\sum h(x_i)e_i\]
gives a derivation of pair $(h,u)$. 
\end{example}

\bigskip
\section{Coherent sheaves of DG-Lie algebras and trace maps}\label{sezione definizione traccia}

Assume now that $X\to \Spec A$ is a scheme over $A$, and denote by 
\[\Theta_{X/A}=\HOM_{\Oh_X}(\Omega_{X/A},\Oh_X)=\DER_A(\Oh_X,\Oh_X)\] 
the relative tangent sheaf. Given 
a sheaf $\sF$ of $\Oh_X$-modules,  we can define $\sD_A(X,\sF)$ as the subsheaf of $\Theta_{X/A}\times \HOM_A(\sF,\sF)$ of all the elements satisfying the same condition of Definition~\ref{def.derivationspairs}. Similarly, for every morphism $f\colon \sF\to\sG$ of
sheaves of $\Oh_X$-modules we can define the sheaf 
$\sD_A(X,\sF\xrightarrow{f}\sG)$, for instance by the exact sequence:
\[ 0\to \sD_A(X,\sF\xrightarrow{f}\sG)\to \sD_A(X,\sF)\times_{\Theta_{X/A}}\sD_A(X,\sG)\xrightarrow{\Phi}\HOM_{\Oh_X}(\sF,\sG),\quad \Phi(h,u,v)=fu-vf\,.\]

If $\sF$ is coherent and 
$p\colon Y\to X$ is the affine morphism of schemes such that $p_*\Oh_Y=\Oh_X\oplus \sF$, see e.g., 
\cite[Exercise II.5.17]{Ha77}, then by Lemma~\ref{lem.esattapercoerenza} we have an exact sequence of sheaves of $\Oh_X$-modules
\begin{equation}\label{equ.esattaperfasci} 
0\to \sD_A(X,\sF)\to p_*\Theta_{Y/A}\to \HOM_{\Oh_X}(\sF,\Oh_X)\oplus \HOM_{\Oh_X}(\Omega_{X/A},\sF)\,.\end{equation}

\begin{proposition}\label{propo D sono fasci coerenti}
In the above situation, if $A$ is noetherian, $X$ of finite type over $A$ and $\sF$ and $\sG$ coherent sheaves, then 
also  $\sD_A(X,\sF)$, $\sD_A(X,\sG)$ and $\sD_A(X,\sF\xrightarrow{f}\sG)$ are coherent. For every open affine subset $U\subset X$, we have: 
\[ \sD_A(X,\sF)(U)\cong D_A(\Oh_X(U),\sF(U)),\qquad 
\sD_A(X,\sF\xrightarrow{f}\sG)(U) \cong D_A(\Oh_X(U),\sF(U)\xrightarrow{f}\sG(U))\,.\]
\end{proposition}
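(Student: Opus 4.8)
The plan is to reduce everything to the local, commutative-algebra statements already established, namely Lemma~\ref{lem.esattapercoerenza}, Lemma~\ref{lem.basicexactsequence} and the exact sequences of Section~\ref{sec.derivationpairs}, together with the compatibility of the sheaf $\HOM$ and $\DER$ functors with restriction to open affines. First I would prove the identification of sections on an affine open $U=\Spec R$, with $R=\Oh_X(U)$ and $M=\sF(U)$. Because $\sD_A(X,\sF)$ is defined as the subsheaf of $\Theta_{X/A}\times\HOM_A(\sF,\sF)$ cut out by the defining relation of Definition~\ref{def.derivationspairs}, taking sections over $U$ is left-exact and commutes with the fibre product and the kernel description; hence $\sD_A(X,\sF)(U)$ is the submodule of $\DER_A(\Oh_X,\Oh_X)(U)\times\HOM_A(\sF,\sF)(U)=\Der_A(R,R)\times\Hom_A(M,M)$ consisting of the pairs satisfying $u(rm)-ru(m)=h(r)m$, which is precisely $D_A(R,M)$ by definition. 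The same argument, run through the defining exact sequence for the arrow case, gives $\sD_A(X,\sF\xrightarrow{f}\sG)(U)\cong D_A(R,M\xrightarrow{f}N)$.

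Next I would establish coherence. The cleanest route is through the exact sequence~\eqref{equ.esattaperfasci}: since $\sF$ is coherent and $A$ noetherian with $X$ of finite type over $A$, the scheme $Y$ with $p_*\Oh_Y=\Oh_X\oplus\sF$ is of finite type over $A$, so $\Theta_{Y/A}$ is coherent on $Y$ and $p_*\Theta_{Y/A}$ is coherent on $X$ because $p$ is affine and finite type; likewise $\HOM_{\Oh_X}(\sF,\Oh_X)$ and $\HOM_{\Oh_X}(\Omega_{X/A},\sF)$ are coherent, being $\HOM$ sheaves between coherent sheaves over a noetherian scheme. Then $\sD_A(X,\sF)$, sitting as the kernel of a map of coherent sheaves in~\eqref{equ.esattaperfasci}, is coherent. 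The same conclusion for $\sD_A(X,\sG)$ is identical, and for $\sD_A(X,\sF\xrightarrow{f}\sG)$ it follows from its defining exact sequence, being the kernel of a morphism from the fibre product $\sD_A(X,\sF)\times_{\Theta_{X/A}}\sD_A(X,\sG)$ (coherent, as a kernel inside a product of coherent sheaves) to the coherent sheaf $\HOM_{\Oh_X}(\sF,\sG)$.

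I expect the main technical point to be verifying that the sheaf $p_*\Theta_{Y/A}$ really coincides, locally, with the term $\Der_A(R\oplus M,R\oplus M)$ appearing in Lemma~\ref{lem.esattapercoerenza}, i.e.\ that forming the relative tangent sheaf of the trivial-extension scheme commutes with restriction to affine opens and with the identification $p_*\Oh_Y|_U=R\oplus M$; this is where one must be careful that $\DER_A(\Oh_Y,\Oh_Y)(p^{-1}U)=\Der_A(R\oplus M,R\oplus M)$, which holds because $\Omega_{Y/A}$ is quasi-coherent and $p^{-1}U$ is affine. Once this compatibility is in place, the coherence of $\sD_A(X,\sF)$ and the computation of its sections over $U$ are two facets of the same local statement, and the arrow version $\sD_A(X,\sF\xrightarrow{f}\sG)$ follows formally by the same reasoning applied to its defining sequence, using that finite limits of coherent sheaves on a noetherian scheme are coherent and that restriction to an affine open is exact on quasi-coherent sheaves.
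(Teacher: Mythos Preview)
Your route via the exact sequence~\eqref{equ.esattaperfasci} is exactly the paper's, and your final paragraph correctly identifies that comparing~\eqref{equ.esattaperfasci} with Lemma~\ref{lem.esattapercoerenza} over an affine open yields both coherence and the computation of sections simultaneously. However, the direct argument in your first paragraph has a real gap: the asserted equality $\HOM_A(\sF,\sF)(U)=\Hom_A(M,M)$ is false in general. A section of $\HOM_A(\sF,\sF)$ over $U$ is an $A$-linear morphism of \emph{sheaves} $\sF|_U\to\sF|_U$, and evaluation on global sections gives only an injection into $\Hom_A(M,M)$; for instance, with $R=\K\times\K$, $M=R$, $A=\K$, any $\K$-linear sheaf endomorphism of $\widetilde{M}$ must restrict to each point and hence be diagonal, whereas $\Hom_\K(R,R)$ is $4$-dimensional. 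So your first paragraph proves only the inclusion $\sD_A(X,\sF)(U)\hookrightarrow D_A(R,M)$. The missing surjectivity---that every pair $(h,u)\in D_A(R,M)$ actually sheafifies---is precisely what the exact-sequence comparison delivers, because the remaining terms of~\eqref{equ.esattaperfasci} are $\HOM_{\Oh_X}$-sheaves between coherent sheaves, or $p_*$ of a coherent sheaf, and for these the sections over an affine open are computed by the corresponding module-level constructions in Lemma~\ref{lem.esattapercoerenza}.

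One minor correction in your second paragraph: you justify the coherence of $p_*\Theta_{Y/A}$ by saying $p$ is ``affine and finite type'', but that alone does not force coherent pushforward (think of $\A^1\to\Spec\K$). What you need, and what the paper uses, is that $p$ is \emph{finite}; this holds here because $p_*\Oh_Y=\Oh_X\oplus\sF$ is coherent by construction.
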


\begin{proof} According to the above assumptions, the scheme $Y$ is also  of finite type over $A$ and the morphism $p$ is finite. In particular, 
both $\Omega_{X/A}$ and $p_*\Theta_{Y/A}$ are coherent sheaves and so $\sD_A(X,\sF)$ is coherent, being the kernel of a morphism of coherent sheaves. Given an open affine subset $U\subset X$, the proof that the natural map 
$\sD_A(X,\sF)(U)\to D_A(\Oh_X(U),\sF(U))$ is an isomorphism follows by comparing the exact sequences
obtained by applying the functor $\Gamma(U,-)$ to  
\eqref{equ.esattaperfasci} and  the exact sequence of Lemma~\ref{lem.esattapercoerenza}.
The  statement about $\sD_A(X,\sF\xrightarrow{f}\sG)$ is proved in the same way.
\end{proof}

\begin{example}\label{example.DL algebroid atya}
Let $\mathcal{F}$ be a locally free sheaf  on $X$, then the sheaf $\sD_A(X,\sF)$ is isomorphic to the sheaf of $A$-linear
first order differential operators on $\sF$ with scalar symbol. Via this isomorphism, the 
exact sequence 
\begin{equation}\label{eq atya sequence}
   0 \to \HOM_{\Oh_X}(\sF,\sF) \to \sD_A(X,\sF) \to  \Theta_{X/A} \to 0
\end{equation}
corresponds, up to isomorphism, to the Atiyah extension of $\sF$, cf.  \cite{Ati}, 
\cite[Example 2.3]{manetticinese}, \cite[p. 145]{Sernesi}. 
\end{example}

\begin{definition} Let $X\to \Spec A$ be a noetherian scheme  over a noetherian ring $A$. By a (quasi)coherent sheaf of Lie algebras over $X/A$ we  mean a (quasi)coherent sheaf of $\Oh_X$-modules $\sL$, together an $A$-bilinear bracket 
$\sL\times \sL\xrightarrow{\;[-,-]\:} \sL$ inducing a structure of Lie algebra over $A$ on every stalk of $\sL$.  Note that the bracket is not  $\Oh_X$- linear, so this is not the same structure as a Lie algebra in the monoidal category of quasi coherent module.
A morphism of  (quasi)coherent sheaves of Lie algebras is a morphism of sheaves of $\Oh_X$-modules commuting with the brackets.
\end{definition}

For instance,  if $\sF$ is a coherent sheaf on $X$, then the anchor map 
$\alpha\colon \sD_A(X,\sF)\to \Theta_{X/A}$ is a morphism of coherent sheaves of Lie algebras over $X/A$.
If $\sF\to \sG$ is a morphism of coherent sheaves, then the natural maps 
\[ \sD_A(X,\sF\to\sG)\to \sD_A(X,\sF),\qquad \sD_A(X,\sF\to\sG)\to \sD_A(X,\sG),\]
are morphisms of coherent sheaves of Lie algebras over $X/A$.

Recall that a differential graded (DG) Lie algebra over a commutative ring $A$ is 
the data of a cochain complex $(L,d)$ of $A$-modules,  together
with an   $A$-bilinear map $[- ,-] \colon L \times L \to L$  (called bracket)
of degree 0, such that the following conditions are satisfied:
\begin{enumerate}

\item (graded  skewsymmetry)
$[x,y]=-(-1)^{ij} [y,x]\in L^{i+j}$,  for every $x\in L^i$ and $y\in L^j$;

\item $[x,x]=0$ for every $x\in L^{2i}$, and $[x,[x,x]]=0$ for every $x\in L^{2i+1}$, with $i\in \Z$;

\item (graded Jacobi identity) $ [x,[y,z]] = [[x,y],z] + (-1)^{ij} [y, [x,z]]$, for every $x\in L^i$, $y\in L^j$ and $z\in L$;

\item (graded Leibniz rule) $ d[x,y] =[ dx,y]+ (-1)^{i}[x, dy]$, for every $x\in L^i$ and $y\in L^j$.

\end{enumerate}

A morphism of differential graded Lie algebras $\chi \colon L \to M$ is a morphism of cochain complexes that commutes with brackets.

\begin{definition} Let $X\to \Spec A$ be a noetherian scheme  over a noetherian ring $A$. By a quasi-coherent sheaf of DG-Lie algebras over $X/A$ we  mean a  complex $\sL^*$ of quasi-coherent sheaves 
of $\Oh_X$-modules, together an $A$-bilinear bracket 
$\sL^*\times \sL^*\xrightarrow{\;[-,-]\:} \sL^*$ inducing a structure of DG-Lie algebra over $A$ on every stalk of $\sL$. A quasi-coherent sheaf of DG-Lie algebras $\sL^*$ is coherent if $\oplus \sL^i$ is a coherent sheaf.
A morphism of  quasi-coherent sheaves of DG-Lie algebras is a morphism of complexes  of $\Oh_X$-modules commuting with the brackets. 
\end{definition}

\begin{example}\label{example algebroid complex of sheaf}
Let $X\to \Spec A$ be a noetherian scheme  over a noetherian ring $A$, and let 
$\sE^*$ be a bounded complex of coherent sheaves of $\Oh_X$-modules. For every index $i$, the sheaf  
\[ \HOM_{\Oh_X}^i(\sE^*,\sE^{*})=\prod_j \HOM_{\Oh_X}(\sE^j,\sE^{j+i})\]
is coherent and then $\HOM^*_{\Oh_X}(\sE^*,\sE^*)$ is a coherent sheaf of DG-Lie algebras.
  
As in Section~\ref{sec.lifting}, we can define the complex of $\Oh_X$-modules $\sD^*_A(X,\sE^*)$ as the subsheaf 
of  $\DER^*_A(\Oh_X, \Oh_X) \times \HOM^*_A(\sE^*,\sE^*)$, whose elements are the pairs $(h,u)$ such that 
\[
u(rm)-ru(m)=h(r)m, \qquad \text{for every } r\in \Oh_X,\; m\in \sE^*.
\]
For every $i\not=0$ we have $\sD^i_A(X,\sE^*)=\HOM_{\Oh_X}^i(\sE^*,\sE^{*})$, while $\sD^0_A(X,\sE^*)$ is the limit of the 
set of anchor maps $\sD^0_A(X,\sE^j)\xrightarrow{\alpha}\DER_A(\Oh_X,\Oh_X)=\Theta_{X/A}$. Thus 
$\sD^*_A(X,\sE^*)$ is a coherent sheaf of DG-Lie algebras and there exists a short exact sequence   of 
coherent sheaves of DG-Lie algebras over $X/A$:
\begin{equation}\label{eq atyah sequence for complex}
 0 \to   \HOM^*_{\Oh_X}(\sE^*,\sE^*) \to \sD^*_A(X,\sE^*) \xrightarrow{\;\alpha\;} \Theta_{X/A}\,.
\end{equation}
According to Lemma~\ref{lem.liftingproperty}, the anchor map $\sD^*_A(X,\sE^*) \xrightarrow{\;\alpha\;} \Theta_{X/A}$ is surjective whenever 
every sheaf $\sE^j$ is locally free. Analogously to Remark \ref{propo D sono fasci coerenti},  for every open affine subset $U\subset X$, we have: 
\[ \sD_A(X,\sE^*)(U)\cong D_A(\Oh_X(U),\sE^*(U))\,.\]
\end{example}

It is well known that for every bounded complex  
$\sE^*$ of locally free sheaves we can define the trace map 
\begin{equation}\label{equ.traccia degreenot0} 
\Tr\colon \HOM^*_{\Oh_X}(\sE^*,\sE^*)\to \Oh_X,
\end{equation}
which is a surjective morphism of complexes of coherent sheaves: given  $f\in \HOM^i_{\Oh_X}(\sE^*,\sE^*)$ then
$\Tr(f)=0$ for $i\not=0$; if $i=0$, then $f$ is the datum of a finite sequence of morphisms of locally free sheaves $f_j\colon \sE^j\to \sE^j$, and 
\[ \Tr(f)=\sum_j (-1)^j\Tr(f_j),\]
where $\Tr\colon \HOM_{\Oh_X}(\sE^j,\sE^j)\to \Oh_X$ is the usual trace map.

It is easy to verify that $\Tr$ is also a morphism of sheaves of DG-Lie algebras, where $\Oh_X$ is equipped with the trivial bracket. In fact, since the differential on
$\HOM^*_{\Oh_X}(\sE^*,\sE^*)$ is an adjoint operator, it is sufficient to prove that  
$\Tr([f,g])=0$ for every $f\in \HOM^i_{\Oh_X}(\sE^*,\sE^*)$ and $g\in \HOM^j_{\Oh_X}(\sE^*,\sE^*)$. 
This is clear if $i+j\not=0$, and so it is not restrictive to assume $i=-j$. Since the trace is 
$\Oh_X$-linear, we can reduce this fact to the following statement of linear algebra: let $V_i$, $i\in \Z$, be a sequence  of finite dimensional vector spaces, with $V_i\not=0$ for at most finitely many indices $i$.  Then for every integer $p$ and any two sequences 
\[ f_i\colon V_i\to V_{i+p},\qquad   g_i\colon V_i\to V_{i-p},\]
of linear maps, we have   
\[ \sum_i (-1)^i\Tr\left(f_{i-p}g_{i}-(-1)^{p^2}g_{i+p}f_i\right)=0\,.\]
In fact, by a basic fact of linear algebra we have $\Tr(f_{i-p}g_{i})=\Tr(g_if_{i-p})$ for every $i$, and then
\[\begin{split} 
\sum_i (-1)^i\Tr\left(f_{i-p}g_{i}-(-1)^{p^2}g_{i+p}f_i\right)&=\sum_i (-1)^i
\Tr(f_{i-p}g_{i})-(-1)^{i+p}\Tr(g_{i+p}f_i)\\
&=\sum_i (-1)^i
\Tr(f_{i-p}g_{i})-(-1)^{i}\Tr(g_{i}f_{i-p})=0\,.\end{split}\]

Denoting by $\HOM^*_{\Oh_X}(\sE^*,\sE^*)_0$ the sheaf of traceless endomorphisms of $\sE^*$, i.e., the kernel of \eqref{equ.traccia degreenot0}, and by $\Ext^i_{X}(\sF,\sF)_0$ its hypercohomology groups, 
there exists a long exact sequence 
\begin{equation}\label{equ.ext0} \to \Ext^1_{X}(\sF,\sF)\xrightarrow{\Tr}H^1(X,\Oh_X)\to 
\Ext^2_{X}(\sF,\sF)_0\to \Ext^2_{X}(\sF,\sF)\xrightarrow{\Tr}H^2(X,\Oh_X)\to\;.\end{equation}

By the results of Section~\ref{sec.derivationpairs}, we can extend the trace map 
$\Tr\colon \HOM^0_{\Oh_X}(\sE^*,\sE^*)\to \Oh_X$ 
to a Lie algebra morphism 
\begin{equation}\label{equ.traccia degree0} 
\Tr\colon \sD_A^0(X,\sE^*)\to \sD_A(X,\det \sE^*)\,.
\end{equation}
Assume that the finite complex $\sE^*$ is nonzero only for degree between $n$ and $m$, then  
by definition,  an element of $\sD_A^0(X,\sE^*)$ is given by a 
sequence
$(h,u_n,\ldots,u_m)$ where $h\in \DER_{A}(\Oh_X,\Oh_X)$ and 
$(h,u_i)\in \sD_A(X,\sE^i)$ for every $i$. 
Taking the trace of every pair $(h,u_i)$ (Definition~\ref{def.tracemodule}) we obtain a finite sequence of derivations of pairs 
\[ (h,v_i)\in \sD_A(X,\det \sE^i),\qquad v_i=\Tr(u_i),\qquad i=n,\ldots,m\;.\] 
Then, for every odd index $i$ we consider the transpose \eqref{equ.transposition} 
\[ (h,v_i)^T\in \sD_A(X,(\det \sE^i)^{-1})\,.\]
Since 
\[ [(h,v_0),(h,v_1)^T]\in \sD_A(X,\det \sE^0)\times_{\DER_{A}(\Oh_X,\Oh_X)}\sD_A(X,(\det \sE^1)^{-1})\]
we may apply the Lie morphism $\Phi$ of  Lemma~\ref{lem.tensor_e_hom} in order to obtain 
an element  $\Tr(h,u_0,u_1)\in \sD_A(X,(\det\sE^0)\otimes (\det \sE^1)^{-1})$. 
It is now clear that, with a finite number of  constructions as above, we have 
a well defined element 
\[ \Tr(h,u_n,\ldots,u_m)\in \sD_A(X,\det \sE^*)=\sD_A\left(X,\bigotimes_{i=n}^m(\det \sE^i)^{(-1)^i}\right)\,.\]   
Moreover, we have also proved that all the above considered maps
\[ \sD_A(X,\sE^i)\to \sD_A(X,\det \sE^i),\qquad {\prod_i}^{\times}\sD_A(X,\det \sE^i)\to 
\sD_A(X,\det \sE^*),\]
are morphisms of sheaves of Lie algebras. Keeping in mind that 
\[ \sD_A^0(X,\sE^*)={\prod_i}^\times\sD_A(X,\sE^i),\qquad \sD_A^j(X,\sE^*)=\HOM^j_{\Oh_X}(\sE^*,\sE^*),\quad j\not=0,\]
we can glue together \eqref{equ.traccia degree0} and \eqref{equ.traccia degreenot0} and obtain a morphism of sheaves of DG-Lie algebras
\begin{equation}\label{equ.traccia alldegree} 
\Tr\colon \sD_A^*(X,\sE^*)\to \sD_A(X,\det \sE^*)\,.
\end{equation}

The following theorem is now clear.

\begin{theorem}\label{thm.tracediagram} For every bounded complex of locally free sheaves $\sE^*$ on a scheme $X$ of finite type over a noetherian ring $A$, there exists a commutative diagram of morphisms of coherent sheaves of DG-Lie algebras
\[ \xymatrix{0\ar[r]&\HOM^*_{\Oh_X}(\sE^*,\sE^*)\ar[d]^{\Tr}\ar[r]&\sD^*_A(X,\sE^*)\ar[d]^{\Tr}\ar[r]^-{\alpha}&\Theta_{X/A}\ar[r]\ar@{=}[d]&0\;\;\\
0\ar[r]&\Oh_X\ar[r]&\sD_A(X,\det \sE^*)\ar[r]^-{\alpha}&\Theta_{X/A}\ar[r]&0\;.}\]
\end{theorem}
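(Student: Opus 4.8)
The plan is to observe that every object and every arrow in the diagram has already been produced in the preceding pages, so that the proof reduces to three verifications: that the two horizontal rows are short exact sequences of coherent sheaves of DG-Lie algebras over $X/A$, that the three vertical arrows are morphisms of such sheaves, and that the two squares commute. In other words, this is an assembly statement, which is why the text calls it ``clear''.

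For the rows, I would identify the top one with the Atiyah-type sequence \eqref{eq atyah sequence for complex} of Example~\ref{example algebroid complex of sheaf}; it is left exact and exact in the middle by construction, and since every $\sE^j$ is locally free the anchor $\alpha\colon\sD^*_A(X,\sE^*)\to\Theta_{X/A}$ is surjective (Lemma~\ref{lem.liftingproperty}, as already recorded in Example~\ref{example algebroid complex of sheaf}), so the top row is short exact. For the bottom row I would note that $\det\sE^*=\bigotimes_i(\det\sE^i)^{(-1)^i}$ is an invertible sheaf, whence evaluation at the section $1$ gives a canonical identification $\HOM_{\Oh_X}(\det\sE^*,\det\sE^*)\cong\Oh_X$; the bottom row is then exactly the Atiyah sequence \eqref{eq atya sequence} of the line bundle $\det\sE^*$ from Example~\ref{example.DL algebroid atya}, which is short exact. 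Coherence of all the sheaves involved is part of those same statements.

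Next I would recall the vertical arrows: the middle one is the morphism of sheaves of DG-Lie algebras \eqref{equ.traccia alldegree}, glued from \eqref{equ.traccia degree0} in degree $0$ and from the classical alternating trace \eqref{equ.traccia degreenot0} in the remaining degrees; the left one is \eqref{equ.traccia degreenot0} itself, already checked above to be a surjective DG-Lie morphism onto $\Oh_X$ with trivial bracket and zero differential; the right one is the identity. The right-hand square commutes because each of the three operations assembling \eqref{equ.traccia alldegree} --- the trace of a pair (Definition~\ref{def.tracemodule}), the transpose \eqref{equ.transposition}, and the Lie morphism $\Phi$ of Lemma~\ref{lem.tensor_e_hom} --- is compatible with the anchor maps, so that $\alpha\circ\Tr=\alpha$ in degree $0$ while both composites vanish in the other degrees. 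For the left-hand square I would restrict \eqref{equ.traccia alldegree} to $\HOM^*_{\Oh_X}(\sE^*,\sE^*)$, i.e. to elements with $h=0$: in degree $0$ such an element is a tuple $(f_j)$ of $\Oh_X$-linear endomorphisms, the trace of the pair $(0,f_j)$ is multiplication by $\Tr(f_j)$ on $\det\sE^j$ (the remark after Definition~\ref{def.tracemodule}), the transpose \eqref{equ.transposition} turns multiplication by a section $c$ into multiplication by $-c$ on the dual line bundle, and $\Phi$ adds these scalars; assembling the contributions of the degrees $n$ through $m$ reproduces $\sum_j(-1)^j\Tr(f_j)$, which is the value of \eqref{equ.traccia degreenot0}.

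I do not expect a genuine obstacle here, but the step that deserves the most care is this last one: checking that the exponents $(-1)^i$ appearing in $\det\sE^*$ are matched, sign for sign, by the alternating signs of the classical trace of a complex, and more generally that the degreewise gluing in \eqref{equ.traccia alldegree} is simultaneously compatible with the differential and with the brackets. In the nonzero degrees this is the small linear-algebra computation carried out just before the theorem; in degree $0$ it follows from the functoriality of the constructions of Section~\ref{sec.derivationpairs}.
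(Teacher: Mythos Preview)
Your proposal is correct and follows exactly the approach the paper intends: the theorem is stated as ``clear'' precisely because every object, arrow, exactness assertion, and compatibility has been established in the lines immediately preceding it, and your write-up simply makes that assembly explicit. Your most careful step---tracking how the transpose and $\Phi$ combine to produce the alternating sign $\sum_j(-1)^j\Tr(f_j)$ when $h=0$---is the one nontrivial verification the paper leaves to the reader, and you handle it correctly.
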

   
\bigskip

\section{A short review of deformation theory via DG-Lie algebras}\label{sezione DGLA e tot}
\label{sec.review}

The main references for this section are \cite{FIM,GoMil1,ManRendiconti,ManettiSemireg,ManettiSeattle}.
From this section, and throughout the rest of the paper, we work over a fixed algebraically closed  field $\K$ of characteristic zero.
We denote by $\Set$  the category of sets 
and by $\Art_\K$   the category of Artin local
$\K$-algebras with residue field $\K$, and by  $\mathbf{DGLA}=\mathbf{DGLA}_{\K}$  the category of DG-Lie algebras over $\K$. Unless otherwise specified,
for every  local algebra  $A\in \mathbf{Art}_{\K}$, we denote by
$\mathfrak{m}_A$ its maximal ideal.

Given a DG-Lie algebra  $L$ over $\K$,  we  can define two functors of Artin rings.
The Maurer-Cartan functor $\MC_L \colon \mathbf{Art}_{\K}\to
\mathbf{Set}$ is defined by:
\[
\MC_L(A)=\left\{ x \in L^1\otimes \mathfrak{m}_A \mid dx+ \displaystyle\frac{1}{2}
[x,x]=0 \right\},
\]
where the DG-Lie structure on $L \otimes \mathfrak{m}_A$ is obtained by scalar extension from the 
DG-Lie structure on $L$.
The deformation functor $\Def_L\colon\Art_\K \longrightarrow \Set$ is:
\[
\Def_L(A)=\frac{\MC_L(A)}{\text{gauge}}=\frac{\{ x \in L^1\otimes \mathfrak{m}_A \ |\ dx+
\displaystyle\frac{1}{2} [x,x]=0 \}}{\exp(L^0\otimes \mathfrak{m}_A )  }\;,
\]
where the gauge action $\ast\colon\exp(L^0 \otimes \mathfrak{m}_A)\times
\MC_L(A)\longrightarrow {\MC}_L(A)$ may be defined by the explicit formula
\[
e^a \ast x:=x+\sum_{n\geq 0} \frac{ [a,-]^n}{(n+1)!}([a,x]-da)\,.
\]

A quasi-isomorphism of DG-Lie algebras is a morphism
that induces an isomorphism in cohomology.  Two DG-Lie algebras $L$ and $M$ are said to be 
quasi-isomorphic, or homotopy equivalent, if they are equivalent under the
equivalence relation generated by quasi-isomorphisms.

By standard facts about deformation functors 
\cite{ManettiSemireg,ManettiSeattle},  it is known that the tangent space to $\Def_L$ is isomorphic to
$H^1(L)$ and that $H^2(L)$  is an obstruction space.

\begin{remark}\label{rem.basictheorem}
Every morphism $L\to M$ of DG-Lie algebras  induces a natural transformation of the
associated deformation functors $\Def_L\to \Def_M$. A basic result \cite[Thoerema 1.5]{ManettiSeattle} asserts that if
$H^0(L)\to H^0(M)$ is surjective, $H^1(L)\to H^1(M)$ is bijective and 
$H^2(L)\to H^2(M)$ is injective, then $\Def_L\to \Def_M$ is an isomorphism. 
\end{remark}

\begin{lemma}\label{lem.criteriouniversal} 
Let $L$ be a DG-Lie algebra over a field of characteristic 0 with $H^1(L)$ finite dimensional. If the natural map 
\[ N^0=\{x\in L^0\mid dx=0,\; [x,L^1]=0\}\to H^0(L)\]
is surjective, then $\Def_L$ is pro-representable.
\end{lemma}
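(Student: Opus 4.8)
The plan is to verify the hypotheses of Schlessinger's criterion for pro-representability, which for deformation functors of DG-Lie algebras reduces to checking that $\Def_L$ is a deformation functor with finite-dimensional tangent space (already assumed, since $T\Def_L = H^1(L)$) together with the vanishing of the obstruction to pro-representability coming from automorphisms; equivalently, one checks that $\Def_L$ commutes with the relevant fibered products of Artin rings, in particular that for every small extension $A'\to A$ the map $\Def_L(A'\times_A A')\to \Def_L(A')\times_{\Def_L(A)}\Def_L(A')$ is bijective (and not merely surjective, which always holds). The standard way to get this bijectivity is to show that the \emph{automorphisms act trivially} in the appropriate sense: concretely, if $x\in \MC_L(A')$ is a Maurer--Cartan element and $e^a\in\exp(L^0\otimes\mathfrak m_{A'})$ is a gauge transformation with $e^a\ast x \equiv x$ modulo the ideal $I=\ker(A'\to A)$ (so that $a$ may be taken in $L^0\otimes I$), then in fact $e^a\ast x = x$. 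This is exactly the property that forces the gauge groupoid to be "rigid enough" for pro-representability.

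First I would make the reduction to a small extension $A'\to A$ with kernel $I$ annihilated by $\mathfrak m_{A'}$, so $I$ is a $\K$-vector space and $L^0\otimes I$ has trivial bracket and trivial differential-twist: for $a\in L^0\otimes I$ and $x\in L^1\otimes\mathfrak m_{A'}$ the gauge formula collapses to $e^a\ast x = x - da - [a,x]$, because all higher terms $[a,-]^n$ with $n\geq 1$ land in $L\otimes \mathfrak m_{A'}I = 0$. Hence $e^a\ast x = x$ if and only if $da + [a,x]=0$ in $(L^1\oplus L^2)\otimes I$. Next I would observe that $x = x_0 + (\text{higher order})$ where $x_0$ is the image of $x$ in $L^1\otimes(\mathfrak m_{A'}/\mathfrak m_{A'}^2)$, and again because $I\cdot\mathfrak m_{A'}=0$ we get $[a,x] = [a,x_0]$ depends only on the "linear part" $\bar x\in H^1(L)\otimes(\ldots)$ of $x$ after using $dx+\tfrac12[x,x]=0$; more precisely, modulo exact terms the condition $da=0,\ [a,x]=0$ says $a$ defines a cohomology class in $H^0(L)\otimes I$ that pairs to zero against the tangent vector of $x$ in $H^1(L)$.

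Now I would invoke the hypothesis: the map $N^0\to H^0(L)$ is surjective, where $N^0 = \{x\in L^0 \mid dx=0,\ [x,L^1]=0\}$. Writing $a = z + dw$ with $z\in\ker d$ representing the class $[a]\in H^0(L)$ (here $da=0$ already, so $a$ itself is a cocycle and $[a]\in H^0(L)\otimes I$), surjectivity of $N^0\to H^0(L)$ lets me choose a representative $n\in N^0\otimes I$ of the same class, so $a = n + dw$ for some $w\in L^{-1}\otimes I$. Since $n\in N^0$ we have $[n,x]=[n,x_0]=0$ automatically (as $x_0\in L^1$, or rather its components lie in $L^1$), and since $dw$ is exact, the gauge transformation $e^a$ equals $e^{dw}$ up to the (trivial, because brackets vanish on $L\otimes I$) Baker--Campbell--Hausdorff correction; but $e^{dw}\ast x = x - d(dw) - [dw,x] = x - [dw,x]$, and one checks $[dw,x] = d[w,x] \pm [w,dx]$, with $dx = -\tfrac12[x,x]$, so $[dw,x]$ is itself exact in $L^2\otimes I$ and its vanishing is automatic from $e^a\ast x=x$ being assumed modulo nothing — wait, more carefully: the assumption is $e^a\ast x = x$ exactly, i.e. $da+[a,x]=0$; having reduced $a$ to $n+dw$ with $[n,x]=0$ we get $[dw,x]=0$, i.e. the exact part also acts trivially, and then a standard argument (the gauge action of exact elements $e^{dw}$ is "inner" and its triviality on $x$ propagates) shows $e^{dw}$ acts trivially on the whole fiber, giving the desired injectivity.

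The main obstacle I anticipate is precisely this last step: cleanly showing that once the \emph{semisimple part} $n$ of the infinitesimal gauge $a$ is killed by the hypothesis $N^0\twoheadrightarrow H^0(L)$, the remaining \emph{exact part} $dw$ contributes nothing to the obstruction for pro-representability. The honest way to organize this is not to argue element-by-element but to use the known general criterion (see e.g. the cited references \cite{ManettiSemireg,ManettiSeattle}) that $\Def_L$ is pro-representable as soon as the "tangent obstruction" $H^0(L)$ acts trivially on first-order deformations in the precise sense that the relevant component of the Schlessinger map $h$ is an isomorphism; and the surjectivity of $N^0\to H^0(L)$ is exactly what guarantees that every class in $H^0(L)$ is represented by an element centralizing $L^1$, hence acting trivially on every Maurer--Cartan element up to first order, hence on $\Def_L$. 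So the real content is translating "$N^0\to H^0(L)$ surjective" into "the group $\exp(H^0(L)\otimes\mathfrak m_A)$ acts trivially on $\Def_L(A)$ for all $A$", and then quoting the standard Schlessinger-type pro-representability statement for $\Def_L$.
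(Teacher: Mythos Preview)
Your strategy has the right intuition---the hypothesis says every degree-zero cohomology class admits a representative centralising $L^1$, hence acting trivially under gauge---but the execution does not close. You phrase the Schlessinger (H4) check as ``if $e^a\ast x\equiv x$ modulo $I$ then $e^a\ast x=x$'', which is not the correct formulation of bijectivity on the fibered product, and midway you switch to assuming $e^a\ast x=x$ exactly, so it becomes unclear what is being proved. More seriously, after writing $a=n+dw$ with $n\in N^0\otimes I$, you still owe an argument that the exact part $e^{dw}$ contributes nothing to the failure of (H4); observing $[dw,x]=0$ only says $e^{dw}$ stabilises this particular $x$, not that its presence is harmless for the fibered-product comparison. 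You end by deferring to an unnamed ``standard Schlessinger-type statement'' without checking its hypothesis, so the proof is not complete.

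The paper sidesteps all of this by passing to a sub-DG-Lie algebra on which the gauge action is \emph{literally} trivial. Take $N^0$ as in the statement, choose a linear complement $N^1\subset L^1$ to $d(L^0)$, and set $N^i=L^i$ for $i\ge 2$. Then $N=\bigoplus_{i\ge 0}N^i$ is a sub-DG-Lie algebra, and the inclusion $N\hookrightarrow L$ is surjective on $H^0$ (the hypothesis), bijective on $H^1$ (since $L^1=d(L^0)\oplus N^1$ and $d(N^0)=0$), and injective on $H^2$ (since $d(L^1)=d(N^1)$); hence $\Def_N\to\Def_L$ is an isomorphism by Remark~\ref{rem.basictheorem}. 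But every element of $N^0$ is closed and brackets trivially with $N^1$, so the gauge action on $\MC_N$ is trivial, $\Def_N=\MC_N$, and the latter is pro-representable by Schlessinger. This replaces your delicate stabiliser analysis with a one-line observation.
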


\begin{proof} Notice first that $N^0$ is a Lie subalgebra of $L^0$. Then 
define $N^i=L^i$ for every $i\ge 2$ and choose a vector subspace
$N^1\subset L^1$ such that $L^1=d(L^0)\oplus N^1$.
The inclusion of DG-Lie algebras $N=\oplus_{i\ge 0}N^i\hookrightarrow L$ satisfies the condition of Remark~\ref{rem.basictheorem} and then $\Def_L=\Def_N$. On the other side the gauge action on $\MC_N$ is trivial and then 
$\Def_N=\MC_N$ is pro-representable by Schlessinger's theorem \cite[Thm. 2.11]{Sch}.
\end{proof}

Let $\mathbf{\Delta}_{\operatorname{mon}}$ be  the category whose objects are finite
ordinal sets and whose morphisms are order-preserving injective
maps between them.  
A  semicosimplicial differential graded Lie algebra is a
covariant functor $\mathbf{\Delta}_{\operatorname{mon}}\to
\mathbf{DGLA}$. Equivalently, a
semicosimplicial DG-Lie algebra $\mathfrak{g}^\Delta$ is a diagram
 \[
\xymatrix{ {{\mathfrak g}_0}
\ar@<2pt>[r]\ar@<-2pt>[r] & { {\mathfrak g}_1}
      \ar@<4pt>[r] \ar[r] \ar@<-4pt>[r] & { {\mathfrak g}_2}
\ar@<6pt>[r] \ar@<2pt>[r] \ar@<-2pt>[r] \ar@<-6pt>[r]&
\cdots},
\]
where each  ${\mathfrak g}_i$ is a DG-Lie algebra, and for each
$ i > 0 $, there are $ i + 1$ morphisms of DG-Lie algebras
\[
\partial_{k,i} \colon  {\mathfrak g}_{i-1}\to {\mathfrak
g}_{i},
\qquad k=0,\dots,i,
\]
such that $\partial_{ k+1, i+1} \partial_{l , i}= \partial_{l,i+1}\partial_{k,i}$,
for any  $k\geq l$. 
Here we use the non-standard notation of lower indexes for a semicosimplicial object, since the upper indexes are already used to denote degrees.

We denote by $\Tot( {\mathfrak g}^\Delta)$ the image of a  semicosimplicial differential graded Lie algebra ${\mathfrak g}^\Delta$ 
 via the  Thom-Whitney totalization functor 
\[\Tot\colon\mathbf{DGLA}^{\Delta_{\operatorname{mon}}}\to
\mathbf{DGLA}.
\] 
We refer to \cite[Section2]{Rugg}, \cite[Section 3.2]{FIM},
\cite[Section 3]{FMM} and \cite[Section 3]{semireg2011} for an explicit description of the DG-Lie algebra ${{\Tot}(\g^{\Delta})} $ and its properties. Here, we remind only  that  there exists a quasi-isomorphism of  complexes   of vector spaces between ${{\Tot}(\g^{\Delta})}$ and the total complex associated with the  cochain complex $C(\g^{\Delta})$ (considering any $\g^{\Delta}$ as a semicosimplicial object in the abelian category of DG-vector spaces). As a consequence of this fact, if $f\colon \g^{\Delta}\to \mathfrak{h}^{\Delta}$ is a morphism of  semicosimplicial differential graded Lie algebras such that $f\colon \g_i\to \mathfrak{h}_i$ is a quasi-isomorphism of DG-Lie algebras for every $i$, then the image 
$\Tot(f)\colon\Tot(\mathfrak{g}^{\Delta}) \to \Tot(\mathfrak{h}^{\Delta})$ is a quasi-isomorphism.

Considering the DG-Lie algebra  ${{\Tot}(\g^{\Delta})} $, we can naturally associate with any  semicosimplicial DG-Lie algebra $\g^\Delta$ the  functor  of Artin rings $\Def_{{\Tot}(\g^{\Delta})} \colon \Art_\K \to \Set$.
According to  \cite[Definition 3.1 and Definition 3.3]{FIM}, we can also associate with  $\g^\Delta$ two other functors of Artin rings. The former 
\[Z^1_{\rm sc} (\exp \g^{\Delta})\colon
\mathbf{Art}_{\mathbb K} \to \mathbf{Set}\]
 is defined, for any
$A\in \mathbf{Art}_{\mathbb K}$, by
\[ Z^1_{\rm sc}(\exp \g^{\Delta})(A)= \left\{  (l,m)\in
(\g_0^1 \times \g^0_1) \otimes \mathfrak m_A  \,\middle|
\begin{array}{l}
dl+ \frac{1}{2}[l,l]=0,\\  \partial_{1,1}l=e^{m}*\partial_{0,1}l, \\
{ \partial_{0,2}m} \bullet  {-\partial_{1,2}m} \bullet
{\partial_{2,2}m} =dn+[\partial_{2,2}\partial_{0,1}l,n]\\
\qquad\qquad \qquad \qquad \text{ for some $n\in {\mathfrak g}_2^{-1} \otimes{\mathfrak m}_A$}
\end{array} \right\}.\]
The latter
\[ H^1_{\rm sc}(\exp \g^\Delta) \colon \mathbf{Art}_{\mathbb K}
 \to \mathbf{Set}\]
  is defined, for any
$A\in \mathbf{Art}_{\mathbb K}$, by
\[H^1_{\rm sc}(\exp \g^\Delta)(A)= {Z^1_{\rm sc}
(\exp \g^\Delta)(A)}/{\sim}\;,\]
where two elements
$(l_0,m_0)$ and $(l_1,m_1) \in Z^1_{\rm sc} (\exp \g^{\Delta})(A)$
are equivalent under the relation  $\sim$ if  and only if there
exist elements $a \in {\g^0}_0\otimes {\mathfrak m}_A$ and $b\in{\mathfrak
g}_1^{-1}\otimes{\mathfrak m}_A$ such that
\[
\begin{cases}
e^a * l_0=l_1\\
- m_0\bullet -\partial_{1,1}a \bullet m_1
\bullet \partial_{0,1}a=db+[\partial_{0,1}l_0,b]\,.
\end{cases}
\] 
\begin{theorem}
Let ${\mathfrak g}^\Delta$ be a semicosimplicial DG-Lie algebra such that $H^{j}(\g_i)=0$ for all $i\geq 0$ and $j<0$. Then, there exists an   isomorphism of functors of Artin rings
\begin{equation}\label{eq iso funtori Def H1}
\Def_{{\Tot}(\g^{\Delta})} \to H^1_{\rm sc}(\exp\g^\Delta)\,.
\end{equation}
\end{theorem}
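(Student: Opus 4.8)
The plan is to build the isomorphism \eqref{eq iso funtori Def H1} by exhibiting an explicit morphism of functors at the cochain level and then checking it is well-defined and bijective. First I would recall the concrete description of $\Tot(\g^\Delta)$ from \cite{FIM,semireg2011}: an element of $\Tot(\g^\Delta)^1$ is a collection of polynomial forms $(x_n)_{n\geq 0}$ with $x_n\in (\g_n\otimes \Omega_n)^1$ satisfying the simplicial compatibility $(\partial_{k,n+1}\otimes 1)x_n=(1\otimes\delta^k)x_{n+1}$, where $\Omega_n$ is the algebra of polynomial differential forms on the standard $n$-simplex. Since $H^j(\g_i)=0$ for $j<0$, the hypothesis guarantees that the negatively-graded pieces of $\Tot(\g^\Delta)$ carry no cohomology obstructing the gauge reduction, so that $\Def_{\Tot(\g^\Delta)}$ is genuinely computed by Maurer-Cartan modulo gauge in degrees $\geq 0$; this is exactly the point where the vanishing hypothesis is used.

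The core step is the comparison map. Given a Maurer-Cartan element $\xi=(x_n)\in\MC_{\Tot(\g^\Delta)}(A)$, one integrates the forms over the simplices (or, equivalently, uses Dupont's homotopy retraction of $\Omega_\bullet$ onto normalized cochains) to extract data $(l,m)$ with $l\in\g_0^1\otimes\mathfrak m_A$ the restriction of $x_0$ to the vertex, and $m\in\g_1^0\otimes\mathfrak m_A$ obtained by integrating $x_1$ along the edge of $\Delta^1$. The Maurer-Cartan equation for $\xi$ then unwinds, via Stokes' theorem applied on $\Delta^1$ and $\Delta^2$, precisely into the three conditions defining $Z^1_{\rm sc}(\exp\g^\Delta)(A)$: the equation $dl+\frac12[l,l]=0$ comes from the vertex component, the gauge relation $\partial_{1,1}l=e^m*\partial_{0,1}l$ comes from integrating along the $1$-simplex, and the cocycle-up-to-coboundary condition on $m$ (with the element $n\in\g_2^{-1}\otimes\mathfrak m_A$) comes from the $2$-simplex together with the fact that $H^{-1}(\g_2)=0$ forces the relevant $2$-cochain discrepancy to be a coboundary. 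Conversely, starting from $(l,m)\in Z^1_{\rm sc}$ one reconstructs a totalization Maurer-Cartan element by the standard homotopy-transfer/summing-over-trees procedure of \cite{FIM}; here again the hypothesis $H^{<0}(\g_i)=0$ is what makes the transfer terminate with no higher-order obstructions.

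Next I would check that this assignment descends to the quotients: a gauge transformation $e^a$ on $\Tot(\g^\Delta)^0\otimes\mathfrak m_A$, with $a=(a_n)$, restricts at the vertex to $e^{a_0}$ acting on $l$ and along the edge of $\Delta^1$ to an element $b\in\g_1^{-1}\otimes\mathfrak m_A$ conjugating the pair $(m_0,m_1)$; this is exactly the equivalence relation $\sim$ defining $H^1_{\rm sc}(\exp\g^\Delta)$. So the map $\Def_{\Tot(\g^\Delta)}\to H^1_{\rm sc}(\exp\g^\Delta)$ is well-defined, and the explicit inverse constructed above shows it is bijective, functorially in $A$. One should also verify the construction is compatible with morphisms $A\to A'$ in $\Art_\K$, which is routine since everything is given by universal polynomial formulas.

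The main obstacle I expect is bookkeeping: matching the \emph{signs} and the exact form of the Baker-Campbell-Hausdorff/$\bullet$-operations appearing in the definition of $Z^1_{\rm sc}$ with what Stokes' theorem produces from the polynomial-forms model of $\Tot$. Getting the $2$-simplex contribution to yield precisely $\partial_{0,2}m\bullet-\partial_{1,2}m\bullet\partial_{2,2}m=dn+[\partial_{2,2}\partial_{0,1}l,n]$, including the twist of the differential by $[\partial_{2,2}\partial_{0,1}l,-]$, requires care with the way the Maurer-Cartan element twists the complex $\g_2$. A cleaner route, which I would actually follow to avoid reproving known results, is to cite the explicit totalization computations in \cite[Section 3]{FMM} and \cite[Section 3]{semireg2011}, reducing the proof to identifying their output with the functor $H^1_{\rm sc}(\exp\g^\Delta)$ of \cite[Definition 3.3]{FIM}; with the low-degree vanishing hypothesis in hand this identification is essentially forced, and the remaining work is the sign-tracking indicated above.
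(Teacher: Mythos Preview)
The paper does not prove this statement at all: its entire proof is the single line ``See \cite[Theorem 4.10]{FIM}.'' So there is no argument to compare your sketch against---the result is imported wholesale from Fiorenza--Iacono--Martinengo.

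Your outline is a reasonable description of the strategy behind that cited proof (the Dupont retraction/integration-over-simplices picture, homotopy transfer, and the role of the negative-degree vanishing hypothesis), and your closing remark---that the cleaner route is simply to invoke the existing totalization computations---is precisely what the paper does. In context, then, your proposal overshoots: the paper treats this as a black-box citation, and you would be fully justified in doing the same rather than re-deriving the sign and BCH bookkeeping you flag as the main obstacle.
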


\begin{proof}
See \cite[Theorem 4.10]{FIM}.
\end{proof}

\begin{remark}\label{remark funtore lie algebre esplicito}
If each $\g_i$ is concentrated in degree zero, i.e.,
$\g^\Delta$ is a semicosimplicial  Lie
algebra,  then the functor $H^1_{\rm sc}(\exp\g^\Delta)$ has an easier   explicit description \cite[Section 1]{FMM}:
\[
H^1_{\rm sc}(\exp \g^\Delta)\colon \Art_\K \to \Set
\]
\[
H ^1_{sc}(\exp { \g}^\Delta )(A)=\frac{\{ x \in {\g}_1 \otimes
\mathfrak{m}_A   \ |\  e^{\de_{0}x}e^{-\de_{1}x}e^{\de_{2}x}=1
\}}{\sim},
\]
where $x \sim y$ if and only if there exists
$a\in {\g}_0\otimes\mathfrak{m}_A$, such that
$e^{-\de_{1}a}e^{x}e^{\de_{0}a}=e^y$.

\end{remark}

Let $\sL$ be a coherent sheaf of DG-Lie algebras over an algebraic variety $X$ and  $\mathcal{U}=\{U_i\}$  an  affine open cover of $X$. In this case, the \v{C}ech (double) complex $C(\mathcal{U}, \sL)$ of $\sL$ is exactly the total cochain complex  associated with 
the semicosimplicial DG-Lie algebra:
\[ \sL(\mathcal{U}):\quad \xymatrix{ {\prod_i\mathcal{L}(U_i)}
\ar@<2pt>[r]\ar@<-2pt>[r] & { \prod_{i,j}\mathcal{L}(U_{ij})}
      \ar@<4pt>[r] \ar[r] \ar@<-4pt>[r] &
      {\prod_{i,j,k}\mathcal{L}(U_{ijk})}
\ar@<6pt>[r] \ar@<2pt>[r] \ar@<-2pt>[r] \ar@<-6pt>[r]& \cdots},\]
where the face operators  $  \displaystyle \partial_{h}\colon
{\prod_{i_0,\ldots ,i_{k-1}} \sL(U_{i_0 \cdots  i_{k-1}})}\to
{\prod_{i_0,\ldots ,i_k} \sL(U_{i_0 \cdots  i_k})}$
are given by
\[\partial_{h}(x)_{i_0 \ldots i_{k}}={x_{i_0 \ldots
\widehat{i_h} \ldots i_{k}}}_{|U_{i_0 \cdots  i_k}},\qquad
\text{for }h=0,\ldots, k.\]

For notational convenience, we denote by $\Tot(\mathcal{U}, \sL)$ the Thom-Whitney totalization  of the semicosimplicial DG-Lie algebra $\sL(\mathcal{U})$.
Note that in this case 
\begin{equation}\label{eq. comologia tot is hypercoho}
H^i(\Tot(\mathcal{U}, \sL))= H^i(
C(\mathcal{U}, \sL))=\check{\H}^i(\mathcal{U},\sL) = \H^i (X,\sL)\, .  
\end{equation}

\begin{remark} For any   coherent sheaf of DG-Lie algebras  $\sL$,    the quasi-isomorphism class of $\Tot(\mathcal{U}, \sL)$ does not depend on the choice of the affine open cover \cite{FIM}.
If $\sL \to \sG$ is a morphism of coherent sheaves of DG-Lie algebras that is a quasi-isomorphism at every stalk, then for any affine open cover $\mathcal{U}$, the induced morphism of 
DG-Lie algebras  $\Tot(\mathcal{U}, \sL) \to \Tot(\mathcal{U}, \sG)$ is a quasi-isomorphism of DG-Lie algebras.
\end{remark}
 This motivates the following definition.

\begin{definition}\label{definition fascio L controlla}
Let $\sL$  be a   (quasi) coherent sheaf   of DG-Lie algebras over an algebraic variety $X$. Suppose that, for an open affine cover  $\mathcal{U}$ of $X$ (and so for all), the  DG-Lie algebra $\Tot(\mathcal{U}, \sL)$ controls a given deformation problem, then 
we also  say that $\sL$  controls the deformation problem.
Note that in this case   $\H^1 (X,\sL)$ is the tangent space and $\H^2 (X,\sL)$ is an obstructions  space for the controlled deformation problem.

\end{definition}

\bigskip
\section{Deformations of pairs (scheme, coherent sheaf)}\label{sezione deformazioni coppie geoemtrica}

From now on, unless otherwise specified,  we  denote by $X$ a scheme of finite type over the field $\K$.

\begin{definition}
Let   $ \sF$  be a coherent sheaf on $X$. An infinitesimal deformation of the pair $(X, \sF)$ over  $A\in \Art_\K$ is the data $(\mathcal{X}_A, \sF_A)$ where:
\begin{itemize}
\item $\mathcal{X}_A$ is an infinitesimal deformation  of $X$ over $A$, i.e., a pull-back  diagram
\begin{center}
$\xymatrix{ X \ar[r] \ar[d] & \mathcal{X}_A  \ar[d]^\pi  \\
            \Spec \K \ar[r]  &     \Spec A,\\ }$
\end{center}
where $\pi$ is flat;
\item $ \sF_A$ is a coherent sheaf of $\Oh_{\mathcal{X}_A}$-modules on $\mathcal{X}_A$, flat over $A$ and a morphism $\sF_A \to \sF$, inducing an isomorphism $\sF_A \otimes_{\Oh_{\mathcal{X}_A}} \Oh_\mathcal{X} \to \sF$.
\end{itemize}
\end{definition}

 \begin{definition}
Let   $ \sF$ be a coherent sheaf on a scheme $X$.  
Two infinitesimal deformations $(\mathcal{X}_A, \sF_A)$  and $(\mathcal{X}_A', \sF_A')$ 
of  the pair $(X, \sF)$ over $A$ are isomorphic, if there exist  an isomorphism of
deformations $f\colon \mathcal{X}_A \to \mathcal{X}_A'$  and an isomorphism $  \sF_A \to f^*  \sF_A'$ of
coherent sheaves of $\Oh_{\mathcal{X}_A}$-modules.
 
\end{definition}

We recall that the trivial infinitesimal deformation of $(X, \sF)$ over  $A\in \Art_\K$ is given by the pair $(X \times \Spec A, \sF\otimes_{\Oh_\mathcal{X}}\Oh_\mathcal{X \times \Spec A}=\sF\otimes_\K A )$.

An infinitesimal deformation  of $(X, \sF)$ is locally trivial, if it is locally, in $X$, isomorphic to the trivial infinitesimal deformation.

\begin{definition}
Let  $\sF$ be a coherent sheaf on  a scheme $X$.
The functor of infinitesimal deformations   of the pair  $(X, \sF)$ is the  functor
$$
\Def_{(X,\sF)} : \Art_\K \to \Set,
$$
$$
\Def_{(X,\sF)}(A)= \{
 \mbox{isomorphism classes of deformations of the pair $(X, \sF)$  over
 $A$} \}.
$$
The 
 functor of locally trivial infinitesimal  deformations   of the pair  $(X, \sF)$ is the  functor
$$
\Def^{\text{ lt}}_{(X,\sF)} : \Art_\K \to \Set,
$$
$$
\Def^{\text{ lt}}_{(X,\sF)}(A)= \{
 \mbox{isomorphism classes of locally trivial deformations of the pair $(X, \sF)$  over
 $A$} \}.
$$

\end{definition} 
  
\begin{remark}
If $X$ is a smooth variety, then any infinitesimal deformation of $X$ is locally trivial. If $\sF$ is a locally free sheaf over a variety $X$ then any infinitesimal deformation of the sheaf $\sf$ is locally trivial. Then, there exists an isomorphism of functors $\Def^{\text{ lt}}_{(X,\sF)}\cong \Def_{(X,\sF)}$,  for a locally free sheaf $\sF$ on a smooth variety $X$.

\end{remark}

\subsection{Locally trivial infinitesimal deformations}

Let us analyse the data that define a locally trivial infinitesimal deformation $(\mathcal{X}_A, \sF_A)$ of $(X, \sF)$  over $A$. Let  $\mathcal{U}=\{U_i\}_{i\in I}$  be an affine open cover of $X$. 
The deformation $\sX_A$ of $X$ is  trivial over every affine open subset; thus for any $i \in I$ there exists an isomorphism
\[
\alpha_i \colon \Oh_{X}(U_i)\otimes A \to   \Oh_{\mathcal{X}_A}(U_i). \]
On every $U_i\times \Spec A$, we have the sheaf $\sF_{|U_i} \otimes A$  of $\Oh_{X}(U_i)\otimes A$-modules. Since $\alpha_i$ is an isomorphism, we can view $\sF_{|U_i} \otimes A$ as a sheaf of $ \Oh_{\mathcal{X}_A}(U_i)$-modules; the 
 $ \Oh_{\mathcal{X}_A}(U_i)$-modules structure on $\sF_{|U_i} \otimes A$ is given by:  $s\cdot x={\alpha_i}^{-1}(s)x$, for any $s \in  \Oh_{\mathcal{X}_A}(U_i)$ and $x\in \sF_{|U_i} \otimes A$.

To give a locally trivial deformation $\sF_A$ of the sheaf $\sF$ over $X_A$, we need to glue the sheaves  $\sF_{|U_i} \otimes A$ of $ \Oh_{\mathcal{X}_A}(U_i)$-modules over double intersections.
The isomorphisms
\[
{\alpha_i}_{|U_{ij}} \colon \Oh_{X}(U_{ij})\otimes A \to   \Oh_{\mathcal{X}_A}(U_{ij})  \qquad  \mbox{ and }\qquad {\alpha_j}_{|U_{ij}} \colon \Oh_{ X}(U_{ij})\otimes A \to   \Oh_{\mathcal{X}_A}(U_{ij}) 
\]
induce a structure of sheaf  of $\Oh_{\mathcal{X}_A}(U_{ij}) $-module  on the sheaves ${\sF_{|U_i} \otimes A}_{|U_{ij}}$ and ${\sF_{|U_i} \otimes A}_{|U_{ij}}$, respectively.
To glue these sheaves, we need an isomorphism $\psi_{ij}: {\sF_{|U_i} \otimes A}_{|U_{ij}} \to {\sF_{|U_i} \otimes A}_{|U_{ij}}$ of $\Oh_{\mathcal{X}_A}(U_{ij}) $-modules, i.e., for all $s \in \Oh_{\mathcal{X}_A}(U_{ij}) $, $\psi_{ij} (s\cdot x)= s \cdot \psi(x)$.
By the   explicit definition of $\Oh_{\mathcal{X}_A}(U_{ij}) $-modules structures, we have
\[
\psi_{ij} (s\cdot x)= \psi_{ij} ( {\alpha_i}_{|U_{ij}}^{-1}(s) x) \qquad  \mbox{ and }\qquad s \cdot  \psi_{ij}(x)=
 {\alpha_j}_{|U_{ij}}^{-1}(s) \psi_{ij} (x).
\]
Since ${\alpha_i}_{|U_{ij}}$ is an isomorphism, there exists $t \in \Oh_{X}(U_{ij})\otimes A$ such that $ s=  {\alpha_i}_{|U_{ij}}(t)$. Therefore, the linearity reads as follows 
\[
\psi_{ij} ( t x)=
 {\alpha_j}_{|U_{ij}}^{-1} {\alpha_i}_{|U_{ij}}(t) \psi_{ij} (x),
\]
for any $t \in \Oh_{X}(U_{ij})\otimes A$ and $x \in \sF_{|U_{ij}} \otimes A$.

In conclusion, in order to   define a locally trivial infinitesimal deformation of $(X, \sF)$  over $A$, it is enough to give, for any $U_{ij}$,  an automorphism $\theta_{ij}={\alpha_j}^{-1}_{|U_{ij}} {\alpha_i}_{|U_{ij}}$ of $ \Oh_X(U_{ij})\otimes A$, satisfying the cocycle condition on the triple intersections, an automorphism  $\psi_{ij} \in \Hom_\K(\sF_{|U_{ij}} \otimes A, \sF_{|U_{ij}} \otimes A)$ satisfying the cocycle condition on the triple intersections and such that $\psi_{ij} ( t x)=\theta_{ij}(t) (x)$,
for any $t \in \Oh_{X}(U_{ij})\otimes A$ and $x \in \sF_{|U_{ij}} \otimes A$.

Since we are in characteristic zero, according to Lemma \ref{lemma exp auto}, we  can take the logarithm  
and conclude that  $(\theta_{ij}, \psi_{ij}) =e^{x_{ij}}$,  where
$x_{ij}\in\sD_\K(X,\sF)(U_{ij})\otimes\mathfrak{m}_A$; the
condition of gluing on triple intersections is equivalent to
\[
e^{x_{jk}}e^{-x_{ik}}e^{x_{ij}}=1 \in
 \exp(\sD_\K(X,\sF)(U_{ijk})\otimes\mathfrak{m}_A),
\qquad \forall \ i,j,k \in I.
\]

\bigskip
As regard the isomorphisms,  suppose that  $(\mathcal{X}_A, \sF_A)$  and $(\mathcal{X}_A', \sF_A')$ are locally trivial isomorphic infinitesimal deformations of  the pair $(X, \sF)$ over $A$. Then, there exist  an isomorphism of deformations $f\colon \mathcal{X}_A \to \mathcal{X}_A'$  and an isomorphism 
$ \phi\colon \sF_A \to f^*  \sF_A'$ of
coherent sheaves of $\Oh_{\mathcal{X}_A}$-modules.

Let  $\mathcal{U}=\{U_i\}_{i\in I}$  be an affine open cover of $X$. 
Then, for each $i \in I$, there exists an induced automorphism of $\Oh_{X}(U_i)\otimes A$:
\[b_i={\alpha'_i}^{-1}   f^{-1}_{| U_i} \alpha_i:\]
\[
\Oh_{X}(U_i)\otimes A
\xrightarrow{\;\alpha_i\;}\Oh_{\mathcal{X}_A}(U_i)
\xrightarrow{\,f^{-1}_{|U_i}\,}\Oh_{\mathcal{X'}_A}(U_i)
\xrightarrow{\,{\alpha'_i}^{-1}\,}\Oh_{X}(U_i)\otimes A.\]
Therefore, ${\alpha'_i} b_i= f^{-1}_{| U_i} \alpha_i$
and so on $U_{ij}$ we have
$$
({\alpha'_i} b_i)^{-1}({\alpha'_j} b_j) _{| U_{ij}} ={\alpha_i}^{-1}
 f_{| U_{ij}}   f^{-1}_{| U_{ij}} \alpha_j={{\alpha_i}^{-1}
 \alpha_j }_{| U_{ij}};
$$
this implies
$$
{b_i}^{-1} {\theta_{ji}'}b_j= \theta_{ji} \quad \mbox{ i.e.,  } \quad \theta_{ij}= {b_j}^{-1} {\theta_{ij}'}b_i ,
$$
where $\theta_{ij}={\alpha_j}^{-1} {\alpha_i} $ and $\theta_{ij}'={\alpha'_j}^{-1} {\alpha'_i}$ are the automorphisms
 of  $\Oh_X(U_{ij})\otimes A$ corresponding to the deformations $\mathcal{X}_A$ and  $\mathcal{X}_A'$, respectively. 

As regards the sheaves,  $\phi_{U_i}: {\sF_A} _{|U_i} \to f^*  {\sF_A'}_{|U_i}$ is an isomorphism of $\Oh_{\mathcal{X}_A}(U_i)$-modules, for any $i$. The structure  of $\Oh_{\mathcal{X}_A}$-modules on ${\sF_A} _{|U_i}$ is given by $\alpha_i$, while the structure on $f^*  {\sF_A'}_{|U_i}$ si given by $s\cdot x={\alpha'_i}^{-1} f^{-1} (s) x$, for every $s \in  \Oh_{\mathcal{X}_A}(U_i)$.
Therefore, $\phi_{U_i}$ has to satisfy  $\phi_{U_i} (sx)= s \phi_{U_i} (x)$, 
for all $s \in \Oh_{\mathcal{X}_A}(U_i)$, where
\[
\phi_{U_i}(sx)= \phi_{U_i} ( {\alpha_i} ^{-1}(s) x) \qquad  \mbox{ and }\qquad s \phi_{U_i}(x)=
{\alpha'_i}^{-1} f^{-1} (s)   \phi_{U_i}(x).
\]
Since ${\alpha_i}$ is an isomorphism, there exists $t \in \Oh_{X}(U_{i})\otimes A$ such that $ s=  {\alpha_i}  (t)$. Therefore, we have  
\[
\phi_{U_i}(tx)={\alpha'_i}^{-1}( f^{-1} ({\alpha_i}(t))  ) \phi_{U_i}(x)=
b_i(t) \phi_{U_i}(x),
\]
for any $t \in \Oh_{X}(U_{i})\otimes A$ and $x \in \sF_{|U_{ij}} \otimes A$.
Moreover, let $\psi_{ij} \in \Hom_\K(\sF_{|U_{ij}} \otimes A, \sF_{|U_{ij}} \otimes A)$ and $\psi_{ij}' \in \Hom_\K(\sF_{|U_{ij}} \otimes A, \sF_{|U_{ij}} \otimes A)$ the automorphism of  $\Oh_{\mathcal{X}_A}(U_{ij}) $-modules corresponding to the deformations $\sF_A$  and $  \sF_A'$, respectively;  then, the following diagram has to commute:
\begin{center}
$\xymatrix{ {\sF_{|U_i} \otimes A}_{|U_{ij}} \ar[r]^{{\phi_i}_{|U_{ij}}}\ \ar[d]^{\psi_{ij}} &{\sF_{|U_i} \otimes A}_{|U_{ij}}  \ar[d]^{\psi'_{ij}}  \\
            {\sF_{|U_j} \otimes A}_{|U_{ij}} \ar[r]^{{\phi_j}_{|U_{ij}}}  &    {\sF_{|U_j} \otimes A}_{|U_{ij}},\\ }$
\end{center}
i.e., $\psi'_{ij}{\phi_i}_{|U_{ij}}={\phi_j}_{|U_{ij}}\psi_{ij}$ that is 
 ${\phi_j}_{|U_{ij}}^{-1}\psi'_{ij}{\phi_i}_{|U_{ij}}=\psi_{ij}$

In conclusion, let  $(\mathcal{X}_A, \sF_A)$  and $(\mathcal{X}_A', \sF_A')$ be two locally trivial isomorphic deformations of  the pair $(X, \sF)$ over $A$, corresponding to the elements $\{(\theta_{ij},\psi_{ij} )\} $ and $\{(\theta'_{ij},\psi'_{ij} )\} $. The isomorphism given by   $f: \mathcal{X}_A \to \mathcal{X}_A'$  and  $ \phi: \sF_A \to f^*  \sF_A'$  corresponds to the elements 
$(b_i,\phi_{i}) \in \sD_A(\Oh_X(U_i)\otimes A,\sF(U_i)\otimes A)$, for any $i$, such that $
{b_i}^{-1} {\theta_{ij}'} b_j= \theta_{ij}$ and 
${\phi_i}^{-1} {\psi_{ij}'}\phi_j= \psi_{ij}$, for any $i$ and $j$.

Since we are in characteristic zero, according to Lemma \ref{lemma exp auto},  we  can take the logarithms  and write $(b_i,\phi_{i}) =e^{a_{i}}$,  where
$a_{i}\in\sD_\K(X,\sF)(U_{i})\otimes\mathfrak{m}_A$. The
condition of gluing  is equivalent to
\[e^{-a_i}e^{x_{ij}}e^{a_j}=e^{x'_{ij}}\;,\] 
where $\{e^{x_{ij}}=(\theta_{ij},\psi_{ij} )\} $ and $\{e^{x'_{ij}}=(\theta'_{ij},\psi'_{ij} )\} $.

\begin{theorem}\label{thereom D1 controlla def coppia loc trivial }
Let $ \sF$ be a coherent sheaf on  a scheme $X$ of finite type over  the field $\K$. Then, the
coherent sheaf   of  Lie algebras $ \sD_\K(X,\sF)$ controls the problem of locally trivial infinitesimal deformations of the pair $(X,\sF)$.
  
 
\end{theorem}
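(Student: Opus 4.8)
The plan is to unwind the meaning of ``controls'' given in Definition~\ref{definition fascio L controlla}: it suffices, after fixing an affine open cover $\mathcal{U}=\{U_i\}_{i\in I}$ of $X$, to produce an isomorphism of functors of Artin rings $\Def^{\text{ lt}}_{(X,\sF)}\cong\Def_{\Tot(\mathcal{U},\sD_\K(X,\sF))}$ natural in $A\in\Art_\K$, the ``and so for all'' clause of Definition~\ref{definition fascio L controlla} being justified by the cover-independence of the Thom--Whitney totalization recalled just before it. By Proposition~\ref{propo D sono fasci coerenti}, $\sD_\K(X,\sF)$ is a coherent sheaf of Lie algebras with $\sD_\K(X,\sF)(U_{i_0\cdots i_k})=D_\K(\Oh_X(U_{i_0\cdots i_k}),\sF(U_{i_0\cdots i_k}))$, so the \v{C}ech semicosimplicial object $\sD_\K(X,\sF)(\mathcal{U})$ is a semicosimplicial Lie algebra, i.e.\ is concentrated in degree zero.

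Next I would invoke the isomorphism \eqref{eq iso funtori Def H1} (whose hypotheses are trivially met, since $\sD_\K(X,\sF)(\mathcal{U})$ sits in degree zero) together with the simplified description of $H^1_{\rm sc}$ in Remark~\ref{remark funtore lie algebre esplicito}. This identifies $\Def_{\Tot(\mathcal{U},\sD_\K(X,\sF))}(A)$ with the set of collections $\{x_{ij}\}$, $x_{ij}\in\sD_\K(X,\sF)(U_{ij})\otimes\mathfrak{m}_A$, satisfying the cocycle condition $e^{x_{jk}}e^{-x_{ik}}e^{x_{ij}}=1$ in $\exp(\sD_\K(X,\sF)(U_{ijk})\otimes\mathfrak{m}_A)$, modulo the equivalence relation identifying $\{x_{ij}\}$ with $\{x'_{ij}\}$ whenever there exist $a_i\in\sD_\K(X,\sF)(U_i)\otimes\mathfrak{m}_A$ with $e^{-a_i}e^{x_{ij}}e^{a_j}=e^{x'_{ij}}$.

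The heart of the argument is then the observation that this is exactly the set produced by the geometric analysis carried out just above the statement. A locally trivial deformation of the pair is trivial over every affine open subset --- the one point where local triviality, rather than flatness alone, is used --- so one may pick trivializations $\alpha_i$ of $\mathcal{X}_A$ over each $U_i$; together with the gluing isomorphisms $\psi_{ij}$ of $\sF_A$, the deformation is encoded by automorphisms $(\theta_{ij},\psi_{ij})$ of the pair $(\Oh_X(U_{ij})\otimes A,\sF(U_{ij})\otimes A)$ lifting the identity, and by Lemma~\ref{lemma exp auto} these are precisely the exponentials $e^{x_{ij}}$ of elements $x_{ij}\in\sD_\K(X,\sF)(U_{ij})\otimes\mathfrak{m}_A$, with the triple-overlap compatibility translating into $e^{x_{jk}}e^{-x_{ik}}e^{x_{ij}}=1$. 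Conversely, gluing the trivial local deformations of the pair over the $U_i$ along such a cocycle yields a flat, locally trivial deformation of $(X,\sF)$. Finally, replacing the $\alpha_i$ by other trivializations, or passing to an isomorphic deformation $(f,\phi)$, produces exactly pairs $(b_i,\phi_i)=e^{a_i}$ with $a_i\in\sD_\K(X,\sF)(U_i)\otimes\mathfrak{m}_A$ relating the associated cocycles by $e^{-a_i}e^{x_{ij}}e^{a_j}=e^{x'_{ij}}$, which is the relation above. Naturality in $A$ is routine, and this gives the desired isomorphism of functors.

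I expect the main obstacle to be not any single hard computation but the bookkeeping needed to upgrade this dictionary into an honest natural isomorphism of functors: in particular, checking that every isomorphism of locally trivial deformations (including automorphisms of a fixed one) is accounted for by $\sim$, that conversely every such relation is realized by an isomorphism of deformations, and that the reverse gluing construction indeed outputs flat families of schemes together with flat coherent sheaves on them. All of these steps, however, are essentially contained in the analysis preceding the statement, so that the proof reduces to organizing those computations and applying Remark~\ref{remark funtore lie algebre esplicito}.
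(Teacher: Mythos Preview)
Your proposal is correct and follows essentially the same approach as the paper: reduce via Definition~\ref{definition fascio L controlla} to exhibiting an isomorphism $\Def_{\Tot(\mathcal{U},\sD_\K(X,\sF))}\cong\Def^{\text{ lt}}_{(X,\sF)}$, use the explicit description of the left-hand side from Remark~\ref{remark funtore lie algebre esplicito} (together with \eqref{eq iso funtori Def H1}), and match it against the geometric analysis of locally trivial deformations carried out immediately before the statement. The only difference is that you spell out a few steps (invoking Proposition~\ref{propo D sono fasci coerenti}, Lemma~\ref{lemma exp auto}, and the intermediate isomorphism \eqref{eq iso funtori Def H1}) that the paper leaves implicit by simply pointing to Remark~\ref{remark funtore lie algebre esplicito} and ``the previous consideration''.
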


\begin{proof}

According to Definition \ref{definition fascio L controlla}, we have to prove that for any affine open cover $\mathcal{U}=\{U_i\}$ of $X$, there exists an isomorphism of functors of Artin rings
\[
\Def_{\Tot(\mathcal{U}, \sD_\K(X,\sF))} \to \Def^{\text{ lt}}_{(X, \sF)}.
\]
This follows from the previous consideration and from the explicit description of  the functor $\Def_{\Tot(\sU,\sD_\K(X,\sF))}$ given in Remark \ref{remark funtore lie algebre esplicito}:   for every  $A \in \Art_\K$, we have
\[
\Def_{\Tot(\mathcal{U}, \sD_\K(X,\sF))}(A) =\frac{\{ \{x_{ij}\} \in \prod_{i,j} \sD_\K(X,\sF)(U_{ij})
 \otimes
\mathfrak{m}_A   \ |\  e^{x_{jk}} e^{-x_{ik}} e^{x_{ij}}=1
\}}{\sim},
\]
where $x \sim y$ if and only if there exists
$\{a_i\} \in \prod_i\sD_\K(X,\sF)(U_{i })\otimes\mathfrak{m}_A$, such that $e^{{-a_i}_{|U_{ij}}}e^{x_{ij}}e^{{a_j}_{|U_{ij}}}=e^{y_{ij}}$.
\end{proof}

\begin{example}

Let $Z$ be a closed subscheme of an algebraic scheme $X$ and denote by $\Theta_X(-\log Z)\subset \Theta_X$
be the subsheaf of tangent vector fields that are tangent to $Z$. It is known that the sheaf 
of Lie algebras $\Theta_X(-\log Z)$  controls the locally trivial infinitesimal deformations of the pair $(X,Z)$
\cite{donacoppie}. By the same argument used at the beginning of Section \ref{sec.lifting}, we have  two morphisms of sheaves of Lie algebras
\[
\Theta_X(-\log Z) \xrightarrow{\;\Delta\;} \sD_\K(X,\Oh_Z) \xrightarrow{\;\alpha\;} \Theta_X(-\log Z),\qquad \alpha\circ \Delta=\Id.
\]
Indeed, the image of the anchor map $\alpha$ preserves the annihilator of the  sheaf $\Oh_Z$ and conversely every derivation $h$ preserving the ideal sheaf of $Z$ induces a derivation of pair $(h,h)$.

The geometrical interpretation of these morphisms is clear:
the morphism $\Delta$ controls the  natural transformation  that  associates with a locally trivial deformation of $(X,Z)$ the corresponding locally trivial deformation of $(X, \Oh_Z)$.
The anchor map $\alpha$ associates with a locally trivial deformation of the pair  $(X, \Oh_Z)$  
the corresponding locally trivial deformation of $(X, \Supp \Oh_Z)$.
 \end{example}\

\subsection{Infinitesimal deformations}

Next, we analyse all the infinitesimal deformations of a pair $(X, \sF)$. We assume that $X$ is smooth and projective, so that every infinitesimal deformation of $X$ is   locally trivial and  every coherent sheaf on $X$ has a finite locally free resolution.

Let $\sE^*\to \sF$ be a  finite locally free resolution of $\sF$, i.e., an exact sequence
\[
0 \to \sE^{-m} \stackrel{d}{\to} \sE^{-m+1}\stackrel{d}{\to}  \cdots \stackrel{d}{\to}\sE^{-1}  \stackrel{d}{\to}  \sE^{0} \to \sF \to 0,
\]
where   $\sE^j$ is a locally free sheaf, for any $j$. According to Example
\ref{example algebroid complex of sheaf}, we can consider the sheaf of DG-Lie algebras $\sD^*_\K(X,\sE^*)$ associated with the complex $\sE^*$.

\begin{lemma}\label{lemma D1 di risoluzioni quasi iso}
Let $\sF$ be a coherent sheaf on a smooth projective variety $X$ and
$f\colon\sE^*\to \sF$  and $g\colon\sG^*\to \sF$ two  finite locally free resolutions of $\sF$. Then, for any   open affine cover $\mathcal{U}=\{U_i\}$  of $X$,      the   DG-Lie algebras $\Tot(\mathcal{U}, \sD^*_\K(X,\sE^*))$ and $\Tot(\mathcal{U},\sD^*_\K(X,\sG^*))$ are quasi-isomorphic.

\end{lemma}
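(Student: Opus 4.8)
The statement is a homotopy-invariance result for $\sD^*_\K(X,\sE^*)$ under change of finite locally free resolution of $\sF$, and the natural strategy is to reduce it to Proposition~\ref{prop.trivialcofib} and Corollary~\ref{cor.homotpyinvariance}, which already establish the analogous invariance for the modules $D^*_A(R,P)$ over an affine base. First I would reduce to the case of a morphism of resolutions. Given two finite locally free resolutions $\sE^*\to\sF$ and $\sG^*\to\sF$, a standard argument (lifting the identity of $\sF$ through the projective resolution $\sE^*$, and using that $\sG^*\to\sF$ is a quasi-isomorphism of bounded-above complexes of locally free sheaves) produces a third finite locally free resolution $\sH^*\to\sF$ together with quasi-isomorphisms of complexes $\sE^*\to\sH^*\leftarrow\sG^*$ over $\sF$; concretely one can take $\sH^*$ to be (a locally free replacement of) the fibre product $\sE^*\times_{\sF}\sG^*$, or the mapping cylinder of a chosen comparison map $\sE^*\to\sG^*$. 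So it suffices to treat a single quasi-isomorphism $\phi\colon\sE^*\to\sG^*$ of finite locally free resolutions, and moreover we may assume $\phi$ is degreewise split injective with locally free cokernel $\sQ^*$ (this is exactly the local shape of a cofibration of bounded-above cofibrant complexes, recorded in Section~\ref{sec.lifting}); the cokernel $\sQ^*$ is then a bounded acyclic complex of locally free sheaves.

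\textbf{Key steps.} Having set this up, I would introduce the auxiliary sheaf of DG-Lie algebras $\sD^*_\K(X,\sE^*\xrightarrow{\phi}\sG^*)$ exactly as in Corollary~\ref{cor.liftingproperty} and Proposition~\ref{prop.trivialcofib}, but now sheafified: since each $\sE^i,\sG^i,\sQ^i$ is locally free, Corollary~\ref{cor.liftingproperty} applied stalkwise (equivalently, on each affine open $U_i$, using Proposition~\ref{propo D sono fasci coerenti} and Example~\ref{example algebroid complex of sheaf} to identify sections with $D^*_\K(\Oh_X(U),-)$) yields two short exact sequences of complexes of coherent sheaves
\[
0\to \sD^*_\K(X,\sE^*\xrightarrow{\phi}\sG^*)\to \sD^*_\K(X,\sG^*)\to \HOM^*_{\Oh_X}(\sE^*,\sQ^*)\to 0,
\]
\[
0\to \HOM^*_{\Oh_X}(\sQ^*,\sG^*)\to \sD^*_\K(X,\sE^*\xrightarrow{\phi}\sG^*)\to \sD^*_\K(X,\sE^*)\to 0.
\]
Because $\sQ^*$ is a bounded acyclic complex of locally free sheaves it is contractible (the map $\sQ^*\to\operatorname{Cone}(\Id_{\sQ^*})$ splits), hence $\HOM^*_{\Oh_X}(\sE^*,\sQ^*)$ and $\HOM^*_{\Oh_X}(\sQ^*,\sG^*)$ are contractible complexes of sheaves, so both projections $\sD^*_\K(X,\sE^*\xrightarrow{\phi}\sG^*)\to\sD^*_\K(X,\sG^*)$ and $\sD^*_\K(X,\sE^*\xrightarrow{\phi}\sG^*)\to\sD^*_\K(X,\sE^*)$ are quasi-isomorphisms of complexes of coherent sheaves, in fact quasi-isomorphisms at every stalk. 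Now apply the totalization: for any affine open cover $\mathcal{U}$, a morphism of coherent sheaves of DG-Lie algebras that is a stalkwise quasi-isomorphism induces a quasi-isomorphism on $\Tot(\mathcal{U},-)$ (this is the remark following \eqref{eq. comologia tot is hypercoho}, which itself follows because $\Tot$ of a semicosimplicial DG-Lie algebra computes hypercohomology and a stalkwise quasi-isomorphism of coherent sheaves induces an isomorphism on hypercohomology). Therefore
\[
\Tot(\mathcal{U},\sD^*_\K(X,\sE^*))\;\xleftarrow{\ \sim\ }\;\Tot(\mathcal{U},\sD^*_\K(X,\sE^*\xrightarrow{\phi}\sG^*))\;\xrightarrow{\ \sim\ }\;\Tot(\mathcal{U},\sD^*_\K(X,\sG^*)),
\]
which gives the claimed zig-zag of quasi-isomorphisms, and chaining through $\sH^*$ finishes the general case.

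\textbf{Main obstacle.} The one genuinely nontrivial point is the reduction step: producing, from two arbitrary finite locally free resolutions, a common refinement connected to both by degreewise-split quasi-isomorphisms with locally free cokernel. Over an affine base this is bookkeeping with projective modules as in Corollary~\ref{cor.homotpyinvariance}, but globally one must be a little careful that the intermediate complex can be chosen with \emph{locally free} (not merely locally projective) terms and still bounded — this is where the projective model structure on unbounded complexes and the fact that bounded-above cofibrant complexes have locally free terms (Section~\ref{sec.lifting}) do the work, together with the observation that a coherent sheaf on a quasi-projective variety that is locally free is automatically a vector bundle, so no subtlety about projective-versus-free arises. The rest is a formal transport of Corollary~\ref{cor.liftingproperty} and Proposition~\ref{prop.trivialcofib} from modules to coherent sheaves, plus the homotopy invariance of $\Tot(\mathcal{U},-)$ already recorded in Section~\ref{sec.review}.
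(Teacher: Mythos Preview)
Your overall strategy matches the paper's: bridge $\sE^*$ and $\sG^*$ through a third resolution $\sH^*$, use the auxiliary DG-Lie algebra $\sD^*_\K(X,\sE^*\to\sH^*)$ together with the exact sequences of Corollary~\ref{cor.liftingproperty} and Proposition~\ref{prop.trivialcofib}, and then pass to $\Tot(\mathcal{U},-)$. The paper applies Proposition~\ref{prop.trivialcofib} on each affine open and then invokes that $\Tot$ takes levelwise quasi-isomorphisms of semicosimplicial DG-Lie algebras to quasi-isomorphisms; your sheafified, stalkwise version is an equivalent repackaging.

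There is, however, a genuine gap exactly at the point you flagged as the main obstacle: neither of your candidate constructions for $\sH^*$ works. The fibre product $\sE^*\times_{\sF}\sG^*$ comes with projections \emph{to} $\sE^*$ and $\sG^*$, not inclusions from them, so it yields a cospan rather than the span $\sE^*\to\sH^*\leftarrow\sG^*$ you need. The mapping-cylinder route presupposes a global comparison map $\sE^*\to\sG^*$ over $\sF$, but such a map need not exist: locally free sheaves are not projective objects in $\Oh_X$-modules when $X$ is non-affine, so one cannot in general lift $\sE^0\to\sF$ through the surjection $\sG^0\twoheadrightarrow\sF$. (Concretely, on $\proj^1$ take $\sF=\Oh_p$ a skyscraper, $\sE^0=\Oh$, $\sG^0=\Oh(-1)$; then $\Hom(\Oh,\Oh(-1))=0$.) Your appeal to the projective model structure does not rescue this, since Section~\ref{sec.lifting} concerns complexes of $R$-modules over a ring, and the global analogue for sheaves fails for lack of enough projectives; the issue is not projective-versus-free but global-versus-local projectivity.

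The paper sidesteps this by constructing $\sH^*$ directly as a global bounded locally free resolution containing $\sE^*\oplus\sG^*$ as a degreewise direct summand with locally free cokernel $\sN^*$: start from the surjection $f+g\colon\sE^0\oplus\sG^0\to\sF$ and invoke the Hilbert syzygy theorem (finite global dimension of the smooth projective $X$) to complete $\sE^*\oplus\sG^*$ to a finite locally free resolution $\sH^*\to\sF$. Then on each affine $U$ the inclusions $\sE^*(U)\hookrightarrow\sH^*(U)$ and $\sG^*(U)\hookrightarrow\sH^*(U)$ are trivial cofibrations of bounded cofibrant complexes of $\Oh_X(U)$-modules, and from that point your argument goes through verbatim.
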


\begin{proof}

Since the variety is smooth and projective, by Hilbert Syzygy Theorem, there exist two bounded complexes  $\sH ^*$ and $ \sN ^* $ of locally free sheaves and a commutative diagram of complexes
\begin{center}
$\xymatrix{0 \ar[r]&\sE^* \oplus \sG^* \ar[r] \ar[dr]_{f+g} & \sH ^* \ar[d]^{h}  \ar[r] & \sN ^*   \ar[r] &0  \\
          &  &   \sF &\\ }$
\end{center}
such that $h: \sH ^* \to \sF$ is a quasi-isomorphism; in particular, the induced morphisms $\sE^*\to \sH^*$ and $\sG^*\to \sH^*$ are quasi-isomorphisms.

Let $U$ be an affine open subset of $X$.
Since the sections of a locally free sheaf of $\Oh_X$-module over $U$ is a projective $\Oh_X(U)$-module,
 the injection  $ \sE^*(U) \oplus \sG^*(U) \to \sH^*(U)$ is a cofibration, since  $\sH^*,\sN^*$  are a bounded complexes and therefore $\sH^*(U),\sN^*(U)$ are cofibrant complexes of $\Oh_X(U)$-modules. Since $U$ is affine 
the maps $ \sE^*(U) \oplus \sG^*(U) \to \sF(U)$ and $ \sH^*(U) \to \sF(U)$ are quasi-isomorphisms and then 
the induced maps  $\sE^*(U)\to \sH^*(U)$ and $\sG^*(U)\to \sH^*(U)$ are  trivial cofibrations.

Then, we have the following diagram of morphisms of coherent sheaves of DG-Lie algebras
\[
\sD^*_\K(X,\sE^*) \xleftarrow{\;}\sD^*_\K(X,\sE^*\xrightarrow{f} \sH^*)\xrightarrow{\;}\sD^*_\K(X,\sH^*)\xleftarrow{\;}\sD^*_\K(X,\sG^*\xrightarrow{g} \sH^*)\xrightarrow{\;}\sD^*_\K(X,\sG^*)\,.
\]
According to Propositions~\ref{propo D sono fasci coerenti} and  \ref{prop.trivialcofib}, for every open $U_i\in \sU$,
we have a diagram of quasi-isomorphisms of DG-Lie algebras
\[\begin{matrix}\xymatrix{  \sD^*_\K(X,\sE^*\xrightarrow{f} \sH^*)(U_i)\ar[dr]\ar[d] &   & \sD^*_\K(X,\sG^*\xrightarrow{g} \sH^*)(U_i) \ar[dl]\ar[d] &    \\
    \sD^*_\K(X,\sE^*)(U_i)        & \sD^*_\K(X,\sH^*)(U_i) & \sD^*_\K(X,\sG^*)(U_i)}
    \end{matrix}.\]
Finally,  applying the $\Tot$ functor, we conclude that $\Tot(\mathcal{U}, \sD^*_\K(X,\sE^*))$ and $\Tot(\mathcal{U},\sD^*_\K(X,\sG^*))$ are quasi-isomorphic
DG-Lie algebras.
\end{proof}

\begin{definition}\label{definizione T i}
Let $\sF$ be a coherent sheaf on a smooth projective variety $X$. For any $i\in \Z$, we define the coherent sheaves $\sT^i_{(X,\sF)}$ as the cohomology sheaves of $\sD^*_\K(X,\sE^*)$, for any  finite locally free resolution $\sE^*\to \sF$ of $\sF$:
\[
\sT^i_{(X,\sF)}:= \sH^i( \sD^*_\K(X,\sE^*))\, .\]
Similarly, we define the hyper-cohomology groups $T^i_{(X,\sF)}
= \H^i( X,\sD^*_\K(X,\sE^*))$.
Lemma \ref{lemma D1 di risoluzioni quasi iso} implies that the sheaves $\sT^i_{(X,\sF)}$ and the  groups $T^i_{(X,\sF)}$ are well defined, since  they do not depend on the choice of the resolution. \end{definition}

The short exact sequence \eqref{eq atyah sequence for complex} of complexes of coherent sheaves
\[ 0\to  \HOM_{\Oh_X}^*(\sE^*,\sE^*)\to \sD^*_\K(X,\sE^*)\to \Theta_X\to 0\]
gives a hypercohomology long exact sequence
\begin{equation}\label{equ.lungaipercoomologia}
0\to \Ext^0_X(\sF,\sF)\to T^0_{(X,\sF)}\to H^0(X,\Theta_X)\to \Ext^0_X(\sF,\sF)\to T^1_{(X,\sF)}\to\cdots \;.
\end{equation}

By the hypercohomology spectral sequence, we have
$E_2^{p,q}= H^p(X, \sT^q_{(X,\sF)}) \Rightarrow T^{p+q}_{(X,\sF)}$.
According to Example \ref{example successione esatta comologia per moduli}, we have that $ \sT^i_{(X,\sF)} =\EXT_{\Oh_X}^i(\sF,\sF)$, for $i\neq 0,1$, and  we have  an exact sequence of sheaves
\[ 0\to \HOM_{\Oh_X}(\sF,\sF)\to  \sT^0_{(X,\sF)}\to \Theta_X \to 
{\operatorname{\mathcal E}\!\!xt}_{\Oh_X}^1(\sF,\sF)\to \sT^1_{(X,\sF)}\to 0\,.\] 
In particular, the sheaf $\sT^i_{(X,\sF)}$ vanishes in the locus where $\sF$ has projective dimension
$<i$.

\medskip

{\bf Local Case.} First of all, we analyse the infinitesimal deformations of the pair $(X, \sF)$ in the local case.
Let $X = \Spec R$, where $R$ is smooth and affine over $\K$  and  $\sF =\widetilde{M}$, where $M$ is a finitely generated $R$-module.
Let $(E^*,d)$ be a finite complex of projective  $R$-module, such that the sequence
\[
0 \to  E^{-m} \stackrel{d}{\to}  E^{-m+1}\stackrel{d}{\to}  \cdots \stackrel{d}{\to}E^{-1}  \stackrel{d}{\to}  E^{0} \to M  \to 0,
\]
is exact. Finally, consider  the DG-Lie algebra  $D^*_\K(R,E^*)$.

\begin{proposition}\label{proposition DGLA local}
In the notation above, the DG-Lie algebra $D^*_\K(R,E^*)$ controls the infinitesimal deformations of the pair $(X, \sF )$,
i.e., there exists an isomorphism  of deformation functors
\[
\Def_{D^*_\K(R,E^*) } \cong
\Def_{(X, \sF )}.
\]
\end{proposition}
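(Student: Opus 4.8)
The plan is to identify $\Def_{D^*_\K(R,E^*)}$ with $\Def_{(X,\sF)}$ by unravelling both sides into explicit Maurer--Cartan data. On the left, since $R$ is affine and $E^*$ is a complex of projectives, the DG-Lie algebra $D^*_\K(R,E^*)$ sits in the short exact sequence \eqref{equ.anchorcofibrant}
\[ 0\to \Hom^*_R(E^*,E^*)\to D^*_\K(R,E^*)\xrightarrow{\alpha} \Der_\K(R,R)\to 0, \]
so a Maurer--Cartan element in $D^1_\K(R,E^*)\otimes\mathfrak m_A=\Hom^1_R(E^*,E^*)\otimes\mathfrak m_A$ is a pair consisting of a deformation of the differential $d$ of the complex $E^*$ (deforming the resolution, hence $M$, as an $R$-module, flatly over $A$) together with the trivial deformation of $R$ itself; the gauge group $\exp(D^0_\K(R,E^*)\otimes\mathfrak m_A)$ acts by $\K$-linear automorphisms of the pair $(R,E^*)$ lifting the identity, which by Lemma~\ref{lemma exp auto} and Lemma~\ref{lem.liftingproperty} is exactly the group of automorphisms of the resolution covering automorphisms of $R\otimes A$. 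I would first make this dictionary precise, using that $\Der^i_\K(R,R)=0$ for $i\neq 0$ so that the degree-one part of $D^*_\K(R,E^*)$ really is just $\Hom^1_R(E^*,E^*)$.

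**Key steps.** (1) Given $\xi\in\MC_{D^*_\K(R,E^*)}(A)$, write $\xi$ as a collection of maps $\xi_j\colon E^j\otimes A\to E^{j+1}\otimes A$; the Maurer--Cartan equation $d\xi+\tfrac12[\xi,\xi]=0$ says precisely that $d_A:=d+\xi$ squares to zero, i.e.\ $(E^*\otimes A, d_A)$ is a complex. (2) Show that $(E^*\otimes A,d_A)$ is still a resolution of a flat $A$-deformation $M_A$ of $M$: acyclicity in degrees $<0$ is an open/Nakayama-type condition that survives the deformation because $E^{-m}\otimes A\to M$ stays surjective mod $\mathfrak m_A$ and $A$ is Artinian, and flatness of $M_A$ follows from the local criterion applied to the (now $A$-flat, since each $E^j$ is free) resolution. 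Since $R$ is unchanged, this is a deformation of the pair $(X,\sF)$ over $A$ with trivial deformation of $X$ — which is fine because $X=\Spec R$ is smooth affine, hence has no nontrivial infinitesimal deformations. (3) Conversely, every deformation $(\mathcal X_A,\sF_A)$ of the pair has $\mathcal X_A\cong X\times\Spec A$ (smooth affine), and $\sF_A$, being $A$-flat, lifts the resolution $E^*\to M$ to an $A$-flat resolution $E^*\otimes A\to\sF_A$ by projectives (lifting projectives over the nilpotent extension $R\otimes A\to R$), whose differential has the form $d+\xi$ with $\xi$ a Maurer--Cartan element. (4) Match the equivalence relations: two Maurer--Cartan elements are gauge equivalent iff the corresponding deformed complexes are isomorphic over $R\otimes A$ compatibly with the augmentation to $M$, which translates exactly into an isomorphism of the deformations $(X\times\Spec A,\sF_A)$. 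Throughout, invoke Example~\ref{example successione esatta comologia per moduli} to know that $H^0(D^*_\K(R,E^*))=D_\K(R,M)$ and $H^1=\Ext^1_R(M,M)$, so the tangent and obstruction spaces on the DG-Lie side agree with the expected tangent/obstruction spaces of $\Def_{(X,\sF)}$.

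**Main obstacle.** The subtle point is step (2)/(3): showing that deforming the differential of a fixed projective complex is the same as deforming the module it resolves, and that every module deformation arises this way, with the equivalences matching up. Concretely, one must check that (a) a flat deformation $M_A$ always admits a lift of the chosen resolution (this is standard since the $E^j$ are projective and $R\to R\otimes A$ is a nilpotent thickening, but one should note that flatness of $M_A$ plus exactness mod $\mathfrak m_A$ forces exactness of the lifted complex), and (b) different lifts of the resolution, or lifts of isomorphic $M_A$'s, differ by gauge. Rather than redo this from scratch, I would invoke the established fact (see \cite{FIM}) that for a coherent sheaf on any scheme the DG-Lie algebra of endomorphisms $\Hom^*_{\Oh}(\sE^*,\sE^*)$ of a locally free resolution controls $\Def_{\sF}$; combined with the split exact sequence \eqref{equ.anchorcofibrant}, whose kernel is exactly this endomorphism DG-Lie algebra and whose quotient $\Der_\K(R,R)$ has $\Def_{\Der_\K(R,R)}$ trivial (as $X=\Spec R$ is smooth), Remark~\ref{rem.basictheorem} does not apply directly (the sequence need not split as DG-Lie algebras), so instead I would argue that the obvious map of functors $\Def_{\Hom^*_{\Oh_X}(\sE^*,\sE^*)}\to\Def_{D^*_\K(R,E^*)}$ is an isomorphism because it is so on tangent spaces ($\Ext^1_R(M,M)\to H^1(D^*)$, an isomorphism here since $H^1(\Der_\K(R,R))=0$, $R$ being smooth, makes the map $\Ext^1\to H^1(D^*)$ bijective by the long exact sequence) and injective on obstruction spaces ($\Ext^2_R(M,M)\to H^2(D^*)$ injective, since $H^1(\Der_\K(R,R))=H^2(\Der_\K(R,R))=0$). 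This reduces everything to the already-known local statement for $\Def_{\sF}$, giving $\Def_{D^*_\K(R,E^*)}\cong\Def_{\sF}\cong\Def_{(X,\sF)}$, the last isomorphism because over a smooth affine base the pair-deformation functor coincides with the sheaf-deformation functor.
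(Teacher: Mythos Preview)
Your direct outline (steps (1)--(4)) is essentially the paper's argument: Maurer--Cartan elements of $D^*_\K(R,E^*)$ are perturbations $d\rightsquigarrow d+\xi$ of the resolution, any flat deformation $M_A$ lifts to such a perturbation because projectives and relations lift along the nilpotent thickening $R\to R\otimes A$, and the gauge group $\exp(D^0_\K(R,E^*)\otimes\mathfrak m_A)$ is, by Lemma~\ref{lemma exp auto}, exactly the group of automorphisms of the pair $(R\otimes A,E^*\otimes A)$ lifting the identity. The paper carries this out directly; note that in step~(4) you should say ``isomorphic as pairs'' (i.e.\ allowing an automorphism of $R\otimes A$), not ``isomorphic over $R\otimes A$'': the gauge group for $D^*$ is strictly larger than that for $\Hom^*_R(E^*,E^*)$, and it is this enlargement that encodes automorphisms of the underlying scheme.

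The shortcut you propose in the ``main obstacle'' paragraph, however, does not work. You claim that $\Ext^1_R(M,M)\to H^1(D^*_\K(R,E^*))$ is bijective because $H^1(\Der_\K(R,R))=0$, but the long exact sequence attached to \eqref{equ.anchorcofibrant} reads
\[
\cdots\to D_\K(R,M)\to \Der_\K(R,R)\xrightarrow{\ \partial\ }\Ext^1_R(M,M)\to H^1(D^*_\K(R,E^*))\to 0,
\]
and the connecting map $\partial$ is in general nonzero: it vanishes exactly when every derivation of $R$ lifts to $D_\K(R,M)$, which fails already for $M=R/I$ whenever $I$ is not preserved by all derivations (beginning of Section~\ref{sec.lifting}). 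Thus $\Ext^1\to H^1(D^*)$ is only surjective, and correspondingly $\Def_{\sF}\to\Def_{(X,\sF)}$ is surjective but \emph{not} injective: over $A=\K[\epsilon]$, if $h\in\Der_\K(R,R)$ does not lift to $D_\K(R,M)$ then the pullback of the trivial extension $M[\epsilon]$ along $1+\epsilon h$ is a nontrivial element of $\Ext^1_R(M,M)$, yet it represents the trivial pair deformation. So the reduction ``$\Def_{D^*}\cong\Def_{\Hom^*}\cong\Def_{\sF}\cong\Def_{(X,\sF)}$'' breaks at both the first and last isomorphisms. You must instead verify steps (2)/(3) directly as the paper does; the input you need is precisely that the $D^0$-gauge (not the $\Hom^0$-gauge) matches isomorphism of pair deformations.
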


\begin{proof} 
The  variety $X= \Spec R$ is smooth and affine and so it has only trivial infinitesimal deformations \cite[Theorem 1.2.4]{Sernesi}. Therefore,
for any $A\in \Art_\K$,
 any    infinitesimal deformation of the pair $(X,\sF=\widetilde{M})$ over $A$ is of the form $(X \times \Spec A,F_A=\widetilde{M}_A)$, where $M_A$ is an $R \otimes A$-module,   that is  $A$-flat, together with a morphism $\pi: M_A \to M$, inducing an isomorphism $M_A\otimes_A \K \to M$ \cite[Section 1]{FIM}. 

The flatness condition allows to lift the relations; therefore any deformation $M_A$ of $M$ over $A$ corresponds to an exact sequence
$E^*\otimes A \to  M_A$ that reduces to $E^* \to M$, when tensored with $\K$, i.e., any deformation $M_A$ of $M$ over $A$ corresponds to a deformed complex $(E^*\otimes A, d_A)$ and any of these complexes is of the form $(E^*\otimes A, d+x)$, for  $x \in \MC_{D^*_\K(R,E^*)}(A)=\MC_{\Hom^*_\K(E^*,E^*)}(A)$.

Two deformations  $(X \times \Spec A,\sF_A=\widetilde{M}_A)$ and  $(X \times \Spec A,\sF'_A=\widetilde{M}'_A)$ of $(X,\widetilde{M})$ over $A$   are isomorphic if there exist an isomorphism  of
deformations $f\colon X \times \Spec A \to X \times \Spec A$  and an isomorphism $  \sF_A \to f^*  \sF_A'$ of $R \otimes A$-modules.

  In particular, the isomorphism of the modules lifts to an isomorphism of the deformed complexes and vice versa.
Then, an isomorphism is given by a pair $(\theta, \psi)$ where 
$\theta: R\otimes A \to R\otimes A$ is an isomorphism that lifts the identity of $R$ and $\psi \colon (E^*\otimes A, d+x) \to (E^*\otimes A, d+x')$ is an isomorphism, lifting the identity,  such that $\psi(rm)= \theta(r)\psi(m)$, for any $r \in R\otimes A$ and any $m \in E^*\otimes A$.

As in Lemma \ref{lemma exp auto}, for any $A\in \Art_\K$ the group $ \exp(D^0_\K(R,E^*)\otimes\mathfrak{m}_A)$ is naturally isomorphic to the group of $A$-automorphisms of the pair $(R\otimes A, E^* \otimes A)$ lifting the identity on $(R,E^*)$; 
therefore, it  corresponds to the group of the isomorphisms of deformations.

\end{proof}

\begin{example}[cf. {\cite[Proposition A3]{FGvS}}]
Assume that $X$ is  a smooth surface, $D\subset X$ a reduced divisor and
$\sF$ a sheaf of  $\Oh_D$-module.
Assume that for every point $p \in D$, the stalk $\sF_p$ is a torsion free 
$\Oh_{D,p}$-module.
Then, by Auslander-Buchsbaum Theorem \cite[Theorem 19.1]{Matsubook},
the  projective dimension of every stalk of the sheaf $\sF$ is  at most 1.
In particular,  $\sT^i_{(X,\sF)} =\EXT_{\Oh_X}^i(\sF,\sF)=0$, for every $i\geq 2$. 

If $X$ is affine, we have $\Ext_X^2(\sF,\sF)= H^0 ( \EXT_{\Oh_X}^2(\sF,\sF))=0$,  and so  the pair $(X,\sF)$ has unobstructed deformations.

\end{example}

{\bf Global Case.}  Finally, we analyse  all the infinitesimal deformations of a   pair $(X,\sF)$.  Essentially, we
 have to glue together the computations about the locally trivial infinitesimal deformations  (Theorem \ref{thereom D1 controlla def coppia loc trivial }) and the  deformations in the local case (Proposition \ref{proposition DGLA local}).

\begin{theorem}\label{thereom D1 controlla def coppia X F}
Let $ \sF$ be a coherent sheaf on  a projective smooth variety $X$ and $\sE^*\to \sF$  a  finite locally free resolution of $\sF$.
Then, the
coherent sheaf   of DG-Lie algebras $\sD^*_\K(X,\sE^*)$ controls the problem of  deformations of the pair $(X,\sF)$.
In particular, $T^1_{(X,\sF)}$ is the tangent space and $T^2_{(X,\sF)}$ is the obstructions  space for $\Def_{(X, \sF)}$.

\end{theorem}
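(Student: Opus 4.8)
The strategy is to combine the two pieces already established: the local computation (Proposition~\ref{proposition DGLA local}), which tells us that over an affine open $U_i$ the DG-Lie algebra $\sD^*_\K(X,\sE^*)(U_i)\cong D^*_\K(\Oh_X(U_i),\sE^*(U_i))$ controls $\Def_{(X,\sF)|U_i}$, and the gluing formalism of Section~\ref{sec.review}, which produces $\Tot(\mathcal{U},\sD^*_\K(X,\sE^*))$ from the semicosimplicial diagram $U\mapsto \sD^*_\K(X,\sE^*)(U)$ over an affine cover $\mathcal{U}=\{U_i\}$. So I would fix an affine open cover $\mathcal{U}$ of $X$ and prove directly that $\Def_{\Tot(\mathcal{U},\sD^*_\K(X,\sE^*))}\cong \Def_{(X,\sF)}$, following the template of Theorem~\ref{thereom D1 controlla def coppia loc trivial }. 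Since $X$ is smooth projective, every infinitesimal deformation of $X$ is locally trivial, and every coherent sheaf has a finite locally free resolution; moreover a deformation of $\sF$ over $A$ is the same as a deformation of the complex $\sE^*$ (flatness lifts the resolution), so the local data attached to $U_i$ are exactly Maurer–Cartan elements $x_i\in \MC_{\sD^*_\K(X,\sE^*)(U_i)}(A)$, i.e. deformed complexes $(\sE^*(U_i)\otimes A, d+x_i)$ together with the $\Oh$-module structure.

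First I would spell out the descent datum for a deformation $(\mathcal{X}_A,\sF_A)$: trivialize $\mathcal{X}_A$ and $\sF_A$ over each $U_i$ (using $\sE^*$ as a model for $\sF_A$ locally), obtaining $x_i\in (\sD^*_\K(X,\sE^*)(U_i))^1\otimes\mathfrak{m}_A$ with $dx_i+\tfrac12[x_i,x_i]=0$, and then gluing isomorphisms over the double intersections $U_{ij}$, which—exactly as in the argument preceding Theorem~\ref{thereom D1 controlla def coppia loc trivial }, but now carrying the Maurer–Cartan twist $x_i$ and the differential-graded structure—are elements $e^{m_{ij}}$ with $m_{ij}\in (\sD^*_\K(X,\sE^*)(U_{ij}))^0\otimes\mathfrak{m}_A$ satisfying $\partial_{1,1}x_i = e^{m_{ij}}*\partial_{0,1}x_j$ on $U_{ij}$, together with the cocycle condition on triple intersections expressed (up to a degree $-1$ correction, as in the definition of $Z^1_{\rm sc}$) by $\partial_{0,2}m\bullet -\partial_{1,2}m\bullet \partial_{2,2}m = dn+[\cdots,n]$. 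That is precisely an element of $Z^1_{\rm sc}(\exp\sD^*_\K(X,\sE^*)(\mathcal{U}))(A)$; isomorphisms of deformations translate, via Lemma~\ref{lemma exp auto} and the local statement of Proposition~\ref{proposition DGLA local}, into the equivalence relation $\sim$ defining $H^1_{\rm sc}$. Invoking the isomorphism $\Def_{\Tot(\mathcal{U},\sL)}\cong H^1_{\rm sc}(\exp\sL(\mathcal{U}))$ of \eqref{eq iso funtori Def H1} (applicable since $\sD^i_\K(X,\sE^*)=\HOM^i_{\Oh_X}(\sE^*,\sE^*)$ has no cohomology in negative degrees only after taking sections over affines—more precisely one uses that $H^{j}(\sD^*_\K(X,\sE^*)(U_i))=0$ for $j<0$, which follows from $\Ext^j$ vanishing in negative degree as in Example~\ref{example successione esatta comologia per moduli}) then gives the desired isomorphism of functors.

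Independence of the resolution $\sE^*$ is already handled: Lemma~\ref{lemma D1 di risoluzioni quasi iso} shows $\Tot(\mathcal{U},\sD^*_\K(X,\sE^*))$ is quasi-isomorphic, as a DG-Lie algebra, for any two finite locally free resolutions, and quasi-isomorphic DG-Lie algebras have isomorphic deformation functors; likewise independence of the cover $\mathcal{U}$ is the general fact recalled in Section~\ref{sec.review}. Finally, the statements about tangent and obstruction spaces follow from the general theory: $\H^1(X,\sD^*_\K(X,\sE^*))=H^1(\Tot(\mathcal{U},\sD^*_\K(X,\sE^*)))$ is the tangent space and $\H^2$ an obstruction space, and these hypercohomology groups are by Definition~\ref{definizione T i} exactly $T^1_{(X,\sF)}$ and $T^2_{(X,\sF)}$, via \eqref{eq. comologia tot is hypercoho}.

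I expect the main obstacle to be the careful bookkeeping in the descent step—specifically, verifying that the obstruction to the naïve cocycle condition on triple intersections is absorbed exactly by the degree $-1$ term $n\in \g_2^{-1}\otimes\mathfrak{m}_A$ in the definition of $Z^1_{\rm sc}$, and checking the compatibility of the gauge/coboundary relations with $\sim$. This is where the difference between the Lie-algebra case (Theorem~\ref{thereom D1 controlla def coppia loc trivial }, where the simple description of Remark~\ref{remark funtore lie algebre esplicito} applies) and the genuinely DG case shows up, and it is exactly the point at which one must cite and apply \cite[Theorem 4.10]{FIM} rather than reprove it; the rest is a matter of translating the sheaf-theoretic gluing data of a deformation of the pair into this combinatorial package, which is routine once the local case and Lemma~\ref{lemma exp auto} are in hand.
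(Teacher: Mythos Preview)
Your proposal is correct and follows essentially the same approach as the paper: reduce to $H^1_{\rm sc}(\exp\sD^*_\K(X,\sE^*)(\mathcal{U}))$ via \eqref{eq iso funtori Def H1}, identify elements of $Z^1_{\rm sc}$ with local Maurer--Cartan data plus gluing isomorphisms satisfying a cocycle condition up to the degree $-1$ homotopy $n$, and invoke the local case (Proposition~\ref{proposition DGLA local}) together with the analysis of isomorphisms as in Theorem~\ref{thereom D1 controlla def coppia loc trivial }. The paper emphasizes the same key subtlety you flag at the end---that the gluing isomorphisms of the deformed complexes satisfy the cocycle condition only up to homotopy because one is gluing the cohomology sheaf $\sF_A$, not the complexes themselves---and handles the equivalence relation in the same way.
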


\begin{proof} The proof follows the general lines already used in \cite{FIM}.
According to Definition \ref{definition fascio L controlla}, we have to prove that for any affine open cover $\mathcal{U}=\{U_i\}$ of $X$, there exists an isomorphism of functors of Artin rings
\[
\Def_{\Tot(\mathcal{U},\sD^*_\K(X,\sE^*) )} \to \Def_{(X, \sF)}.
\]

According to the isomorphism given in Equation \eqref{eq iso funtori Def H1}, it is enough to show
 that there exists an isomorphism $H^1_{\rm sc}(\exp
(\sD^*_\K(X,\sE^*) (\mathcal{U}))) \to \Def_{(X, \sF)}$. Therefore,  for any $A \in \Art_\K$, 
we have to prove that every element
in  $Z^1_{\rm sc} (\exp (\sD^*_\K(X,\sE^*) (\mathcal{U})))(A)$ corresponds to an infinitesimal deformation of the 
pair $(X, \sF)$ over $A$ and that two elements are  equivalent if and only if the corresponding deformations  are isomorphic.
This follows from the local study analysed in Proposition \ref{proposition DGLA local} and a gluing procedure as in  Theorem \ref{thereom D1 controlla def coppia loc trivial }. Indeed, an element 
$(l,m) \in Z^1_{\rm sc} (\exp (\sD^*_\K(X,\sE^*) (\mathcal{U}))) (A)$ gives a deformation of the pair $(X,\sF)$ as 
gluing of deformations on each $U_i$. We only stress the fact that the  gluing condition on the isomorphisms involves
an element in $\prod_{i,j,k}  \HOM^{-1}_{\Oh_X}( \sE^*,\sE^*)  (U_{ijk}) \otimes \mathfrak m_A $; this is due to the 
fact that we do not have to glue the deformed complexes but rather their cohomology to get a sheaf (see \cite[Section 2]{FIM} for more details about this).  
As regards the equivalence relation $\sim$, the first condition is the isomorphism of the induced deformation
on each open, the second condition gives the gluing of the local isomorphism to have a global isomorphism of the
induced deformations.

More explicitly, an element $(l,m) \in Z^1_{\rm sc} (\exp (\sD^*_\K(X,\sE^*) (\mathcal{U})))(A)$ is given by the 
sequences $l=\{l_i\} \in \prod _i \HOM^1_{\Oh_X}( \sE^*,\sE^*) (U_i) \otimes \mathfrak m_A  
$ and $m = \{(h_{ij}, u_{ij})\} \in \prod_{i,j}  \sD^0_\K(X,\sE^*)(U_{ij})\otimes \mathfrak m_A $ such that:

\begin{enumerate}
\item for any $i$, the element $l_i$ satisfies the Maurer-Cartan equation, i.e., $dl_i+ \frac{1}{2}[l_i,l_i]=0$;
\item for any $i$ and $j$, the restriction of $l_i$ and $l_j$ to $U_{ij}$ are gauge equivalent under $m_{ij}$, i.e.,  $l_i|_{U_{ij}}=e^{m_{ij}}*l_j|_{U_{ij}}$.
\item for any $i,j$ and $k$, we have the following conditions:
\[
h_{jk} |_{U_{ijk}} \bullet - h_{ik}|_{U_{ijk}} \bullet h_{ij}|_{U_{ijk}} = 0
\]
\[
u_{jk} |_{U_{ijk}} \bullet - u_{ik}|_{U_{ijk}} \bullet u_{ij}|_{U_{ijk}} =
[d + l_j |_{U_{ijk}} , n_{ijk}],
\]
for some $n=\{n_{ijk}\} \in \prod_{i,j,k}  \HOM^{-1}_{\Oh_X}( \sE^*,\sE^*)  (U_{ijk}) \otimes \mathfrak m_A $.
\end{enumerate}
According to the local study, condition (1) and (2) imply that every $l_i$ defines a deformation of the pair $(X, \sF)$ on $U_i$ and that the deformations induced by $l_i$ and $l_j$  on $U_{ij}$ are isomorphic.
Finally, the third condition implies that  we can glue together the local deformations to get a global deformation the pair  $(X, \sF)$ over $A$.
Indeed, the former condition is the gluing of the locally trivial deformation of $U_i$ over the double intersections to have a   deformation of $X$ over $A$; the latter regards the gluing of the local deformations of the sheaf $\sF$.
We only stress the fact that the isomorphisms that glues the restriction of the deformed complexes
 $(\sE^*_{U_i} \otimes A, d+l_i)$ and $(\sE^*_{U_j} \otimes A, d+l_j)$, for any $i$ and $j$ satisfy the cocycle condition only up to homotopy. Indeed, we do not have to glue the restriction of the deformed complexes of sheaves together but rather their cohomology. Therefore, the gluing isomorphism have to satisfy the cocycle condition only up to homotopy.

As regard the equivalence relation, suppose that 
$(l_0=\{l_{0,i}\},m_0=   \{(h_{0,ij}, u_{0,ij})\})$ and $(l_1=\{l_{1,i}\},m_1=   \{(h_{1,ij}, u_{1,ij})\})  \in Z^1_{\rm sc}  (\exp
(\sD^*_\K(X,\sE^*) (\mathcal{U})))(A)$
are equivalent under the relation  $\sim$, then we need to prove that the induced deformations are isomorphic. 
The equivalence implies the  existence of $a =\{g_{i}, v_i\} \in  \prod_{i}  \sD^0_{\K}(X,\sE^*)(U_{i})\otimes {\mathfrak m}_A$ and $b= \{b_{ij}\}  \in \prod_{i,j}  \HOM^{-1}_{\Oh_X}( \sE^*,\sE^*)  (U_{ij}) \otimes \mathfrak m_A $ such that
\[
\begin{cases}
e^a * l_0=l_1\\
- m_0\bullet -{a_i}_{|U_{ij}} \bullet m_1
\bullet {a_j}_{|U_{ij}} =db+[{l_{0,j}}_{|U_{ij}} ,b].
\end{cases}
\] 
The first condition implies that the deformations of the pair induced by $l_0$ and $l_1$ are isomorphic  locally on $U_i$;
the second condition provides the gluing of the isomorphisms along double intersections. (We have a strict cocycle condition for the isomorphisms  of the deformations of $X$ and, as before, we have a homotopy 
cocycle condition for the gluing of the deformed complexes.)

As regard the tangent and obstruction space for $\Def_{(X, \sF)}$, we have that $H^1(\Tot(\mathcal{U},\sD^*_\K(X,\sE^*)))$ is the tangent space and $H^2(\Tot(\mathcal{U},\sD^*_\K(X,\sE^*)))$ is an obstruction  space.
According to Definition \ref{definizione T i}  and to Equality \eqref{eq. comologia tot is hypercoho}, we have that 
\[
H^i(\Tot(\sD^*_\K(X,\sE^*) (\mathcal{U}))) \cong \H^i(X, \sD^*_\K(X,\sE^*))=T^i_{(X,\sF)}.
\]

\end{proof}

\begin{remark} In the situation of Theorem~\ref{thereom D1 controlla def coppia X F}, consider the 
sheaf of truncated DG-Lie algebras 
\[ \sigma_{\le0}\sD_{\K}^*(X,\sE^*)=\sD_{\K}(X,\sE^*)\oplus_{i<0}\HOM^i_{\Oh_X}(\sE^*,\sE^*)\subset
\sD_{\K}^*(X,\sE^*)\,.\]
We have seen in Example~\ref{example successione esatta comologia per moduli}, that the natural map 
$\sD_{\K}(X,\sE^*)\to \sD_{\K}(X,\sF)$ induces a  surjective quasi-isomorphism of coherent sheaves  of DG-Lie algebras 
$\sigma_{\le0}\sD_{\K}^*(X,\sE^*)\to \sD_{\K}(X,\sF)$. 
Therefore, also the sheaf of truncated DG-Lie algebras $\sigma_{\le0}\sD_{\K}^*(X,\sE^*)$ controls 
the locally trivial deformations of the pair, and the obvious natural transformation 
$\Def_{(X,\sF)}^{\text{ lt}}\to \Def_{(X,\sF)}$ is induced by the inclusion 
$\sigma_{\le0}\sD_{\K}^*(X,\sE^*)\subset \sD_{\K}^*(X,\sE^*)$. 
\end{remark}

We are now ready to prove the last item of Theorem~\ref{thm.maintheorem}.
  
\begin{proposition}\label{prop.prorepcrit}
Let $ \sF$ be a coherent sheaf on  a projective smooth variety $X$. Assume that $H^0(X,\Theta_X)=0$ and 
$\Ext_X^0(\sF,\sF)=\K$, then the functor $\Def_{(X,\sF)}$ is pro-representable.
\end{proposition}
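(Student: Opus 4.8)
The plan is to reduce the statement to the pro-representability criterion of Lemma~\ref{lem.criteriouniversal}, applied to a DG-Lie algebra controlling $\Def_{(X,\sF)}$. First I would fix a finite locally free resolution $\sE^*\to\sF$ and an affine open cover $\mathcal{U}$ of $X$; by Theorem~\ref{thereom D1 controlla def coppia X F} the DG-Lie algebra $L=\Tot(\mathcal{U},\sD^*_\K(X,\sE^*))$ controls the deformations of the pair, so it suffices to check the two hypotheses of Lemma~\ref{lem.criteriouniversal} for this $L$: that $H^1(L)$ is finite dimensional, and that the natural map $N^0=\{x\in L^0\mid dx=0,\; [x,L^1]=0\}\to H^0(L)$ is surjective.

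The finiteness is immediate: $H^1(L)=T^1_{(X,\sF)}=\H^1(X,\sD^*_\K(X,\sE^*))$, and $\sD^*_\K(X,\sE^*)$ is a bounded complex of coherent sheaves on the projective variety $X$, hence has finite dimensional hypercohomology. For the surjectivity onto $H^0(L)$ I would first identify $H^0(L)=T^0_{(X,\sF)}$ via the long exact sequence \eqref{equ.lungaipercoomologia}: the hypothesis $H^0(X,\Theta_X)=0$ makes $\Ext^0_X(\sF,\sF)\to T^0_{(X,\sF)}$ an isomorphism, and the hypothesis $\Ext^0_X(\sF,\sF)=\K$ then gives $H^0(L)\cong\K$. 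So it suffices to produce one element of $N^0$ with nonzero image in $H^0(L)$. The natural choice is $\iota=(0,\Id_{\sE^*})$, the identity endomorphism of the complex $\sE^*$ in each degree: it is a globally defined, $d$-closed section of $\sD^0_\K(X,\sE^*)$ which is central in the sheaf of DG-Lie algebras $\sD^*_\K(X,\sE^*)$, so the corresponding constant element of $L^0$ is $d$-closed and commutes with all of $L$, hence lies in $N^0$. Under the isomorphism $\Ext^0_X(\sF,\sF)\xrightarrow{\sim}T^0_{(X,\sF)}$ its class corresponds to $[\Id_\sF]$, a generator of $\Ext^0_X(\sF,\sF)=\K$, so $[\iota]$ generates $H^0(L)$ and the required surjectivity follows. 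Lemma~\ref{lem.criteriouniversal} then yields pro-representability of $\Def_L\cong\Def_{(X,\sF)}$.

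The only point I expect to require care is the passage through the Thom-Whitney totalization: one must check that a central, $d$-closed global section of $\sD^*_\K(X,\sE^*)$ really does determine a central, $d$-closed element of $L=\Tot(\mathcal{U},\sD^*_\K(X,\sE^*))$, and that its image under the map $L\to\Tot(\mathcal{U},\HOM^*_{\Oh_X}(\sE^*,\sE^*))$ represents $\Id_\sF\in\Ext^0_X(\sF,\sF)$ through the connecting isomorphism of \eqref{equ.lungaipercoomologia}. Both facts follow by unwinding the definition of $\Tot$, using that its bracket and differential are computed componentwise on the \v{C}ech data and that $\Id_{\sE^*}$ is central and $d$-closed in each $\sD^*_\K(X,\sE^*)(U)$; there is nothing deep here, only bookkeeping, and the substantive content of the proposition is entirely concentrated in the two hypotheses feeding into Lemma~\ref{lem.criteriouniversal}.
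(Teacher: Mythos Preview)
Your proposal is correct and follows essentially the same route as the paper: reduce to Lemma~\ref{lem.criteriouniversal} for $L=\Tot(\mathcal{U},\sD^*_\K(X,\sE^*))$, use \eqref{equ.lungaipercoomologia} together with the hypotheses to see that $H^0(L)=T^0_{(X,\sF)}$ is one-dimensional, and observe that the identity on $\sE^*$ gives a central closed element of $L^0$ generating $H^0(L)$. Your write-up is in fact slightly more explicit than the paper's, in that you spell out the finite-dimensionality of $H^1(L)$ and the centrality check for $\iota$; the paper leaves these implicit.
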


\begin{proof} Let $\sE^*\to \sF$  be a  finite locally free resolution of $\sF$. According to 
\eqref{equ.lungaipercoomologia} the vector space $T^0_{(X,\sF)}$ is one-dimensional generated by the identity on the complex
$\sE^*$. Therefore, for every open affine cover 
$\sU$ the vector space 
$H^0(\Tot(\mathcal{U},\sD^*_\K(X,\sE^*)))$ is one-dimensional and generated by the identity on 
$\sE^*_{|U_{i_0\cdots i_k}}$ for every $k\ge 0$ and every $i_0,\ldots,i_k$.
In particular the DG-Lie algebra $\Tot(\mathcal{U},\sD^*_\K(X,\sE^*))$ satisfies the condition of Lemma~\ref{lem.criteriouniversal} and the conclusion follows by Theorem~\ref{thereom D1 controlla def coppia X F}.
\end{proof}

Let  $X$ be a projective smooth variety, $ \sF$  a coherent sheaf on $X$ and
$\sE^* \to \sF$ a finite locally free resolution. By definition  $ \det \sF =\det \sE^*$, and we have defined a morphism of sheaves of DG-Lie algebras (see Equation \eqref{equ.traccia alldegree}):
\[ 
\Tr\colon \sD_\K^*(X,\sE^*)\to \sD_\K(X,\det\sF)\,.
\]
By Theorem \ref{thereom D1 controlla def coppia X F}, the sheaf $\sD_\K^*(X,\sE^*)$ controls the deformations of the pair $(X, \sF)$, while the sheaf $\sD_\K(X,\det \sF)$ controls the deformations of the pair $(X,  \det\sF)$.

We can interpret this morphism as a natural transformation of deformation functors.
Some deformations of the pair $(X,\sF)$ corresponds to a deformation  of $X$ and of the complex  
$\sE^*$; then with these deformations we can easily associate a deformation of the pair $(X,\det \sF=\det \sE^*) $, considering
the determinant of the deformed complex. 
In general, not all the deformations of $\sF$ corresponds to a deformations of the complex, but this is true only locally on $X$
and the deformations of the complex glues only in cohomology. Anyway, we can still define a deformation of the pair 
$(X,\det\sF)$, as we view in the following lemma.

 \begin{lemma}
Let  $X$ be a projective smooth variety and  $ \sF$  a coherent sheaf on $X$. Then, there exists a commutative diagram
of deformation functors
\begin{center}
$\xymatrix{ \Def_{(X, \sF )}\ar[rr]^{\Tr} \ar[dr] & &\Def_{(X, \det \sF )} \ar[ld]   \\
            &    \Def_{X }&\\ }$
\end{center}
where the diagonal arrows are the forgetful natural transformations, 
and the natural transformation $\Tr$ is induced by the
trace map  $\Tr\colon \sD^*_{\K}(X,\sE^*)  \to \sD^*_{\K}(X, \det \sF)$, for any finite locally free resolution $\sE^* \to \sF$.

\end{lemma}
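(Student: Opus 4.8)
The plan is to obtain the triangle by applying, in turn, the functors $\Tot(\mathcal{U},-)$ and $\Def$ to the commutative diagram of sheaves of DG-Lie algebras supplied by Theorem~\ref{thm.tracediagram}. Fix a finite locally free resolution $\sE^*\to\sF$, so that $\det\sF=\det\sE^*$ is invertible, and an affine open cover $\mathcal{U}$ of $X$. Dropping the ambient exact sequences, Theorem~\ref{thm.tracediagram} gives the commutative triangle of coherent sheaves of DG-Lie algebras
\[\xymatrix{\sD^*_\K(X,\sE^*)\ar[rr]^{\Tr}\ar[dr]_{\alpha}&&\sD_\K(X,\det\sF)\ar[dl]^{\alpha}\\&\Theta_X&}\]
in which the slanted maps are the anchor maps (in the outer square of that theorem the anchors commute, being the identity on $\Theta_X$). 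By Theorem~\ref{thereom D1 controlla def coppia X F} the sheaf $\sD^*_\K(X,\sE^*)$ controls $\Def_{(X,\sF)}$. Since $\det\sF$ is locally free every infinitesimal deformation of it is locally trivial, so $\Def^{\text{ lt}}_{(X,\det\sF)}=\Def_{(X,\det\sF)}$, and Theorem~\ref{thereom D1 controlla def coppia loc trivial } shows that $\sD_\K(X,\det\sF)$ controls $\Def_{(X,\det\sF)}$; finally $\Theta_X$ controls $\Def_X$, a classical fact (Kodaira--Spencer), using that $X$ is smooth so that all deformations of $X$ are locally trivial.

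Next I would invoke functoriality. The assignment $\sL\mapsto\Tot(\mathcal{U},\sL)$ is a functor from coherent sheaves of DG-Lie algebras to DG-Lie algebras --- it is the composition of the \v{C}ech semicosimplicial construction with the Thom--Whitney totalization --- and $L\mapsto\Def_L$ is a functor from DG-Lie algebras to functors of Artin rings (Remark~\ref{rem.basictheorem} and Section~\ref{sec.review}). Applying these two functors in turn to the triangle above, and identifying the three vertices with $\Def_{(X,\sF)}$, $\Def_{(X,\det\sF)}$ and $\Def_X$ via Definition~\ref{definition fascio L controlla} and the controlling statements just recalled, produces a commutative triangle of deformation functors. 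It then remains to recognise the two slanted arrows as the forgetful transformations: this is a matter of unwinding the explicit cocycle descriptions used in the proofs of Theorems~\ref{thereom D1 controlla def coppia loc trivial } and~\ref{thereom D1 controlla def coppia X F}, where one sees that the transformation induced by an anchor map $\sD^*_\K(X,\sE^*)\to\Theta_X$ (resp. $\sD_\K(X,\det\sF)\to\Theta_X$) sends the cocycle $(l,m)$ describing a deformation of the pair to the cocycle obtained by erasing the endomorphism/automorphism components, i.e. to the underlying deformation of $X$. The same unwinding, using \eqref{equ.traccia alldegree}, the definition~\eqref{equ.transposition} of the transpose and the maps $\Phi,\Psi$ of Lemma~\ref{lem.tensor_e_hom}, identifies the horizontal arrow with the operation sketched informally before the statement: take the determinant of the complex deformed locally on $\mathcal{U}$, and glue the determinants in cohomology.

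The one point that needs genuine care is the independence of the whole construction on the resolution $\sE^*$, which is what legitimises the notation $\Tr$. Given a second finite locally free resolution $\sG^*\to\sF$, one has canonical identifications $\det\sE^*\cong\det\sF\cong\det\sG^*$, and Lemma~\ref{lemma D1 di risoluzioni quasi iso} connects $\sD^*_\K(X,\sE^*)$ and $\sD^*_\K(X,\sG^*)$ by a zig-zag of quasi-isomorphisms of coherent sheaves of DG-Lie algebras factoring through a common resolution $\sH^*$, via the auxiliary sheaves $\sD^*_\K(X,\sE^*\xrightarrow{f}\sH^*)$ and $\sD^*_\K(X,\sG^*\xrightarrow{g}\sH^*)$. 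One then checks that the trace morphisms of Theorem~\ref{thm.tracediagram} are compatible with the two projections out of each such auxiliary sheaf, so that the traces assemble into a commutative ladder over the whole zig-zag; applying $\Tot(\mathcal{U},-)$ and $\Def$ yields that the induced natural transformation $\Def_{(X,\sF)}\to\Def_{(X,\det\sF)}$ does not depend on the resolution. I expect this compatibility --- verifying that $\Tr$ commutes with the restriction morphisms $\sD^*_\K(X,\sE^*\xrightarrow{f}\sH^*)\to\sD^*_\K(X,\sE^*)$ and $\sD^*_\K(X,\sE^*\xrightarrow{f}\sH^*)\to\sD^*_\K(X,\sH^*)$ --- to be the main, though essentially routine, obstacle; the rest of the argument is formal.
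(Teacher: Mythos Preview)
Your approach is correct and somewhat more abstract than the paper's. The paper works directly at the level of explicit cocycles: it takes $(l,m)\in Z^1_{\rm sc}(\exp(\sD^*_\K(X,\sE^*)(\mathcal{U})))(A)$, applies $\Tr$ componentwise to obtain $(0,\Tr(m))$, and verifies by hand that this is again a cocycle---the key step being that the ``up to homotopy'' cocycle condition $u_{jk}\bullet(-u_{ik})\bullet u_{ij}=[d+l_j,n_{ijk}]$ becomes strict after applying $\Tr$, since $\Tr$ annihilates commutators. It then repeats the same check for the equivalence relation. Your route instead pushes the triangle of sheaves of DG-Lie algebras through the functors $\Tot(\mathcal{U},-)$ and $\Def$, which is cleaner but tacitly uses naturality of the isomorphism $\Def_{\Tot(\g^\Delta)}\cong H^1_{\rm sc}(\exp\g^\Delta)$ in $\g^\Delta$; this is true (it is implicit in \cite{FIM}) but you should state it. What the paper's computation buys is that one sees concretely \emph{why} the homotopy term disappears, and hence why the induced deformation of $(X,\det\sF)$ is genuinely locally trivial rather than merely ``locally trivial up to homotopy''. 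Your added paragraph on independence of the resolution is a welcome supplement---the paper does not argue it here---though note that, as literally stated, the lemma only claims the construction for a fixed $\sE^*$.
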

 
  \begin{proof}
Let   $\mathcal{U}=\{U_i\}$ be an open affine cover  for $X$, such that every sheaf  $\sE^k$ is free on every $U_i$. 
Fix $A \in \Art_\K$ and  consider a deformation $(X_A, \sF_A) \in  \Def_{(X, \sF )}(A)  $.    According to Theorem~\ref{thereom D1 controlla def coppia X F},
this deformation corresponds to an element   $(l,m) \in Z^1_{\rm sc} (\exp (\sD^*_\K(X,\sE^*) (\mathcal{U})))(A)$, where  
  $l=\{l_i\} \in \prod _i \HOM^1_{\Oh_X}( \sE^*,\sE^*) (U_i) \otimes \mathfrak m_A  
$ and $m = \{(h_{ij}, u_{ij})\}  \in \prod_{i,j}  \sD^0_\K(X,\sE^*)(U_{ij})\otimes \mathfrak m_A $. In particular, 
for any $i,j$ and $k$, we have the following conditions:
\[
u_{jk} |_{U_{ijk}} \bullet - u_{ik}|_{U_{ijk}} \bullet u_{ij}|_{U_{ijk}} =
[d + l_j |_{U_{ijk}} , n_{ijk}],
\]
for some $n=\{n_{ijk}\}  \in \prod_{i,j,k}  \HOM^{-1}_{\Oh_X}( \sE^*,\sE^*)  (U_{ijk}) \otimes \mathfrak m_A $.
Applying the trace morphisms to $m$, for any $i$ and $j$, we have
\[
\Tr(h_{ij}, u_{ij})=(h_{ij}, \Tr(u_{ij})) \in \sD^0_\K(X,\det\sF)(U_{ij})\otimes \mathfrak m_A 
\]
such that
\[
\Tr(u_{jk} |_{U_{ijk}}) \bullet - \Tr(u_{ik}|_{U_{ijk}}) \bullet 
\Tr(u_{ij}|_{U_{ijk}}) =\Tr([d + l_j |_{U_{ijk}} , n_{ijk}])=0.
\]
This implies that the element $\Tr(m)=\{(h_{ij}, \Tr(u_{ij})\} \in \prod_{i,j} \sD^0_\K(X,\det\sF)(U_{ij})\otimes \mathfrak m_A $
satisfies the cocycle condition on triple intersections and so, by Theorem \ref{thereom D1 controlla def coppia loc trivial }, it 
 defines a deformation of the pair $(X, \det \sF)$ over $A$.

An analogous computation shows that the isomorphism class of the induced deformation of $(X, \det \sF)$ does not depend on the isomorphism class of the deformation of $(X, \sF)$.
Indeed, let $(X_A, \sF_A)$ and $(X'_A, \sF'_A)$ be isomorphic deformations, corresponding to the elements 
$(l_0=\{l_{0,i}\} ,m_0=   \{(h_{0,ij}, u_{0,ij})\} )$ and $(l_1=\{l_{1,i}\} ,m_1=   \{(h_{1,ij}, u_{1,ij})\} )  \in Z^1_{\rm sc}  (\exp
(\sD^*_\K(X,\sE^*) (\mathcal{U})))(A)$. According to Theorem \ref{thereom D1 controlla def coppia X F}, the isomorphism corresponds
to the existence of  
\[a =\{g_{i}, v_i\} \in  \prod_{i}  \sD^0_{\K}(X,\sE^*)(U_{i})\otimes {\mathfrak m}_A\quad\text{and}\quad b= \{b_{ij}\} \in \prod_{i,j}  \HOM^{-1}_{\Oh_X}( \sE^*,\sE^*)  (U_{ij}) \otimes \mathfrak m_A\] 
such that
\[
\begin{cases}
e^a * l_0=l_1\\
- m_0\bullet -{a_i}_{|U_{ij}} \bullet m_1
\bullet {a_j}_{|U_{ij}} =db+[{l_{0,j}}_{|U_{ij}} ,b].
\end{cases}
\] 
In particular, applying the trace 
morphisms  for any $i$ and $j$, we have
\[
-\Tr (m_0)\bullet -\Tr ({a_i}_{|U_{ij}}) \bullet \Tr( m_1)
\bullet \Tr ({a_j}_{|U_{ij}} )=0
\in \sD^0_\K(X,\det\sF)(U_{ij})\otimes \mathfrak m_A\,. 
\]
By  Theorem \ref{thereom D1 controlla def coppia loc trivial }, this implies that the induced deformations of  $(X, \det \sF)$ are isomorphic.
 \end{proof}
  
\begin{remark}
As a particular case, if we only consider  the deformations of a coherent sheaf $ \sF$ on a fixed projective variety
$X$, then trace induces  a morphism of functors  
\[ \Def_{\sF } \to \Def_{\det \sF }.
\]
In this case, the induced morphisms in cohomology are the ones already analysed in \cite[Theorem 3.23]{casson}.
\end{remark} 

\begin{theorem} \label{teo finale su traccia}
Let  $X$ be a projective smooth variety and  $\sE^*$ a locally free finite resolution of a coherent sheaf $\sF$.   Consider the linear maps $\Tr^i\colon  \Ext_X^i (\sF,\sF) \to H^i(X,\Oh_X)$ induced by the 
morphism of sheaves of DG-Lie algebras 
$\Tr\colon \HOM^*_{\Oh_X}(\sE^*,\sE^*)\ \to \Oh_X$.\par

If the map 
$\Tr^1$ is surjective and the map $\Tr^2$ is injective, then the induced natural transformation  $\Def_{(X,\sF)}\to \Def_{(X,\det \sF)}$ is smooth. If $\Tr^2$ is injective and $\Def_{(X,\det \sF)}$ is unobstructed, then $\Def_{(X,\sF)}$ is unobstructed.
\end{theorem}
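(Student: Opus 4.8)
The strategy is to realize $\Def_{(X,\sF)}\to\Def_{(X,\det\sF)}$ as the transformation induced by a morphism of DG-Lie algebras, to read off the induced maps on tangent and obstruction spaces from the trace diagram of Theorem~\ref{thm.tracediagram}, and then to finish with the standard smoothness and obstruction criteria for deformation functors.

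First I would fix an affine open cover $\mathcal{U}$ of $X$ and set $L=\Tot(\mathcal{U},\sD^*_\K(X,\sE^*))$, $M=\Tot(\mathcal{U},\sD_\K(X,\det\sF))$. By Theorem~\ref{thereom D1 controlla def coppia X F} the DG-Lie algebra $L$ controls $\Def_{(X,\sF)}$, and $M$ controls $\Def_{(X,\det\sF)}$; moreover the lemma established just above identifies $\Def_{(X,\sF)}\to\Def_{(X,\det\sF)}$ with the transformation induced by $\Tot$ applied to the morphism of sheaves of DG-Lie algebras $\Tr\colon\sD^*_\K(X,\sE^*)\to\sD_\K(X,\det\sF)$ of \eqref{equ.traccia alldegree}. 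By \eqref{eq. comologia tot is hypercoho} and Definition~\ref{definizione T i} one has $H^i(L)=T^i_{(X,\sF)}$, while $H^i(M)=\H^i(X,\sD_\K(X,\det\sF))$, which I denote $T^i_{(X,\det\sF)}$: it is the tangent space for $i=1$ and an obstruction space for $i=2$ of $\Def_{(X,\det\sF)}$.

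Next I would apply hypercohomology to the commutative diagram of short exact sequences of coherent sheaves of DG-Lie algebras of Theorem~\ref{thm.tracediagram} (both rows are short exact because every $\sE^j$ is locally free and $\det\sF$ is a line bundle, so both anchor maps are surjective by Lemma~\ref{lem.liftingproperty}). Using $\H^i(X,\HOM^*_{\Oh_X}(\sE^*,\sE^*))=\Ext^i_X(\sF,\sF)$, $\H^i(X,\Oh_X)=H^i(X,\Oh_X)$, and naturality of the connecting homomorphisms, one obtains a commutative ladder of long exact sequences, a typical stretch of which reads
\[
\xymatrix@C=14pt{
\Ext^i_X(\sF,\sF)\ar[r]\ar[d]^{\Tr^i}&T^i_{(X,\sF)}\ar[r]\ar[d]&H^i(X,\Theta_X)\ar[r]\ar@{=}[d]&\Ext^{i+1}_X(\sF,\sF)\ar[d]^{\Tr^{i+1}}\\
H^i(X,\Oh_X)\ar[r]&T^i_{(X,\det\sF)}\ar[r]&H^i(X,\Theta_X)\ar[r]&H^{i+1}(X,\Oh_X)
}
\]
in which the vertical maps on the $\Ext$-terms are the trace maps $\Tr^i$ and those on the $H^*(X,\Theta_X)$-terms are identities. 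A routine diagram chase (a version of the four lemma) then shows that injectivity of $\Tr^2$ alone forces the middle map $T^2_{(X,\sF)}\to T^2_{(X,\det\sF)}$ to be injective, and that, if in addition $\Tr^1$ is surjective, then $T^1_{(X,\sF)}\to T^1_{(X,\det\sF)}$ is surjective.

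Finally I would feed this into two standard facts about deformation functors of DG-Lie algebras in characteristic zero. For the first assertion I would invoke the \emph{standard smoothness criterion}: a morphism $\chi\colon L\to M$ of DG-Lie algebras with $H^1(\chi)$ surjective and $H^2(\chi)$ injective induces a smooth transformation $\Def_L\to\Def_M$ (see \cite{ManettiSemireg,ManettiSeattle}); applied to $\chi=\Tot(\Tr)$ together with the previous paragraph, this yields the smoothness of $\Def_{(X,\sF)}\to\Def_{(X,\det\sF)}$. For the second assertion I would use that the (primary) obstruction maps of $\Def_L$ and $\Def_M$ are compatible with $H^2(\chi)$: for a small extension $B\twoheadrightarrow A$ with kernel $I$ and $\xi\in\Def_L(A)$, the obstruction $o_L(\xi)\in H^2(L)\otimes I$ is carried by $H^2(\chi)\otimes\Id_I$ to the obstruction to lifting the image of $\xi$ in $\Def_M(A)$, which vanishes because $\Def_M=\Def_{(X,\det\sF)}$ is unobstructed; as $H^2(\chi)$ is injective (for which, as noted, only $\Tr^2$ injective is needed), $o_L(\xi)=0$, so $\xi$ lifts and $\Def_{(X,\sF)}$ is unobstructed. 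The only steps demanding genuine care are the identification of the connecting homomorphisms in the hypercohomology ladder with the classical trace maps $\Tr^i$ — which is formal once hypercohomology is applied to the diagram of Theorem~\ref{thm.tracediagram} — and keeping track of which hypothesis is used in each half of the chase; everything else is routine.
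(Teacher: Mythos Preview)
Your proposal is correct and follows essentially the same route as the paper: both arguments pass from the diagram of Theorem~\ref{thm.tracediagram} to the induced maps on $T^i$, then invoke the Standard Smoothness Criterion and the compatibility of obstructions with $H^2(\chi)$. The only cosmetic difference is that the paper reads off the needed surjectivity/injectivity directly from the common kernel $\HOM^*_{\Oh_X}(\sE^*,\sE^*)_0$ of the two vertical trace maps, whereas you run an explicit four-lemma chase on the hypercohomology ladder; these are the same argument in different clothing.
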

 
By the exact  sequence \eqref{equ.ext0}, the above assumptions on $\Tr^1$ and $\Tr^2$ are equivalent to assuming  $\Ext_X^2(\sF,\sF)_0=0$.  
 
 \begin{proof}
By Theorem \ref{thm.tracediagram},   there exists a commutative diagram with exact rows and columns of morphisms of coherent sheaves of DG-Lie algebras
\[ \xymatrix{&0\ar[d]&0\ar[d]&&\\
&\HOM^*_{\Oh_X}(\sE^*,\sE^*)_0\ar@{=}[r]\ar[d]&\HOM^*_{\Oh_X}(\sE^*,\sE^*)_0\ar[d]&&\\
0\ar[r]&\HOM^*_{\Oh_X}(\sE^*,\sE^*)\ar[d]^{\Tr}\ar[r]&\sD^*_\K(X,\sE^*)\ar[d]^{\Tr}\ar[r]^-{\alpha}&\Theta_{X}\ar[r]\ar@{=}[d]&0\;\;\\
0\ar[r]&\Oh_X\ar[d]\ar[r]&\sD_\K(X,\det \sE^*)\ar[d]\ar[r]^-{\alpha}&\Theta_{X}\ar[r]&0\\
&0& \ \ 0\;.&&}\]
The hypothesis on the morphisms $\Tr\colon \HOM^*_{\Oh_X}(\sE^*,\sE^*)\ \to \Oh_X$ implies that the same holds for the morphisms induced by $\Tr\colon \sD_\K^*(X,\sE^*)\to \sD_\K(X,\det\sF)$.
Then, it is enough to apply the Standard Smoothness Criterion  \cite[Theorem 4.11]{ManettiSeattle} to the corresponding  morphism of functors $\Tr\colon\Def_{(X,\sF)}\to \Def_{(X,\det \sF)}$. 

If $\Def_{(X,\det \sF)}$ is unobstructed, then $\Tr^2\colon T^2_{(X,\sF)}\to T^2_{(X,\det \sF)}$ annihilates all the obstructions and its injectivity implies that also $\Def_{(X,\sF)}$ is unobstructed.
 \end{proof}

\begin{acknowledgement}
D.I. wish
to thank the Dipartimento di Matematica  \lq\lq G. Castelnuovo\rq\rq of
Sapienza Universit\`a di Roma for the hospitality during the preparation of this paper. 
D.I. acknowledges the  support of Fondi di Ateneo dell'Universit\`a di Bari; 
M.M. acknowledges the    support by Italian MIUR under PRIN project 2015ZWST2C \lq\lq Moduli spaces and Lie theory\rq\rq. 
We wish to thank the referees for useful comments and for suggestions improving the presentation of the paper.

 \end{acknowledgement}

\end{document}